\newtheorem{Teo}{Theorem}[section]
\newtheorem*{theorem*}{Theorem}
\newtheorem{Lem}[Teo]{Lemma}
\newtheorem{Obs}[Teo]{Remark}
\newtheorem{Def}[Teo]{Definition}
\newtheorem{Cor}[Teo]{Corollary}
\newcommand{\VR}{\mathcal{O}}
\newtheorem{Con}[Teo]{Conjecture}
\newtheorem{lem-def}[Teo]{Lemma-Definition}
\DeclareRobustCommand\longtwoheadrightarrow
\newcommand{\hooklongrightarrow}{\lhook\joinrel\longrightarrow}
\renewenvironment{proof}{{\bfseries Proof.}}{\qed}
\newcommand{\Ha}{\mathbb H}
\newcommand{\R}{\mathbb R}
\newcommand{\N}{\mathbb N}
\newcommand{\Q}{\mathbb Q}
\newcommand{\F}{\mathbb F}
\def\op{\operatorname}
\def\al{\alpha}
\def\ars#1{\renewcommand\arraystretch{#1}}
\def\as{\op{AS}}
\def\aut{\op{Aut}}
\def\be{\beta}
\def\comb#1#2{\ars{0.9}\left(\!\!\begin{array}{c}
#1\\#2
\end{array}\!\!\right)\ars{1}}
\def\dep{\op{\mbox{\rm\small depth}}}
\def\deta{\deg_\eta}
\def\diso{\lower.4ex\hbox{$\downarrow$}\raise.4ex\hbox{\mbox{\scriptsize
$\wr$}}}
\def\dta{\delta}
\def\e{\medskip}
\def\ep#1{\exp(\Pi i#1)}
\def\ep{\epsilon}
\def\g{\Gamma}
\def\ga{\gamma}
\def\gal{\op{Gal}}
\def\gen#1{\big\langle\, {#1} \,\big\rangle}
\def\gi{\g_{\infty}}
\def\imp{\ \Longrightarrow\ }
\def\ism{\lower.3ex\hbox{\ars{.08}$\begin{array}{c}\,\to\\\mbox{\tiny $\sim\,$}\end{array}$}}
\def\iso{\ \lower.3ex\hbox{\ars{.08}$\begin{array}{c}\lra\\\mbox{\tiny $\sim\,$}\end{array}$}\ }
\def\ka{\kappa}
\def\kb{\overline{K}}
\def\kbx{\overline{K}[x]}
\def\ks{K^{\op{sep}}}
\def\kx{K[x]}
\def\la{\lambda}
\def\lg{l\raise.6ex\hbox to.2em{\hss.\hss}l}
\def\lm{\op{Lim}}
\def\lra{\,\longrightarrow\,}
\def\om{\omega}
\def\orb{\hbox to  .3em{$\backslash$}\backslash}
\def\ord{\op{ord}}
\def\Ri{\R_\infty}
\def\sg{\sigma}
\def\sii{\ \Longleftrightarrow\ }
\def\sub{\subseteq}
\def\supp{\op{supp}}
\def\t{\theta}
\def\tr{\op{trn}}
\def\Tr{\op{Tr}}
\def\vt{v_\theta}
\def\z{\op{Z}}
\newcounter{cs}
\newcommand{\casos}{\begin{itemize}}
\newcommand{\fcasos}{\end{itemize}\setcounter{cs}{1}}
\newfont{\tit}{cmr12 scaled \magstep3}
\title{Depth of Artin-Schreier defect towers}
\subjclass[2010]{Primary 13A18; Secondary 12J20, 13J10, 14E15}
\author[Nart]{Enric Nart}
\address{Departament de Matem\`{a}tiques,         Universitat Aut\`{o}noma de Barcelona,         Edifici C, E-08193 Bellaterra, Barcelona, Catalonia}
\email{enric.nart@uab.cat}
\author[Novacoski]{Josnei Novacoski}
\address{Departamento de Matem\'{a}tica,         Universidade Federal de S\~ao Carlos, Rod. Washington Luís, 235, 13565--905, S\~ao Carlos -SP, Brazil}
\email{josnei@ufscar.br}
\thanks{Partially supported by grant PID2020-116542GB-I00  funded by the Spanish MCIN/AEI. During the realization of this project the second author was supported by a grant from Funda\c{c}\~ao de Amparo \`a Pesquisa do Estado de S\~ao Paulo (process number 2024/08989-6) and a grant from Conselho Nacional de Desenvolvimento Cient\'ifico e Tecnol\'ogico (process number 303215/2022-4).}
\keywords{Artin-Schreier extension, defect, depth, Henselian field, Okutsu sequence, valuation}
\begin{document}
\subjclass[2010]{13A18 (12J10)}

\begin{abstract}
The depth of a simple algebraic extension $(L/K,v)$ of valued fields is the minimal length of the Mac Lane-Vaquié chains of the valuations on $\kx$ determined by the choice of different generators of the extension. In a previous paper, we characterized the defectless unibranched extensions of depth one. In this paper, we analyze this problem for towers of Artin-Schreier defect extensions. Under certain conditions on $(K,v)$, we prove that the towers obtained as the compositum of linearly disjoint defect Artin-Schreier extensions of $K$ have depth one. We conjecture that these are the only depth one Artin-Schreier defect towers and we present some examples supporting this conjecture. 
\end{abstract}

\maketitle



\section*{Introduction}
Let $K$ be a field, $\kb$ an algebraic closure of $K$, and $v$ a valuation on $\kb$ such that  $(K,v)$ is Henselian. For any subfield $K\sub L\sub \kb$, we denote the \textit{value group} and \textit{residue field} of $(L,v)$ by $vL$ and $Lv$, respectively. 

For $\t\in\overline{K}$ we consider the valuation $v_\theta$ on $K[x]$ defined as
\[
\vt(f)=v(f(\theta)).
\]
The length  of the \emph{Mac Lane-Vaqui\'e (MLV) chains} of $\vt$  is said to be the \textbf{depth} of  $\vt$,  denoted by $\dep(\t)$ (see Section \ref{secOS} for more details). This concept is not intrinsically associated to $(L/K,v)$. Different  generators of the same extension may have different depths. This leads to defining the depth of an extension of valued fields as
\[
\dep(L/K,v):=\min\{\dep(\t)\mid L=K(\t)\}.
\]

In \cite{OS}, a tight link was established between MLV chains of $\vt$ and \emph{Okutsu sequences} of $\t$. These ideas were used in \cite{NN25} to show that Okutsu sequences  facilitate the computation of the depth of generators of extensions of valued fields. 
In particular, this tool was used to compute the depth  of several defectless extensions and to characterize the defectless extensions with depth equal to one.

The aim of this paper is to use Okutsu sequences to study $\dep(L/K,v)$ for extensions with defect. 
A motivation to study the depth of defect extensions comes from the study of the module $\Omega$ of K\"ahler differentials for the extension $\VR_L/\VR_K$ of the corresponding valuation rings. By the results of \cite{NS2026}, if $\dep(L/K,v)=1$, then   $\Omega$ has a very explicit structure. A similar result appears in \cite[Theorem 1.1]{CKR}. This result gives explicit formulas for $\Omega$, obtained from a suitable set of generators of $\VR_L/\VR_K$. It is easy to show that if $(L/K,v)$ is a defect extension of depth one, then such set of generators exists.

Hence, it is important to find criteria for the extension $(L/K,v)$ to have depth one. If the extension has defect, then the depth-one property amounts to the existence of some  $\t\in L$ such that $L=K(\t)$ and $\vt$ admits an MLV chain
\[
v\,\lra\,\mu_0\,\lra\,\vt, 
\]  
where $\mu_0\to\vt$ is  a \emph{limit augmentation}. In the language of \cite{CKR}, this is the same as saying that $(L/K,v)$ is \emph{pure} in $\theta$.

The outline of the  paper is as follows. In Section \ref{secOS}, we recall Okutsu sequences of algebraic elements $\t\in\kb$  and its connexion with the depth of $\vt$. In Section \ref{secAS}, we focus on the behaviour of the depth on towers of Artin-Schreier (abbreviated as AS) defect extensions . Namely, we consider a tower
\[
K= K_0\subseteq\ldots \subseteq K_n=M
\] 
where $K_{i}/K_{i-1}$ is an AS extensions for every $i$, $1\leq i\leq n$. This is again motivated by \cite{CKR}. The main result of that section (Theorem \ref{generaliza}) provides a condition for the compositum of finitely many AS defect extensions to have depth one. In the rank one and Henselian case (and under a mild extra condition), this shows that the compositum of finitely many \emph{independent} AS defect extensions has depth one.  We also present a criterion (Lemma \ref{criterion}) for a tower of two AS extensions (not necessarily with defect) to be Galois.

In Section \ref{subsethanh}, we focus on the case where $K$ is a subfield of the Hahn field $\mathbb H$. This situation provides many tools that simplify the study of the behaviour of the depth on towers of AS defect extensions. We conjecture (Conjecture \ref{mainCon}) that for a subfield $K$ of $\mathbb H$ which is Henselian, perfect and \emph{has no finite limits}, the only towers of AS defect extensions of depth one are the compositum of finitely many AS defect extensions.

In Section \ref{examples} we present some examples supporting Conjecture \ref{mainCon}. The example built in Section \ref{subsecEx2} consists of a tower $M/K$ of two AS defect extensions such that $M/K$ is Galois, ${\rm Gal}(M/K)$ is cyclic and $\dep(M/K)=2$. In Section \ref{subsecEx3} we present a similar example for which $M/K$ is non-Galois. Finally, in Section \ref{Exemp3} we present an example of a tower $N/K$ of three AS defect extensions, which is Galois, its Galois group is not abelian and $\dep(N/K)=2$.

\section{Okutsu sequences and depth}\label{secOS}
In this section, we do not make any assumption on the valued field $(K,v)$. Let us still denote by $v$ some fixed extension of $v$ to an algebraic closure $\kb$ of $K$. Denote $\g:=v\kb$.
From now on, we denote $\g\cup\{\infty\}$ simply by $\gi$.

For some $\t\in\kb$, let  $g\in\kx$ be its minimal  polynomial over $K$ and consider the extension $L=K(\t)$ of $K$. S. Mac Lane realized that the properties of the valued field $(L,v)$  could be described in terms of the following valuation on $\kx$:
\[
\vt\colon \kx\lra \g\cup\{\infty\},\qquad f\longmapsto \vt(f)=v(f(\t)).
\]  
Consider  the isomorphism $\kx/(g)\simeq L $ induced by $x\mapsto \t$. Since $\vt^{-1}(\infty)=g\kx$, the valuations  $\vt$ and $v_{\mid L}$ are determined one by each other through
\[
\vt\colon \kx \longtwoheadrightarrow \kx/(g)\stackrel{\sim}\lra L\stackrel{v}\lra \g\cup\{\infty\}.
\]

A celebrated theorem of Mac Lane-Vaqui\'e states that $\vt$ can be constructed from $v$ by means of an \emph{MLV chain}; that is, a finite sequence of \textit{augmentations} of valuations on $\kx$ whose restriction to $K$ is $v$:
\[
v\ \lra\ 	\mu_0\ \lra\  \mu_1\ \lra\ \cdots
	\ \lra\ \mu_{r}=\vt,
\]
satisfying certain natural properties \cite{Vaq, MLV}. The initial augmentation $v\to\mu_0$ is symbolic; it only indicates that $\mu_0$ is a \emph{monomial} valuation. Each of the real augmentations $\mu_n\to\mu_{n+1}$ can be either \emph{ordinary} or \emph{limit}. 
The valuation $\vt$ admits different MLV chains, but all of them have the same length $r$ and the same sequence of characters ordinary/limit of the successive augmentations \cite[Section 4]{MLV}.


Let $n\ge1$ be the degree of $\t$ over $K$. 
For every integer $1\le m\le n$, we define the \textbf{set of distances} of $\t$ to elements in $\kb$ of degree $m$ over $K$ as:
\[
D_m=D_m(\t,K):=\left\{v(\t-b)\mid b\in\kb,\ \deg_Kb=m\right\}\sub\gi.
\] 
Note that $\max(D_n)=\infty$.

The set $D_1$ has been extensively studied by Blaszczok and Kuhlmann for its connexions with defect and immediate extensions \cite{B,Kuhl}.

We are interested in $\max\left(D_m\right)$, the maximal distance of $\t$ to elements of a fixed degree over $K$. Since this maximal distance may not exist, we follow \cite{Kuhl} and we replace this concept with an analogous one in the context of cuts in $\g$.

A \textbf{cut} in $\g$ is a pair $\dta=(\dta^L,\dta^R)$ of subsets of $\g$ such that $$\dta^L< \dta^R\quad\mbox{ and }\quad \dta^L\cup \dta^R=\g.$$ 
The inequality $\dta^L< \dta^R$ means that $\al<\be$ for all elements $\al\in \dta^L$, $\be\in \dta^R$. Note that 
 $\dta^R=\g\setminus\dta^L$. Let us denote by $\op{Cuts}(\g)$ the set of all cuts in $\g$. 

For all $D\sub \g$ we denote by  $D^+$, $D^-$ the cuts determined by
\[
\dta^L=\{\ga\in \g\mid \exists \al\in D: \ga\leq \al\},\qquad \dta^L=\{\ga\in \g\mid \ga<D\},
\]
respectively.
If $D=\{\ga\}$, then we will write $\ga^+=(\g_{\le \ga},\g_{>\ga})$ instead of $\{\ga\}^+$ and $\ga^-=(\g_{<\ga}, \g_{\ge \ga})$ instead of $\{\ga\}^-$. These cuts are said to be 
\textbf{principal}.

The set $\op{Cuts}(\g)$  is totally ordered with respect to the following ordering:
\[
(\dta^L,\dta^R)\le (\ep^L,\ep^R)\ \sii\ \dta^L\sub \ep^L.
\]
The \textbf{improper} cuts 
$-\infty:=(\emptyset,\g)$, $\infty^-:=(\g,\emptyset)$ 
are the absolute minimal and maximal elements in $\op{Cuts}(\g)$, respectively.

\begin{Def}\label{distance}
For $m<n$, we define $d_m(\t)$ to be the cut $D_m^+\in \op{Cuts}(\g)$.  

We agree that $d_n(\t)$ is the improper cut $\infty^-$.
\end{Def}

For instance, if $D_m$ has a maximal element $\ga\in\g$, then  $d_m(\t)=\ga^+$.

As mentioned  above, the valuation $\vt$ on $\kx$ contains relevant information about the valued field $(L,v)$. This information can be captured as well by certain sequences of sets of algebraic elements. 

We say that a  subset $A\sub\kb$ has a \textbf{common degree} if all its elements have the same degree over $K$. In this case, we shall denote this common degree by $\deg_K A$. 

\begin{Def}\label{defOkS}
	An \textbf{Okutsu sequence} of $\t$ is a finite sequence 
\[
\left[A_0,A_1,\dots,A_{r-1},A_r=\{\t\}\right], 
\]
of common degree subsets of $\kb$ whose degrees grow strictly:
\begin{equation}\label{degOS}	
		1=m_0<m_1<\cdots<m_r=n,\qquad m_\ell=\deg_K A_\ell, \ \ 0\le\ell\le r,
\end{equation}
and	satisfy the following properties for all $\,0\le\ell< r$: \e
	
	(OS0) \ For all $b\in\kb$ such that $\deg_Kb<m_{\ell+1}$, we have $v(\t-b)\le  v(\t-a)$ for some $a\in A_\ell$.\e
	
	
	(OS1) \ $\#A_\ell=1$ whenever $\max\left(D_{m_{\ell}}\right)$ exists.\e
	
	(OS2) \ If $\max\left(D_{m_{\ell}}\right)$ does not exist, then we assume that $A_\ell$ is well-ordered with respect to the following ordering: \ $a<a'\ \sii \ v(\t-a)<v(\t-a')$.\e
	
	(OS3) \ For all $a\in A_{\ell}$, $b\in A_{\ell+1}$, we have $v(\t-a)<v(\t-b)$.
\end{Def}

Let us discuss the existence and construction of Okutsu sequences.
Consider the sequence of minimal degrees of the distances of $\t$:
\begin{equation}\label{degDist}	
1=d_0<d_1<\cdots<d_s=n,
\end{equation}
defined recursively as follows:

$\bullet$ \ $d_0=1$,

$\bullet$ \ for every $\ell\in\N$, $d_\ell$ is the least integer $m>d_{\ell-1}$ such that there exists some $\ep\in D_m$ satisfying $\ep>D_{d_{\ell-1}}$.\e

Now, for each $0\le \ell<  s$ choose subsets $A_\ell\sub\kb$ of common degree $\deg_K A_\ell=d_\ell$ such that
\[
 \{v(\t-a)\mid a\in A_\ell\}\sub D_{d_\ell}
\]
is a well-ordered cofinal subset of $D_{d_\ell}$. Also, if for some $\ell$ there exists $\ga=\max\left(D_{d_\ell}\right)$, then we take $A_\ell=\{a\}$, for some $a\in\kb$ such that $\deg_K a=d_\ell$ and $v(\t-a)=\ga$. 

Finally, for $\ell>0$, we consider in $A_\ell$ only elements $a$ such that $v(\t-a)>D_{d_\ell-1}$. 

Clearly, $\left[A_0,A_1,\dots,A_{r-1},A_r=\{\t\}\right]$ is an Okutsu sequence of $\t$ and, conversely, all Okutsu sequences arise in this way. 

In particular, the sequence (\ref{degOS}) of degrees over $K$ of the sets $A_\ell$ is equal to the canonical sequence (\ref{degDist}) of minimal degrees of the distances of $\t$. That is, $r=s$ and $d_\ell=m_\ell$ for all $0\le \ell<r$.

The link between $\dep(\t)$  and Okutsu sequences of $\t$ is established in
the following theorem, which  was proved in \cite{OS} under the assumption that $(K,v)$ is Henselian. For arbitrary valued fields, it was shown in  \cite{NN25} that this  result follows easily from \cite[Theorem 7.2]{NNP}. 

\begin{Teo}\label{OSdepth}
The length $r$ of any Okutsu sequence of $\t$ is equal to $\dep(\t)$. Moreover, for every MLV chain of $\vt$:
\[
v\ \to\ \mu_0\ \to\ \mu_1\ \to\ \cdots \ \to\ \mu_{r-1}\ \to\ \mu_r=\vt, 
\]
and every $0\le \ell<r$,
the augmentation $\mu_\ell\, \to\, \mu_{\ell+1}$ is ordinary if  and only if $D_{m_\ell}$ contains a maximal element.
\end{Teo}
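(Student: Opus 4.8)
The plan is to prove Theorem~\ref{OSdepth} by reducing both halves to the results already available: the relationship between MLV chains and Okutsu invariants from \cite{OS}, and its generalization to arbitrary valued fields via \cite[Theorem 7.2]{NNP} as recalled in \cite{NN25}. So I will not redo the structure theory of MLV chains from scratch; instead I will carefully match the two combinatorial invariants attached to $\vt$: the sequence of degrees $(m_0,\dots,m_r)$ coming from an Okutsu sequence, and the degrees of the key polynomials appearing along an MLV chain. The key conceptual point established before the statement is that the degree sequence $(\ref{degOS})$ of any Okutsu sequence coincides with the canonical sequence $(\ref{degDist})$ of minimal degrees of distances; hence it suffices to work with this canonical sequence, which is intrinsic to $\t$, and to show it has the same length $r$ and the same ordinary/limit pattern as any MLV chain of $\vt$.

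First I would recall from \cite{OS,NNP} that each real augmentation $\mu_\ell\to\mu_{\ell+1}$ in an MLV chain of $\vt$ is given by a key polynomial $\phi_{\ell+1}$ over $\mu_\ell$, and that $\deg\phi_{\ell+1}=m_{\ell+1}$ where $m_{\ell+1}$ is exactly the $(\ell+1)$-st term of the canonical degree sequence of $\t$; moreover $\deg\phi_r=n=\deg_K\t$, forcing $r=s$. This identification of $\deg\phi_{\ell}$ with $m_\ell$ is precisely the ``tight link'' cited in the introduction. With this in hand, the length statement $r=\dep(\t)$ is immediate: any MLV chain has length $s$, which equals the length of any Okutsu sequence, and $\dep(\t)$ is by definition the common length of all MLV chains of $\vt$.

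Next, for the ordinary-versus-limit dichotomy, I would use the characterization: the augmentation $\mu_\ell\to\mu_{\ell+1}$ is \emph{ordinary} exactly when $\phi_{\ell+1}$ can be chosen so that $\mu_{\ell+1}(\phi_{\ell+1})$ is the value $\mu_\ell(\phi_{\ell+1})+\gamma$ for a single jump $\gamma$ attained at finite level, whereas it is a \emph{limit} augmentation when the value must be obtained as a supremum over a continuous family of key polynomials of the same degree $m_\ell$ approaching $\t$ — i.e.\ when the set of values $v(\t-a)$ for $\deg_K a=m_\ell$ that exceed $D_{m_\ell-1}$ has no maximum. Translating: $D_{m_\ell}$ has a maximal element iff the optimal approximation to $\t$ by elements of degree $m_\ell$ is achieved (giving an ordinary augmentation), and $D_{m_\ell}$ fails to have a maximum iff one is in the limit situation. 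This is exactly the content of (OS1) and (OS2), and it is where the well-ordered cofinal set $A_\ell$ enters: its order type being $>1$ signals the limit case. I would make this precise by invoking \cite[Theorem 7.2]{NNP}, which phrases the ordinary/limit alternative in terms of whether a certain ``limit value'' is attained, and then observing that attainment is equivalent to $\max(D_{m_\ell})\in\g$.

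The main obstacle I anticipate is bookkeeping rather than conceptual: ensuring the indexing conventions align, in particular that the level-$\ell$ key polynomial in the MLV chain has degree $m_\ell$ (not $m_{\ell+1}$ or $m_{\ell-1}$), and correctly handling the initial symbolic augmentation $v\to\mu_0$ (which contributes $m_0=1$ but is not a ``real'' augmentation, so the ordinary/limit statement is only asserted for $0\le\ell<r$). I would also need to be careful that $\max(D_{m_\ell})$ existing, $\max(D_{m_\ell})$ lying in $\g$ rather than being $\infty$, and the cut $d_{m_\ell}(\t)=D_{m_\ell}^+$ being principal, are all the same condition for $\ell<r$ — this follows since $\max(D_m)=\infty$ only when $m=n$. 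Once these conventions are pinned down, both assertions follow directly from the cited structure theorems; no genuinely new argument is needed, which is why the proof will be short.
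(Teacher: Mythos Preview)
Your proposal is correct and takes essentially the same approach as the paper: the paper does not prove Theorem~\ref{OSdepth} at all but simply cites it, stating that it was proved in \cite{OS} under the Henselian assumption and that for arbitrary valued fields it follows easily from \cite[Theorem~7.2]{NNP} as shown in \cite{NN25}. Your outline is just a fleshed-out version of exactly that citation strategy, so there is nothing to add.
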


\section{Artin-Schreier defect extensions}\label{secAS}
Let $K$ be any field of positive characteristic. 
\subsection{Background on AS extensions}
In this section, we study Artin-Schreier extensions of $K$. We shall often abbreviate the qualifier ``Artin-Schreier" by AS.
Consider the $\F_p$-linear operator
\[
\as\colon \kb\; \longtwoheadrightarrow\; \kb,\qquad \as(\al)=\al^p-\al.
\]
Clearly, $\ker(\as)=\F_p$. In particular, if $\z(f)\sub \kb$ denotes  the set of zeros of a polynomial $f\in\kbx$, we have \[\z(x^p-x-\as(\al))=\al+\F_p,\quad\mbox{for all }\  \al\in\kb.
\]

\begin{Def}
	Suppose that $\al\in\kb$ satisfies $\as(\al)\in K\setminus \as(K)$. Then, we say that $L=K(\al)$ is an \textbf{AS extension} of $K$ and $\al$ is	 an \textbf{AS generator} of $L/K$. 
\end{Def}

The following observation can be found in \cite{Kuhl}.
\begin{Lem}\label{AllGen}
	Let $L/K$ be an AS extension with AS generator $\al\in L$. Then, $\F_p^*\,\al+K$ is the set of all AS generators of $L/K$.  
\end{Lem}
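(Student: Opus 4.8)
The plan is to prove Lemma \ref{AllGen} by a direct computation based on the structure of Artin-Schreier extensions. Suppose $\al\in L$ is an AS generator, so $\as(\al)=\al^p-\al\in K\setminus\as(K)$. First I would verify that every element of $\F_p^*\,\al+K$ is indeed an AS generator: take $c\in\F_p^*$ and $a\in K$ and set $\be=c\al+a$. Using that $\as$ is $\F_p$-linear and that $\as(c\al)=c^p\al^p-c\al=c(\al^p-\al)=c\,\as(\al)$ for $c\in\F_p$ (since $c^p=c$), we get $\as(\be)=c\,\as(\al)+\as(a)$. Since $\as(\al)\notin\as(K)$ and $c\neq 0$, this lies in $K\setminus\as(K)$; hence $\be$ generates an AS extension. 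Moreover $\be\in L$ and $\be\notin K$ (otherwise $\al=c^{-1}(\be-a)\in K$, contradicting $[L:K]=p$), so $K(\be)=L$ because $[L:K]=p$ is prime. Thus $\be$ is an AS generator of $L/K$.

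For the converse, I would take an arbitrary AS generator $\ga$ of $L/K$, so $\ga\in L$, $K(\ga)=L$, and $\as(\ga)\in K\setminus\as(K)$. Since $L=K(\al)$ has degree $p$ over $K$, we may write $\ga=\sum_{i=0}^{p-1}a_i\al^i$ with $a_i\in K$. The key step is to apply $\as$ and use the minimal polynomial relation $\al^p=\al+\as(\al)$ to reduce $\as(\ga)$ to an expression in the basis $1,\al,\dots,\al^{p-1}$, then compare coefficients with $\as(\ga)\in K$, which forces the coefficients of $\al,\dots,\al^{p-1}$ to vanish. This is the routine but slightly fiddly computation: $\as(\ga)=\ga^p-\ga=\sum a_i^p\al^{ip}-\sum a_i\al^i$, and one expands $\al^{ip}=(\al^p)^i=(\al+\as(\al))^i$ modulo nothing (it is an honest identity in $L$) and collects terms. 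Requiring all coefficients of $\al^j$ for $j\geq 1$ to be zero should yield, working from the top degree down, that $a_i=0$ for $2\le i\le p-1$ and $a_1\in\F_p^*$; the coefficient of $\al$ then gives the condition $a_1^p=a_1$, i.e.\ $a_1\in\F_p$, and $a_1\neq 0$ since $\ga\notin K$. Hence $\ga=a_1\al+a_0\in\F_p^*\,\al+K$.

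The main obstacle is the coefficient comparison in the converse direction: one has to be careful that the expansion of $\sum_{i=0}^{p-1}a_i^p(\al+\as(\al))^i$ really does force all the higher coefficients to vanish. A cleaner way to organize this — and the route I would actually take to avoid a messy induction — is to argue degree-theoretically: for $\ga=\sum_{i=0}^{p-1}a_i\al^i$ to satisfy $[K(\ga):K]<p$ would force $\ga\in K$ (again because $p$ is prime), so any generator $\ga\notin K$ automatically has $K(\ga)=L$; the real content is then just that $\as(\ga)\in K$ pins down the shape of $\ga$. For that, note that $\as(\ga)-\ga^p+\ga=0$ combined with the fact that $\{\al^{\sigma}-\al\}$ over the conjugates gives another route: if $\sigma$ is a generator of a splitting-field automorphism, $\sigma(\al)=\al+1$, and $\as(\ga)\in K$ means $\sigma(\as(\ga))=\as(\ga)$, i.e.\ $\as(\sigma(\ga)-\ga)=0$, so $\sigma(\ga)-\ga\in\F_p$. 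Writing $\ga=\sum a_i\al^i$ and computing $\sigma(\ga)-\ga=\sum a_i((\al+1)^i-\al^i)$, the leading term in $\al$ has degree $p-2$ with coefficient $(p-1)a_{p-1}$, forcing $a_{p-1}=0$, and descending we get $a_i=0$ for $i\geq 2$ and $\sigma(\ga)-\ga=a_1\in\F_p^*$. This makes the computation linear and transparent; I would present the converse this way and keep the forward direction as the short $\F_p$-linearity argument above.
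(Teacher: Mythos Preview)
Your proof is correct. The paper itself does not supply a proof of this lemma; it simply attributes the observation to \cite{Kuhl}, so there is nothing to compare against on the paper's side.

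A brief remark on your two routes for the converse: the Galois-theoretic argument you settle on is the standard and cleanest one. The key point, which you identify correctly, is that $\as(\ga)\in K$ forces $\sg(\ga)-\ga\in\F_p$, and then expanding $\sg(\ga)-\ga=\sum_i a_i\bigl((\al+1)^i-\al^i\bigr)$ in the $K$-basis $1,\al,\dots,\al^{p-2}$ and reading off coefficients from the top down kills $a_{p-1},\dots,a_2$ (the leading coefficient at each stage is $i\,a_i$ with $1\le i\le p-1$, hence nonzero in $K$ unless $a_i=0$) and leaves $\sg(\ga)-\ga=a_1\in\F_p$, with $a_1\ne0$ since $\ga\notin K$. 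Your first sketch via direct expansion of $\as(\ga)$ would also go through, but the Galois route linearizes the problem and avoids the bookkeeping of reducing $\al^{ip}$; presenting the converse that way is the right call.
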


An AS extension $L/K$ is Galois with cyclic Galois group $C_p$, generated by the automorphism $\sg$ determined by  $\sg(\al)=\al+1$, on an arbitrary AS generator  $\al$ of $L/K$.

\subsection{Compositum of AS extensions}

Let $\al_1,\dots,\al_r\in\kb$ be AS elements such that the extensions $K(\al_1),\dots,K(\al_r)$  are  linearly disjoint over $K$.

The compositum $L=K(\al_1,\dots,\al_r)$ is a Galois extension with Galois group generated by the automorphisms $\sg_1,\dots,\sg_r$ determined by
\[
\sg_i(\al_j)=\begin{cases}
\al_i+1,&\mbox{ if }i=j,\\
\al_j,&\mbox{ if }i\ne j, 
\end{cases}\qquad \mbox{ for all }\ i=1,\dots,r.
\]
Since these automorphisms commute, we have $G:=\op{Gal}(L/K)\simeq C_p^r$.

The subgroups $H\sub G$ such that $(G\colon H)=p$ can be identified to 1-codimensional $\F_p$-vector subspaces of $C_p^r$.  Hence, there are $(p^r-1)/(p-1)$ subgroups of index $p$. It is easy to check that the corresponding subextensions of $L/K$ of degree $p$ over $K$ are the AS extensions generated by
\begin{equation}\label{gens}
\ell_1\al_1+\cdots+\ell_r\al_r,\qquad  (\ell_1,\dots,\ell_r)\in\F_p^r\setminus \{(0,\dots,0)\}.
\end{equation}
Two vectors  $(\ell_1,\dots,\ell_r),(\ell'_1,\dots,\ell'_r)\in\F_p^r$ determine the same AS extension if and only if $(\ell'_1,\dots,\ell'_r)=
\la(\ell_1,\dots,\ell_r)$
for some $\la\in\F_p^*$.

\begin{Lem}\label{compos}
Let $a_1,\dots, a_n\in K$ and \,$\al_1,\dots,\al_n\in\kb$, such that
\[
\as(\al_i)=a_i,\quad \mbox{ for all }\ i=1,\dots,n.
\]
Then, the following conditions are equivalent.
\begin{enumerate}
	\item [(a)] \ $K(\al_1),\dots, K(\al_n)$ are linearly disjoint over $K$. 
	\item [(b)] \ For all $i=1,\dots,n$, we have: \ $\al_i\not\in
	\gen{\al_1,\dots,\al_{i-1},\al_{i+1},\dots,\al_n}_{\F_p}+K$.
	\item [(c)] \ The images of $a_1,\dots,a_n$ in $K/\as(K)$ are $\F_p$-linearly independent.
\end{enumerate}
\end{Lem}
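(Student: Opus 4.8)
The plan is to prove the three conditions equivalent by establishing a cycle, say $(a)\Leftrightarrow(b)$ and $(b)\Leftrightarrow(c)$, exploiting the correspondence between AS generators and elements of the $\F_p$-vector space $K/\as(K)$. The key algebraic fact underlying everything is Lemma \ref{AllGen}: the AS generators of a single extension $K(\al_i)$ are precisely $\F_p^*\al_i+K$, so that two AS elements $\al,\al'$ generate the same extension iff $\al'-\la\al\in K$ for some $\la\in\F_p^*$, equivalently iff $\as(\al')$ and $\as(\al)$ have $\F_p$-proportional (in particular, both nonzero) images in $K/\as(K)$. I would state this cleanly at the outset, since it makes the passage between the ``$\al$-language'' of (b) and the ``$a$-language'' of (c) essentially a matter of applying $\as$ and noting $\as$ is $\F_p$-linear with kernel $\F_p\sub K$.

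For $(b)\Leftrightarrow(c)$: a dependence relation $\al_i=\sum_{j\ne i}\ell_j\al_j+c$ with $\ell_j\in\F_p$, $c\in K$, applied under $\as$, gives $a_i=\sum_{j\ne i}\ell_j a_j+\as(c)$, i.e. the image of $a_i$ in $K/\as(K)$ lies in the span of the images of the other $a_j$'s; conversely such a relation mod $\as(K)$ lifts, because $\as(\al_i)-\sum_{j\ne i}\ell_j\as(\al_j)=\as\!\big(\al_i-\sum_{j\ne i}\ell_j\al_j\big)$ already lies in $\as(K)$, hence equals $\as(c)$ for some $c\in K$, and since $\ker(\as)=\F_p\sub K$ we get $\al_i-\sum_{j\ne i}\ell_j\al_j-c\in\F_p\sub K$, so $\al_i\in\gen{\al_j:j\ne i}_{\F_p}+K$. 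Thus failure of (b) for some index $i$ is exactly failure of (c). (One should note that (c) as stated forces each $a_i\notin\as(K)$, equivalently each $K(\al_i)$ is a genuine degree-$p$ extension; this is harmless since $(a)$ and $(b)$ each also force this — if some $\al_i\in K$ then linear disjointness fails and (b) fails with the empty combination.)

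For the equivalence with (a): here I would use the Galois-theoretic picture already set up in the paragraph preceding the lemma. If (b) (equivalently (c)) holds, the images of $a_1,\dots,a_n$ span an $n$-dimensional $\F_p$-subspace $W\sub K/\as(K)$; by the standard Kummer-type correspondence for elementary abelian $p$-extensions (Artin–Schreier theory), the compositum $K(\al_1,\dots,\al_n)$ is Galois over $K$ with group $\cong\op{Hom}(W,\F_p)\cong C_p^n$, and the $\al_i$ correspond to a basis of $W$, which forces the $n$ cyclic subextensions $K(\al_i)$ to be linearly disjoint — concretely, $[K(\al_1,\dots,\al_n):K]=p^n=\prod[K(\al_i):K]$ together with each factor being Galois gives linear disjointness. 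Conversely, if (b) fails, say $\al_i=\sum_{j\ne i}\ell_j\al_j+c$, then $\al_i\in K(\al_j:j\ne i)$, so $K(\al_i)\sub K(\al_j:j\ne i)$ and the compositum has degree $\le p^{n-1}<\prod[K(\al_j):K]$, contradicting linear disjointness; hence (a) fails. I expect the main obstacle to be purely expository rather than mathematical: deciding how much of Artin–Schreier/Kummer theory to invoke as a black box versus proving the degree equality $[K(\al_1,\dots,\al_n):K]=p^n$ directly by an induction peeling off one $\al_i$ at a time and using that at each stage $a_i\notin\as\big(K(\al_1,\dots,\al_{i-1})\big)$ — which itself reduces, via the norm/trace or a direct linear-algebra argument, to the $\F_p$-independence in (c). I would most likely give the short induction, since it keeps the paper self-contained and the bookkeeping is light.
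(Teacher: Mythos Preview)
Your argument for $(b)\Leftrightarrow(c)$ is exactly the paper's: apply the $\F_p$-linear operator $\as$ in one direction, and use $\ker(\as)=\F_p\sub K$ in the other. For $(a)\Leftrightarrow(b)$ you are also on the paper's track: the paper runs the very induction you sketch at the end, peeling off one $\al_i$ at a time. The only difference is that where you leave the inductive step as ``via the norm/trace or a direct linear-algebra argument'', the paper makes it concrete using the material already set up just before the lemma: if $K(\al_1),\dots,K(\al_r)$ are linearly disjoint and $K(\al_n)\sub L=K(\al_1,\dots,\al_r)$, then $K(\al_n)$ is one of the $(p^r-1)/(p-1)$ degree-$p$ subextensions of $L/K$, all of which are generated by some $\ell_1\al_1+\cdots+\ell_r\al_r$; Lemma~\ref{AllGen} then forces $\al_n\in\gen{\al_1,\dots,\al_r}_{\F_p}+K$, contradicting $(b)$. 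So your plan is correct and converges to the paper's proof; just replace the vague ``norm/trace or linear-algebra'' clause with this explicit use of the classification of degree-$p$ subextensions and Lemma~\ref{AllGen}, and you have exactly the paper's argument without needing any black-box Artin--Schreier theory.
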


\begin{proof}
Let us first show that (a) and (b) are equivalent. The implication (a)$\Rightarrow$(b) being obvious, let us show that (b) implies (a).

Suppose that, for some $r<n$, the extensions $K(\al_1),\dots, K(\al_r)$ are linearly disjoint over $K$ and $K(\al_n)\sub L:=K(\al_1,\dots,\al_r)$. Then, $\al_n$ generates the same AS extension than one of the AS generators described in (\ref{gens}). By Lemma \ref{AllGen}, $\al_n$ belongs to $\gen{\al_1,\dots,\al_r}_{\F_p}+K$, contradicting (b).

Now, let us prove that (b) and (c) are equivalent. Suppose that, for instance,
\[
\al_n=\ell_1\al_1+\cdots+\ell_{n-1}\al_{n-1}+a
\]
for some $a\in K$ and $\ell_1,\dots,\ell_{n-1}\in\F_p$. By applying the $\F_p$-linear operator $\as$, we get
\begin{equation}\label{notLI}
a_n=\ell_1a_1+\cdots+\ell_{n-1}a_{n-1}+\as(a),
\end{equation}
contradicting (c). This shows that  (c)$\Rightarrow$(b).

Conversely, an equality as in (\ref{notLI}) implies that
\[
\as(\al_n)=\as\left(\ell_1\al_1+\cdots+\ell_{n-1}\al_{n-1}+a\right).
\]
This implies that $\al_n\in\gen{\al_1,\dots,\al_{n-1}}_{\F_p}+K$. Hence, (b) implies (c).
\end{proof}

\subsection{Main theorem}
\begin{Teo}\label{generaliza}
Suppose that $(K,v)$ is a Henselian field. Let $\alpha_1,\ldots,\alpha_n\in \overline{K}$ be generators of AS extensions such that $K(\alpha_1),\ldots,K(\alpha_n)$ are linearly disjoint over $K$. Suppose that \begin{equation}\label{equline}
\exists c_1,\ldots,c_n\in K: (a_1,\ldots,a_n)\in \F_p^n\setminus \{(0,\ldots,0)\}\Longrightarrow  v(c_1a_1+\ldots+c_na_n)=0.
\end{equation}
If $d_1(\alpha_i)=0^-$ for every $i=1,\ldots, n$, then $L=K(\alpha_1,\ldots,\alpha_n)$ has depth one over $K$.
\end{Teo}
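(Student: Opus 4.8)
The plan is to exhibit a single generator $\theta$ of $L/K$ whose valuation $v_\theta$ has an Okutsu sequence of length one, and then invoke Theorem \ref{OSdepth}. By the structure theory of the compositum recalled above, a natural candidate is an AS generator of a well-chosen degree-$p$ subextension, but since $[L:K]=p^n$ we cannot have $\theta$ of degree $p$; instead I would look for $\theta\in L$ with $\deg_K\theta=p^n$ and a short Okutsu sequence $[A_0=\{1\},\,A_1=\{\theta\}]$. The key point is condition (OS0): we must show that for every $b\in\overline K$ of degree strictly less than $p^n$ (that is, every $b\in\overline{K}$ not generating $L$ over $K$ — more precisely of degree $< n$ in the sense of the filtration, but here degree $<p^n$), the distance $v(\theta-b)$ is bounded above by $v(\theta-a)$ for the single element $a\in A_0$, i.e. by $v(\theta)$ (after normalizing $A_0=\{0\}$ or $\{1\}$). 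Equivalently, $d_1(\theta)=0^-$ should propagate: I want $\max D_m(\theta,K)$ to not exceed $0^-$ for all $m<p^n$, so that the sequence of minimal degrees of distances \eqref{degDist} is just $1=d_0<d_s=p^n=n$, forcing $r=s=1$.

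The first step is to choose $\theta$: take $\theta=\alpha_1+t_2\alpha_2+\cdots+t_n\alpha_n$ for suitable $t_i\in K$ (or a product-type element), arranged so that $\theta$ genuinely has degree $p^n$ over $K$ — this uses linear disjointness and Lemma \ref{compos} — and so that the "first distance" $d_1(\theta)=0^-$, which should follow from the hypothesis $d_1(\alpha_i)=0^-$ together with condition \eqref{equline}: the latter is precisely what prevents cancellation of leading terms among the $a_i$ and guarantees that no $K$-linear combination of the $\alpha_i$ can be approximated better than "distance $0^-$" by elements of $K$. The second step is the heart of the argument: show $v(\theta-b)\le 0$ (strictly, stays on the $0^-$ side) for every $b$ of degree $<p^n$. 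Here I would argue by contradiction using the Galois action: if $v(\theta-b)$ were large, then applying the automorphisms $\sigma_i$ and comparing $v(\sigma_i\theta-\sigma_i b)$ with $v(\sigma_i\theta-b)$ one extracts, via the ultrametric inequality and $\sigma_i\theta-\theta\in K^\times$ (of value $0$ by hypothesis since $\sigma_i\theta-\theta$ is a combination of $\sigma_i\alpha_i-\alpha_i=1$'s scaled by the $t_j$), a contradiction with the defect/immediate nature of each AS extension — this is exactly the mechanism by which AS defect extensions are "pure." Concretely: defect AS extensions are immediate, so $vL=vK$ and $Lv=Kv$; combined with \eqref{equline} this should force all the intermediate distances to collapse.

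The main obstacle I anticipate is the middle step — ruling out good approximations $b$ of intermediate degree (degrees $p, p^2,\dots,p^{n-1}$, corresponding to the proper subfields of $L$). For a degree-$p$ subfield this is essentially the known fact that a single defect AS extension has depth one (the $n=1$ case, which presumably follows from \cite{NN25} or from $d_1(\alpha_1)=0^-$ directly via Theorem \ref{OSdepth}); the difficulty is propagating this up the tower while controlling how the distances interact across the different $\sigma_i$-orbits. I expect the argument to proceed by induction on $n$: assume the compositum $K(\alpha_1,\dots,\alpha_{n-1})$ has depth one with a known generator $\theta'$, realize $L=K(\alpha_1,\dots,\alpha_{n-1})(\alpha_n)$ as an AS extension of a depth-one field, and show that the relevant distance data for $\theta$ over $K$ is governed simultaneously by the data of $\theta'$ over $K$ and of $\alpha_n$ over $K$, with \eqref{equline} ensuring no interference. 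The technical crux is then a careful bookkeeping of $\max D_m$ under this construction, showing it never jumps above $0^-$ until $m$ reaches $p^n$, so that the Okutsu sequence has length one and Theorem \ref{OSdepth} gives $\dep(L/K,v)=1$.
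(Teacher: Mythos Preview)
Your setup is correct and matches the paper: the generator is $\theta=c_1\alpha_1+\cdots+c_n\alpha_n$, using precisely the $c_i$'s from condition~\eqref{equline}, and the goal is to show the Okutsu sequence of $\theta$ has length one. You also correctly identify the mechanism (Galois action plus the ultrametric inequality, with $\sigma\theta-\theta$ of value~$0$). What you are missing is that this mechanism has a name: it is exactly \emph{Krasner's Lemma}, and once you invoke it the proof is three lines long, with no induction and no intermediate-degree bookkeeping.

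Concretely: the conjugates of $\theta$ are $\theta+\sum_i c_i a_i$ with $(a_1,\dots,a_n)\in\F_p^n$, and condition~\eqref{equline} says every nonzero such combination has value~$0$. This simultaneously gives (i) that $\theta$ has $p^n$ distinct conjugates, hence generates $L$, and (ii) that Krasner's constant $\omega(\theta):=\max_{\sigma\ne1}v(\theta-\sigma\theta)$ equals~$0$. Now from $d_1(\alpha_i)=0^-$ one gets $d_1(\theta)\ge0^-$. For any $\xi\in\overline K$ with $v(\theta-\xi)$ exceeding this cut, Krasner's Lemma forces $K(\theta)\subseteq K(\xi)$, hence $\deg_K\xi\ge p^n$. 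That single stroke rules out \emph{all} intermediate degrees $1<m<p^n$ at once, so the Okutsu sequence is $[A_0,\{\theta\}]$ and Theorem~\ref{OSdepth} finishes.

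So your anticipated ``main obstacle'' (handling degrees $p,p^2,\dots,p^{n-1}$ separately via induction on $n$) evaporates: you do not need to know anything about the subfields of $L$, only that $\omega(\theta)=0$ sits at the same place as $d_1(\theta)$. The role of~\eqref{equline} is not to ``prevent cancellation of leading terms among the $a_i$'' in some approximation sense, but purely to pin down $\omega(\theta)$.
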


\begin{proof}
Consider
\[
\theta:=c_1\alpha_1+\cdots +c_n\al_n\in L.
\]	
The Galois conjugates of $\t$ over $K$ are
\[
\t+c_1\F_p+\cdots+c_n\F_p.
\]
Since this set has cardinality $p^n$, we see that $\t$ is a generator of $L/K$. Also, note that Krasner's constant of $\t$ is zero:
\[
\om(\t):=\max\{v(\t-\sg(\t))\mid \sg\in G, \sg\ne1\}=0.
\]

Since $d_1(\al_1)=\cdots=d_1(\al_n)=0^-$, we easily deduce that $d_1(\theta)\geq 0^-$. Now, for every $\xi\in\kb$, the inequality
\[
v(\t-\xi)>d_1(\theta)\geq 0^-
\]
implies that $K(\t)\sub K(\xi)$ by Krasner's Lemma. Hence, $\deg_K \xi\ge p^n$ and this implies that $\t$ admits an Okutsu sequence of length one. By Theorem \ref{OSdepth}, $\t$ has depth one and this concludes our proof.
\end{proof}
\begin{Obs}
Observe that condition \eqref{equline} is easily satisfied. Such $c_i$'s exist if we assume for instance that $K$ contains an infinite algebraic extension of $\F_p$.
\end{Obs}
The above result can be rephrased in terms of the classification of AS defect extensions presented in \cite{Kuhl}.
\begin{Def}
An AS defect extension $(K(\alpha)/K,v)$ is  said to be \textbf{independent} if $d_1(\alpha)=H^-$ for some convex subgroup $H$ of $\Gamma$. Otherwise, it is called \textbf{dependent}.
\end{Def}
\begin{Obs}
If the rank of $v$ is one, then $(K(\alpha)/K,v)$ is independent if and only if $d_1(\alpha)=0^-$.
\end{Obs}

\begin{Cor}
Suppose that $(K,v)$ is a rank one Henselian field and that condition \eqref{equline} is satisfied. Then, the compositum of finitely many AS independent defect extensions has depth one.
\end{Cor}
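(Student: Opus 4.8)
The plan is to obtain this as a direct consequence of Theorem~\ref{generaliza}, the only real work being a reduction to the linearly disjoint situation. So let $L=K(\alpha_1,\ldots,\alpha_n)$ be the compositum of AS independent defect extensions $K(\alpha_i)/K$, and write $\as(\alpha_i)=a_i\in K\setminus\as(K)$. First I would replace the $\alpha_i$ by a sub-family that still generates $L$ but for which linear disjointness holds: by Lemma~\ref{compos}, $K(\alpha_{i_1}),\ldots,K(\alpha_{i_m})$ are linearly disjoint over $K$ precisely when the classes $\bar a_{i_1},\ldots,\bar a_{i_m}$ are $\F_p$-linearly independent in $K/\as(K)$, so I choose $i_1<\cdots<i_m$ so that $\bar a_{i_1},\ldots,\bar a_{i_m}$ is a basis of the $\F_p$-span of $\bar a_1,\ldots,\bar a_n$. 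For any remaining index $j$ one has $a_j=\sum_k\ell_k a_{i_k}+\as(c_j)$ with $\ell_k\in\F_p$ and $c_j\in K$; applying $\as$ to $\alpha_j-\sum_k\ell_k\alpha_{i_k}-c_j$ gives $0$, so this element lies in $\ker\as=\F_p$ and hence $\alpha_j\in K(\alpha_{i_1},\ldots,\alpha_{i_m})$. Thus $L=K(\alpha_{i_1},\ldots,\alpha_{i_m})$, and after renumbering I may assume that $K(\alpha_1),\ldots,K(\alpha_n)$ are linearly disjoint over $K$.

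Next I would translate the hypotheses into the exact form required by Theorem~\ref{generaliza}. Since each $K(\alpha_i)/K$ is an \emph{independent} AS defect extension and $v$ has rank one, the Remark above gives $d_1(\alpha_i)=0^-$ for every $i$ (the only convex subgroups of a rank one group $\Gamma$ are $0$ and $\Gamma$, and $d_1(\alpha_i)$ cannot be the cut $\Gamma^-=-\infty$ because $D_1(\alpha_i,K)$ is nonempty, forcing the convex subgroup to be $0$). Condition \eqref{equline} is satisfied by assumption. Hence $(K,v)$ is Henselian, the $K(\alpha_i)$ are linearly disjoint, \eqref{equline} holds, and $d_1(\alpha_i)=0^-$ for all $i$; Theorem~\ref{generaliza} then yields that $L=K(\alpha_1,\ldots,\alpha_n)$ has depth one over $K$.

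I do not expect a genuine obstacle here: the argument is formal once the reduction to linear disjointness is in place. The only point deserving a line of care is that this reduction preserves all the hypotheses, which it does — the retained generators $\alpha_{i_k}$ are among the original ones, so each $K(\alpha_{i_k})/K$ remains an independent AS defect extension, and the inclusion $\alpha_j\in K(\alpha_{i_1},\ldots,\alpha_{i_m})$ is immediate from $\ker\as=\F_p$.
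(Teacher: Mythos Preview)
Your proof is correct and follows the paper's intended route: invoke the rank-one Remark to get $d_1(\alpha_i)=0^-$ and then apply Theorem~\ref{generaliza}. The paper states the Corollary without proof, treating it as immediate; your added reduction to the linearly disjoint case via Lemma~\ref{compos} is a harmless extra precaution (and it does preserve \eqref{equline}, since the $c_{i_k}$ inherited from the original $c_1,\dots,c_n$ still work for the subfamily).
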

\subsection{AS towers of degree $p^2$}
A finite field extension $M/K$ is said to be an \textbf{AS tower} if it can be constructed as a chain of extensions
\[
K=L_0\,\sub\,L_1\,\sub\,\cdots\,\sub\,L_n=M
\] 
in which every step $L_{i-1}\sub L_i$ is an AS extension.
A compositum of AS extensions of $K$ can be seen as an AS tower in an obvious way.

Let $\ks\sub \kb$ be the separable closure of $K$. Consider the absolute Galois group
\[
\mathbb{G}:=\gal(\ks/K).
\]
Let us fix an AS extension $L/K$ with AS generator $\al$. We shall denote \[a:=\as(\al)\in K\setminus \as(K).\]

Also, we fix an automorphism  $\sg\in \mathbb{G}$ such that $\sg(\al)=\al +1$.
As mentioned in the previous section, $L/K$ is a Galois extension  with  cyclic Galois group  generated by the restriction of $\sg$ to $L$.  Every $\be\in L$ has a trace over $K$ given by 
\[
\Tr_{L/K}(\be):=\be+\sg(\be)+\cdots +\sg^{p-1}(\be).
\]
Note that $\Tr_{L/K}(K)=\{0\}$.

Consider an AS extension of $L$ with AS generator $\eta\in\kb$:
\[
K\sub L=K(\al)\sub M=L(\eta).
\]
Denote $b:=\as(\eta) \in L\setminus \as(L)$.
The following criterion for $M/K$ to be a Galois extension can be deduced from some topics discussed in \cite{Lorenz}. For the ease of the reader we provide a short proof.

\begin{Lem}\label{criterion}
	The extension $M/K$ is Galois 	if and only if $\sg(b)-b\in \as(L)$.
\end{Lem}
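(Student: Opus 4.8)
The plan is to reduce the Galois condition on $M/K$ to the single requirement that the fixed automorphism $\sigma\in\mathbb{G}=\gal(\ks/K)$ stabilize $M$, and then to read off both implications from the $\F_p$-additivity of $\as$ together with the elementary fact that every group of order $p^{2}$ is abelian.

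\emph{Reduction.} First I would note that $M/K$ is automatically separable: each step of the AS tower $K\subseteq L\subseteq M$ is generated by a root of a polynomial $x^{p}-x-c$, whose derivative $-1$ is a unit. Hence $M/K$ is Galois exactly when $M$ is stable under every element of $\mathbb{G}$. Next I would use that $L/K$ is Galois with $\gal(L/K)$ generated by $\sigma|_{L}$, so any $\tau\in\mathbb{G}$ may be written $\tau=\sigma^{m}\rho$ with $\rho\in\gal(\ks/L)$; such a $\rho$ fixes $b\in L$, hence sends $\eta$ into the set $\eta+\F_{p}\subseteq M$ of roots of $x^{p}-x-b$, and since $\rho$ fixes $L$ we get $\rho(M)=L(\rho(\eta))=M$. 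Therefore $\tau(M)=\sigma^{m}(M)$ for all $\tau$, so $M/K$ is Galois if and only if $\sigma(M)=M$, equivalently (as $\sigma(L)=L$ forces $\sigma(M)=L(\sigma(\eta))$) if and only if $\sigma(\eta)\in M$.

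\emph{The two implications.} For the ``if'' direction I would assume $\sigma(b)-b=\as(c)$ with $c\in L$; applying $\sigma$ to $\as(\eta)=b$ gives $\as(\sigma(\eta))=\sigma(b)=\as(\eta+c)$, so $\sigma(\eta)-\eta-c\in\ker\as=\F_{p}$, whence $\sigma(\eta)\in M$ and $M/K$ is Galois by the reduction. For the converse I would assume $M/K$ Galois, so $G:=\gal(M/K)$ has order $[M:K]=p^{2}$ and is therefore abelian; set $\widetilde\sigma:=\sigma|_{M}\in G$, let $\tau\in\gal(M/L)$ be the generator with $\tau(\eta)=\eta+1$, and put $w:=\sigma(\eta)-\eta$. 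Evaluating $\widetilde\sigma\tau=\tau\widetilde\sigma$ at $\eta$ and using $\widetilde\sigma(\eta)=\eta+w$ yields $\eta+w+1=\eta+1+\tau(w)$, i.e.\ $\tau(w)=w$, so $w\in M^{\gal(M/L)}=L$; since $\as(w)=\as(\sigma(\eta))-\as(\eta)=\sigma(b)-b$, this gives $\sigma(b)-b\in\as(L)$.

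The hard part will be the reduction step, i.e.\ checking carefully that $\mathbb{G}$-stability of $M$ is equivalent to stability under the single automorphism $\sigma$; this rests on $\gal(\ks/L)$ automatically preserving $M$ and on $\sigma|_{L}$ generating $\gal(L/K)$. Once that is in place the ``if'' direction is a one-line computation with $\as$, and the ``only if'' direction needs nothing more than the commutativity of $G$ and the commutation identity between $\sigma|_{M}$ and the standard generator $\tau$ of $\gal(M/L)$.
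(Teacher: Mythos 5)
Your proposal is correct, but the route differs from the paper's in both directions. For the forward (``only if'') implication, the paper notes that $\sigma(\eta)$ must be an AS generator of $M/L$, applies Lemma~\ref{AllGen} to write $\sigma(b)\in\ell b+\as(L)$ for some $\ell\in\F_p^*$, and then iterates $\sigma$ (using $\sigma^p|_L=1$ and $\ell^p=\ell$) to force $\ell=1$. You instead invoke the group-theoretic fact that every group of order $p^{2}$ is abelian, evaluate the commutation relation $\sigma|_M\tau=\tau\sigma|_M$ at $\eta$ to conclude that $w=\sigma(\eta)-\eta$ is fixed by $\gal(M/L)$, hence lies in $L$, and then simply observe $\as(w)=\sigma(b)-b$. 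This is a shorter, more conceptual path: the abelianness of $p^{2}$-groups does the work that the paper carries out by hand via the $\ell^p=\ell$ trick; the trade-off is that you import an external fact whereas the paper stays inside the toolkit (AllGen) it has already built. For the converse, the paper directly verifies $\tau(\eta)\in M$ for every $\tau\in\mathbb{G}$ by writing $\tau|_L=\sigma^m|_L$ and telescoping $\sigma^m(b)-b$ into $\as(L)$, whereas you first isolate a clean reduction — any $\rho\in\gal(\ks/L)$ automatically stabilizes $M$ because it permutes the roots $\eta+\F_p$, so Galois-ness of $M/K$ reduces to $\sigma(M)=M$ — and the ``if'' direction then becomes a one-line $\as$-computation. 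Both organizations are sound; the paper keeps the two implications structurally parallel, while your reduction makes the separation between the ``fixed $\sigma$'' and the rest of $\mathbb{G}$ explicit, which is slightly tidier.
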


\begin{proof}
	Since $M/K$ is a separable extension, we  bother only about normality. 
	Suppose that $M/K$ is a normal extension. Then, $\sg(\eta)\in M$. Since \[\as(\sg(\eta))=\sg(b)\in L\setminus \as(L),\] the element $\sg(\eta)$ is an AS generator of $M/L$.  By Lemma \ref{AllGen}, 
	\[
	\sg(b)\in\ell b+\as(L),\quad\mbox{for some }\ \ell\in \F_p^*.
	\]	
	A successive application of the automorphism $\sg$ leads to
	\[
	b=\sg^p(b)\in\ell^pb+\as(L)=\ell b+\as(L).
	\]
	This implies that $\ell=1$, because otherwise we get a contradiction:
	\[
	(1-\ell)b\in\as(L),\ \ell\ne 1\imp b\in\as(L).
	\]
	Therefore, we have seen that $\sg(b)-b\in \as(L)$.
	
	Conversely, suppose now that  $\sg(b)-b=\as(\be)$ for some $\be\in L$. 	
	Take any $\tau\in \mathbb{G}$. We shall have $\tau_{\mid L}=\sg^m_{\mid L}$ for some  $m\in\N$. Hence,
	\[
	\tau(b)-b=\sg^m(b)-b=\as(\be_m),\quad\mbox{for some }\ \be_m\in \as(L).
	\] 
Since $\as(\tau(\eta)-\eta-\be_m)=\tau(b)-b-\as(\be_m)=0$, we have $\tau(\eta)-\eta-\be_m\in \F_p$, so that $\tau(\eta)\in M$. 
	This shows that $M/K$ is a normal extension.
\end{proof}\e

Let us now study the structure of $G:=\gal(M/K)$ in the Galois case. Thus,  from now on we suppose that 
\[\sg(b)-b=\as(\be),\quad\mbox{for some } \ \be\in L.\]

Since $\Tr_{L/K}(b)=\Tr_{L/K}(\sg(b))$,  we deduce that $\Tr_{L/K}(\as(\be))=0$. Since 
\[
\Tr_{L/K}(\as(\be))=\Tr_{L/K}(\be^p)-\Tr_{L/K}(\be)=\Tr_{L/K}(\be)^p-\Tr_{L/K}(\be),
\]
this implies that $\Tr_{L/K}(\be)\in \F_p$.

Let us still denote by $\sg\in G$ the restriction of $\sg$ to $M$. The subgroup $\gal(M/L)\sub G$ is cyclic generated by the automorphism $\tau\in G$ determined by
\[
\tau_{\mid L}=1,\qquad \tau(\eta)=\eta+1.
\] 

Since $\as(\sg(\eta)-\eta -\be)=\sg(b)-b-\as(\be)=0$, we have $\sg(\eta)-\eta -\be\in\F_p$. By replacing $\sg$ with $\sg\tau^m$ for a suitable $m\in\N$, we may assume that 
\[
\sg(\eta)=\eta+\be.
\] 
These two automorphisms $\sg,\tau\in G$ commute:
\[
\sg\tau(\al)=\sg(\al)=\al+1=\tau(\al+1)=\tau\sg(\al),
\]
\[
\sg\tau(\eta)=\sg(\eta+1)=\eta+\be+1=\tau(\eta+\be)=\tau\sg(\eta).
\]
Since $G$ has order $p^2$, we deduce that
\begin{equation}\label{power}
	\begin{array}{rcl}
		\sg^p\ne 1&\imp& G=\gen{\sg}\simeq C_{p^2},\\
		\sg^p= 1&\imp& G=\gen{\sg,\tau}\simeq C_p\times C_p.	
	\end{array}	
\end{equation}
The element $\be\in L$ allows us to distinguish between these two possibilities. 

\begin{Cor}\label{structure}
	Suppose that $M/K$ is Galois and let $G=\gal(M/K)$. Then,
	\[
	\begin{array}{ccl}
		G\simeq C_p\times C_p&\ \mbox{if and only if}\ & \Tr_{L/K}(\be)=0.\\
		G\simeq C_{p^2}&\ \mbox{if and only if}\ & \Tr_{L/K}(\be)\ne0.
	\end{array}
	\]
\end{Cor}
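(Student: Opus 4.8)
The plan is to read off the answer directly from the dichotomy \eqref{power} established just above, by identifying when $\sg^p=1$. Recall that we have arranged $\sg$ so that $\sg(\al)=\al+1$ and $\sg(\eta)=\eta+\be$, and that $\tau$ fixes $L$ with $\tau(\eta)=\eta+1$. The key observation is that $\sg^p$ fixes $L$ (since $\sg^p$ restricts to $\sg_{\mid L}^p=1$, as $\gal(L/K)\simeq C_p$), so $\sg^p$ lies in $\gal(M/L)=\gen{\tau}$; hence $\sg^p=\tau^k$ for some $k\in\F_p$, and $\sg^p=1$ if and only if $k=0$, i.e. if and only if $\sg^p(\eta)=\eta$.

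First I would compute $\sg^p(\eta)$. Iterating $\sg(\eta)=\eta+\be$ gives
\[
\sg^2(\eta)=\sg(\eta)+\sg(\be)=\eta+\be+\sg(\be),
\]
and inductively
\[
\sg^p(\eta)=\eta+\be+\sg(\be)+\cdots+\sg^{p-1}(\be)=\eta+\Tr_{L/K}(\be).
\]
Therefore $\sg^p(\eta)=\eta$ precisely when $\Tr_{L/K}(\be)=0$. Combining this with the remark above, $\sg^p=1$ if and only if $\Tr_{L/K}(\be)=0$, and $\sg^p\neq1$ if and only if $\Tr_{L/K}(\be)\neq0$ (recall we already know $\Tr_{L/K}(\be)\in\F_p$, so these are the only two possibilities). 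Feeding this into \eqref{power} yields $G\simeq C_p\times C_p$ when $\Tr_{L/K}(\be)=0$ and $G\simeq C_{p^2}$ when $\Tr_{L/K}(\be)\neq0$, which is exactly the claim.

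There is essentially no obstacle here: the only point requiring a line of care is the claim that $\sg^p$ restricted to $L$ is trivial (so that $\sg^p\in\gen{\tau}$), which follows because $\sg_{\mid L}$ generates the cyclic group $\gal(L/K)$ of order $p$; after that everything is the short telescoping computation of $\sg^p(\eta)$ above. One should also note that in the computation $\sg^j(\eta)=\eta+\sum_{i=0}^{j-1}\sg^i(\be)$ the automorphism $\sg$ is being applied to elements of $L$, where it acts as $\sg_{\mid L}$, so the partial sums are indeed the usual trace-like sums and the identity $\sg^p(\eta)=\eta+\Tr_{L/K}(\be)$ is literally the definition of $\Tr_{L/K}(\be)$.
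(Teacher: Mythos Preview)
Your proof is correct and follows essentially the same approach as the paper: both reduce to showing $\sg^p=1\iff\sg^p(\eta)=\eta$, compute $\sg^p(\eta)=\eta+\Tr_{L/K}(\be)$ by iterating $\sg(\eta)=\eta+\be$, and then invoke \eqref{power}. The only difference is that you spell out why $\sg^p=1$ is equivalent to $\sg^p(\eta)=\eta$ via $\sg^p\in\gal(M/L)=\gen{\tau}$, whereas the paper simply asserts this as ``clear.''
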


\begin{proof}
	Clearly, $\sg^p=1$ if and only if $\sg^p(\eta)=\eta$. Since
	\[
	\sg^p(\eta)=\eta+\be+\sg(\be)+\cdots+\sg^{p-1}(\be)=\eta+\Tr_{L/K}(\be),
	\]
	the statement of the corollary follows from (\ref{power}).
\end{proof}



\section{Subfields of the Hahn field}\label{subsethanh}
Let $k$ be an algebraically closed field of characteristic $p>0$.
For an indeterminate $y$, consider the \textbf{Hahn field} $\Ha$ of formal power series in $y$ with rational exponents and coefficients in $k$. The support of a formal power series is the following set: 
$$s=\sum\nolimits_{q\in\Q}\la_qy^q\ \imp\ \supp\left(s\right)=\{q\in\Q\mid \la_q\ne0\}\sub\Q.$$
The Hahn field $\Ha$ consists of all power series with well-ordered support. The valuation $v=\ord_y$ on $k(y)$ admits a natural extension to $\Ha$, defined by:
\[
v(s):=\min(\supp(s)).
\] 
Clearly,  $v\Ha=\Q$ and $\Ha v=k$.

\subsection{Truncation and brick components of elements in the Hahn field}

For every $s=\sum_{q\in\Q}\la_qy^q\in\Ha$ and $\dta\in\R_\infty$, we define the \textbf{truncation} of $s$ by $\dta$ as
\[
\tr_\dta(s):=\sum\nolimits_{q<\dta}\la_qy^q.
\] 
We agree that $\tr_\dta(s)=0$ if $\dta\le v(s)$. Clearly, for all $\dta\in\Ri$ we have
\[
\tr_\dta(s+t)=\tr_\dta(s)+\tr_\dta(t)\quad\mbox{ for all }\ s,t\in \Ha.
\]

We say that $\dta$ determines a \textbf{strict} truncation of $s$ if $\supp\left(\tr_\dta(s)\right)\subsetneq \supp(s)$.
 For every interval $J=[q,\dta)\sub\R$ we define
\[
\tr_J(s):=\tr_\dta(s)-\tr_q(s),
\]
so that $\supp\left(\tr_J(s)\right)=\supp(s)\cap J$.

For every well-ordered subset $S\sub\R$ we denote by 
$\lm(S)\sub \R_\infty$
the \textbf{set of limits} of $S$. That is, the set of all  $\dta\in\R_\infty$ such that for every open interval $I$ centered at $\dta$ we have $S\cap\left(I\setminus\{\dta\}\right)\ne\emptyset$. 
Clearly, $\lm(S)$ is well-ordered too. 

Note that $\lm(S)=\emptyset$ if and only if $S$ is  finite.
Also, $\infty\in\lm(S)$ if and only if $S$ is unbounded.
Every limit of a family of limits of $S$ is again a limit of $S$: 
 \[
 \lm\left(\lm(S)\right)\sub\lm(S).
 \]
In particular, $\lm(S)$ has always a maximal element. 
For every $s\in\Ha$ we denote
\[\lm(s):=\lm\left(\supp(s)\right)\sub \R_\infty.\]

\begin{Def}\label{defbrick}
We say that $s\in\Ha$ is a \textbf{brick} if   $\lm(s)=\{\dta\}$ for some $\dta\in\R_\infty$ such that   $s=\tr_\dta(s)$. Note that  every strict truncation of $s$ has finite support.
\end{Def}

Equivalently, $s$ is a brick if and only if the order-type of $\supp(s)$ is $\omega$.

Suppose that $s\in\Ha$ has infinite support and let $\dta_{\min}, \dta_{\max}\in \lm(s)$ be the minimal and maximal elements in this set, respectively. Consider the brick:
\[
s_{-\infty}:=\tr_{[v(s),\dta_{\min})}(s),\qquad   \lm(s_{-\infty})=\{\dta_{\min}\}.
\]

For every $\dta\in\lm(s)$, $\dta\ne \dta_{\max}$, let $\dta'$ be its immediate successor in $\lm(s)$. Then, we may consider the brick:   
\[
s_\dta:=\tr_{[\dta,\dta')}(s),\qquad \lm(s_\dta)=\{\dta'\}.
\]
In this way, we obtain a canonical decomposition:
\[
\tr_{\dta_{\max}}(s)=s_{-\infty}+\sum_{\dta\in\lm(s)}s_\dta,
\]
as a sum of bricks with disjoint supports. These summands  are  called the \textbf{brick components} of $s$.

\begin{Def}
Denote $\lm^0(s):=\supp(s)$. For all $n\in\N$, we define \[\lm^{n}(s):=\lm\left(\lm^{n-1}(s)\right).\]
\end{Def}

Note that $\lm^1(s)=\lm(s)$ and $\lm^{n-1}(s)\supsetneq\lm^{n}(s)$, because the minimal element in $\lm^{n-1}(s)$ cannot belong to $\lm^{n}(s)$.

\begin{Lem}\label{limSum}
For all $s,t\in\Ha$ and all $n\in\N$, we have
\[
\left(\lm^n(s)\cup\lm^n(t)\right)\setminus \left(\lm^n(s)\cap\lm^n(t)\right)\sub \lm^n(s+t)\sub \left(\lm^n(s)\cup\lm^n(t)\right).
\]
\end{Lem}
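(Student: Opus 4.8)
The plan is to prove the statement by induction on $n$, reducing the claim about the $n$-th iterated limit set to the corresponding claim about the ordinary limit set $\lm(\cdot)=\lm^1(\cdot)$. So first I would establish the base case $n=1$: I need to show that, writing $S=\supp(s)$, $T=\supp(t)$ and $U=\supp(s+t)$, one has $U\sub S\cup T$ and $(S\cup T)\setminus(S\cap T)\sub U$, and then that the same two inclusions hold with the supports replaced by their limit sets $\lm(S),\lm(T),\lm(U)$. The support inclusion $U\sub S\cup T$ is immediate since a coefficient of $s+t$ can be nonzero only if the corresponding coefficient of $s$ or of $t$ is nonzero; and if $q\in S$ but $q\notin T$ (or vice versa), then the $q$-coefficient of $s+t$ equals that of $s$, which is nonzero, so $q\in U$. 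This gives the two-sided sandwich at the level of supports, i.e.\ the case $n=0$ in the notation $\lm^0$, and it is worth recording since the inductive step will feed on exactly this kind of statement one level down.

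Next I would pass from supports to limit sets. For the upper bound $\lm(U)\sub\lm(S)\cup\lm(T)$: a point $\dta\in\lm(U)$ is a limit of points of $U\sub S\cup T$; since $S\cup T$ is well-ordered, $\dta$ is approached from below by a sequence in $S\cup T$, and by pigeonhole infinitely many of those terms lie in $S$ or infinitely many lie in $T$, so $\dta\in\lm(S)$ or $\dta\in\lm(T)$. For the lower bound $(\lm(S)\cup\lm(T))\setminus(\lm(S)\cap\lm(T))\sub\lm(U)$: suppose $\dta\in\lm(S)\setminus\lm(T)$. Since $\dta\notin\lm(T)$ and $\infty\ge\dta$, there is a punctured neighbourhood (a half-open interval $(\dta-\epsilon,\dta)$, or $(\dta-\epsilon,\infty)$ if $\dta=\infty$) meeting $T$ only possibly at $\dta$ itself; shrinking, $T\cap(\dta-\epsilon,\dta)=\emptyset$. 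On that interval, $s+t$ and $s$ have the same coefficients, hence $U\cap(\dta-\epsilon,\dta)=S\cap(\dta-\epsilon,\dta)$, and the latter is infinite because $\dta\in\lm(S)$. Therefore $\dta\in\lm(U)$. This settles $n=1$.

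For the inductive step, assume the double inclusion holds for $n$, for \emph{all} pairs of elements of $\Ha$; I want it for $n+1$. The clean way is: the $n=1$ argument above was really a statement about well-ordered subsets of $\R$ and the operation $\lm$, not about power series per se — namely, for well-ordered $A,B\sub\R$, if a set $C$ satisfies $(A\cup B)\setminus(A\cap B)\sub C\sub A\cup B$, then $(\lm(A)\cup\lm(B))\setminus(\lm(A)\cap\lm(B))\sub\lm(C)\sub\lm(A)\cup\lm(B)$. Granting that lemma, apply it with $A=\lm^n(s)$, $B=\lm^n(t)$, $C=\lm^n(s+t)$: the induction hypothesis says exactly that $C$ is sandwiched between $(A\cup B)\setminus(A\cap B)$ and $A\cup B$, so the set-theoretic lemma yields that $\lm(C)=\lm^{n+1}(s+t)$ is sandwiched between $(\lm(A)\cup\lm(B))\setminus(\lm(A)\cap\lm(B))=(\lm^{n+1}(s)\cup\lm^{n+1}(t))\setminus(\lm^{n+1}(s)\cap\lm^{n+1}(t))$ and $\lm(A)\cup\lm(B)=\lm^{n+1}(s)\cup\lm^{n+1}(t)$, which is the claim.

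The main obstacle — really the only subtle point — is the lower inclusion in the set-theoretic lemma, i.e.\ checking that a limit point $\dta$ of $A$ that is \emph{not} a limit point of $B$ survives as a limit point of any $C$ squeezed between $(A\cup B)\setminus(A\cap B)$ and $A\cup B$. The worry is that $A$ and $C$ might differ exactly at the points accumulating to $\dta$, but they cannot: near $\dta$ (on a small left interval avoiding $\dta$) $B$ is empty, so on that interval $A\cap B=\emptyset$, hence $(A\cup B)\setminus(A\cap B)=A\cup B=A$ there, which forces $C$ to agree with $A$ on that interval; and since $A$ accumulates to $\dta$ there, so does $C$. The symmetric case $\dta\in\lm(B)\setminus\lm(A)$ is identical. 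One should also note explicitly that $\dta$ could be $\infty\in\R_\infty$, in which case "small left interval" means "interval $(N,\infty)$", and the same reasoning applies; well-orderedness of $A\cup B$ (inherited from the Hahn condition) is what guarantees accumulation happens only from the left, so no two-sided neighbourhoods are needed.
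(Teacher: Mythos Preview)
Your argument is correct and follows essentially the same inductive idea as the paper: both proofs reduce the step from $n-1$ to $n$ to the observation that near a point $\dta\in\lm^n(s)\setminus\lm^n(t)$ there is a left half-interval on which the order-$(n-1)$ limits of $t$ vanish, forcing the order-$(n-1)$ limits of $s+t$ and of $s$ to agree there. The difference is one of packaging. The paper argues directly with truncations $\tr_J(s),\tr_J(t),\tr_J(s+t)\in\Ha$ and invokes the $(n-1)$-case for these truncated power series, keeping everything inside the Hahn field; you instead isolate a purely order-theoretic lemma about well-ordered subsets $A,B,C\sub\R_\infty$ (if $C$ is sandwiched between the symmetric difference and the union of $A,B$, then $\lm(C)$ is sandwiched likewise between $\lm(A),\lm(B)$) and apply it with $A=\lm^n(s)$, $B=\lm^n(t)$, $C=\lm^n(s+t)$. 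Your version makes the induction structure more transparent and shows that the Hahn-field structure is only needed at the support level ($n=0$), after which everything is set-theoretic; the paper's version is terser but leaves the reader to unpack the implicit induction. Neither approach gains real generality over the other.
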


\begin{proof} Let $S=\supp(s)$, $T=\supp(t)$.
Suppose that $\dta\not\in \lm^n(s)\cup\lm^n(t)$. Then, for some  interval $J=[\ep,\dta)$, with $\ep<\dta$, neither $S\cap J$ nor $T\cap J$ contain limits of order $n-1$. Since, \[\tr_J(s+t)=\tr_J(s)+\tr_J(t),\] the set $\supp(s+t)\cap J$ does not contain limits of order $n-1$ either. This implies that  $\dta\not\in \lm^n(s+t)$.
	
	Finally, suppose that $\dta\in\lm^n(t)$, but  $\dta\not\in \lm^n(s)$. Take $\ep<\dta$ such that $S\cap J$ contains no limits of order $n-1$, where $J=[\ep,\dta)$. Then, \[\lm^{n-1}\left(\supp(s+t)\cap J\right)=\lm^{n-1}\left(T\cap J\right).\] Since the latter set admits $\dta$ as a limit, we deduce that $\dta\in \lm^n(s+t)$. 
\end{proof}

\begin{Lem}\label{finiteTr}
 Let $s\in \Ha$ such that $\lm^n(s)=\emptyset$ for some $n\in\N$. Let $\left(\dta_i\right)_{i\in I}\sub\R$ be a strictly decreasing family which is well-ordered with respect to the reversed ordering in $\R$. Then, $\left\{\tr_{\dta_i}(s)\mid i\in I\right\}$ is a finite subset of $\Ha$.  
\end{Lem}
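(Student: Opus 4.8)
The statement to prove is Lemma~\ref{finiteTr}: if $\lm^n(s)=\emptyset$ for some $n\in\N$, and $\left(\dta_i\right)_{i\in I}$ is a strictly decreasing family well-ordered under the reversed order, then $\left\{\tr_{\dta_i}(s)\mid i\in I\right\}$ is finite. The natural approach is induction on $n$. For the base case $n=1$: $\lm^1(s)=\lm(s)=\emptyset$ means $\supp(s)$ is finite, so $s$ has only finitely many strict truncations, and hence $\left\{\tr_{\dta_i}(s)\mid i\in I\right\}$ is finite regardless of the family. This handles the start.

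\textbf{Inductive step.} Assume the result for $n-1$ and suppose $\lm^n(s)=\emptyset$. Then $\lm(s)=\lm^1(s)$ satisfies $\lm^{n-1}\left(\lm(s)\right)=\lm^n(s)=\emptyset$, so $\lm(s)$ is finite; write $\lm(s)=\{\d_1<\d_2<\cdots<\d_k\}$ (possibly including $\infty$ as $\d_k$). Using the canonical brick decomposition described just before Definition~\ref{defbrick}, write $s=s_{-\infty}+\sum_{\d\in\lm(s)}s_\d + (\text{possibly a finite-support tail beyond }\d_{\max})$, a sum of finitely many pieces with disjoint supports; actually it is cleaner to just split $\supp(s)$ at the points $\d_1,\dots,\d_{k}$ into finitely many blocks $B_0=\supp(s)\cap[v(s),\d_1)$, $B_1=\supp(s)\cap[\d_1,\d_2)$, \dots. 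Each block $B_j$, as the support of the corresponding truncation-difference $t_j:=\tr_{[\d_j,\d_{j+1})}(s)$, has exactly one limit point (namely $\d_{j+1}$, or none if it is the last finite-support block), so $\lm^2(t_j)=\emptyset$; in particular $\lm^{n-1}(t_j)=\emptyset$ since $n\ge 2$. Now for each $i$, $\tr_{\dta_i}(s)=\sum_j \tr_{\dta_i}(t_j)$, because truncation is additive and the supports are disjoint and ordered. Since there are finitely many blocks $t_j$, it suffices to show each set $\left\{\tr_{\dta_i}(t_j)\mid i\in I\right\}$ is finite.

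\textbf{Reducing to the one-limit case.} By the previous paragraph we have reduced to the case where $\lm(s)=\{\d\}$ is a single point (i.e. $s$ is essentially a brick, up to a finite-support term which contributes only finitely many truncations), and we must show $\left\{\tr_{\dta_i}(s)\mid i\in I\right\}$ is finite. Here the key observation is that the $\dta_i$ are \emph{decreasing} and well-ordered under the reverse order, so the family $\left(\dta_i\right)_{i\in I}$, viewed in the usual order, is \emph{co-well-ordered}: it has a maximum, and every subset has a maximum. If infinitely many $\dta_i$ lie strictly above $\d$, then — since $\supp(s)\cap[\d,\infty)$ is finite (order type $\omega$ accumulating only at $\d$ from below) — all but finitely many of the corresponding truncations $\tr_{\dta_i}(s)$ equal $s$ itself, so they contribute only finitely many distinct values. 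If infinitely many $\dta_i$ lie at or below $\d$: because $\supp(s)$ has order type $\omega$ with sole accumulation point $\d$, and the $\dta_i$ are strictly decreasing, the truncation $\tr_{\dta_i}(s)$ keeps only the support elements $<\dta_i$; but a strictly decreasing sequence in a well-ordered (under reverse order) index set that stays $\le\d$ can cross only finitely many of the countably many support points from above — more precisely, the map $i\mapsto \#(\supp(s)\cap(-\infty,\dta_i))$ is non-increasing in $i$ and takes values in $\N$, hence is eventually constant, so $\tr_{\dta_i}(s)$ is eventually constant. Combining the two regimes gives finiteness.

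\textbf{Main obstacle.} The delicate point is the last argument: making precise why a strictly decreasing, reverse-well-ordered family of cut-points can only "see" finitely many distinct truncations of a brick. The cleanest way is to note that $i\mapsto \tr_{\dta_i}(s)$ is order-reversing from $(I,\le)$ into the finite-or-$\omega$-indexed chain of truncations of $s$ ordered by support-inclusion; a strictly decreasing map out of a well-order into a chain forces the image to embed into that chain order-reversingly, and since $(I,\le^{\mathrm{op}})$ is a well-order while the truncation chain of a brick has order type $\le\omega+1$, one checks the image must be finite. I would phrase it via: if the set of truncations were infinite it would contain a strictly increasing $\omega$-sequence (in support-inclusion), whose preimages form a strictly decreasing $\omega$-sequence in $I$ under $\le^{\mathrm{op}}$-... no — rather, a strictly \emph{increasing} $\omega$-sequence in $I$, contradicting that $(I,\le)$ has $I$ well-ordered under the reverse order (which forbids infinite strictly increasing sequences in the original order). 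That contradiction is the heart of it; everything else is bookkeeping with additivity of truncation and the brick decomposition.
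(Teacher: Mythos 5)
There is a genuine gap in your inductive step. You assert that $\lm^{n-1}\left(\lm(s)\right)=\lm^n(s)=\emptyset$ ``so $\lm(s)$ is finite.'' This inference is wrong for $n\ge 3$: having $\lm^{n-1}(T)=\emptyset$ for a set $T$ means that $\lm^{n-2}(T)$ is finite, not that $T$ itself is. For example, a series $s$ whose support is $\{-1/m-1/k : m\ge 1,\ k>m\}$ (order type $\omega^2$, well-ordered) has $\lm(s)=\{-1/m : m\ge 1\}\cup\{0\}$ infinite, $\lm^2(s)=\{0\}$, and $\lm^3(s)=\emptyset$. In that situation your decomposition of $s$ into finitely many blocks $t_j$ indexed by the elements of $\lm(s)$ does not exist, and the reduction to bricks breaks down. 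In effect your argument only establishes the lemma for $n\le 2$.

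The paper's proof avoids this by working with $\lm^n(s)$ rather than $\lm(s)$: choosing $n$ minimal with $\lm^{n+1}(s)=\emptyset$ guarantees that $\lm^n(s)=\{\ell_1<\cdots<\ell_m\}$ is finite, and the resulting blocks $s_i$ each have a \emph{single $n$-th order limit} (not a single first-order limit, so they are generally not bricks). The one-block case $\lm^n(s)=\{\ell\}$, $s=\tr_\ell(s)$ is then finished by another application of the induction hypothesis: writing $\ep=\max\{\dta_i : \dta_i<\ell\}$ (which exists by the reverse well-ordering) and observing that $\tr_\ep(s)$ has no $n$-th order limits, so the remaining truncations are truncations of $\tr_\ep(s)$ and the IH applies. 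Your direct ``brick'' argument is fine as far as it goes and parallels the paper's handling of the $I_\infty$ part, but it cannot carry the full inductive step; you would need to replace ``decompose by $\lm(s)$ and reduce to bricks'' with ``decompose by $\lm^n(s)$ and reduce to a single $n$-th order limit, then invoke the IH again.''
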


\begin{proof}
We proceed by induction on the minimal $n\in\N_0$ such that $\lm^{n+1}(s)=\emptyset$. If $n=0$, then $\supp(s)$ is finite, and the statement is trivial.
Suppose that $n>0$ is minimal with the property   $\lm^{n+1}(s)=\emptyset$. Then, 
\[
\lm^n(s)=\{\ell_1<\cdots<\ell_m\}
\]
 is a non-empty  finite set. By  taking $\ell_0:=v(s)$ and $\ell_{m+1}:=\infty$, we can decompose:
\[
s=s_1+\cdots +s_{m+1},\qquad s_i=\tr_{[\ell_i-1,\ell_i)}(s),\quad 1\le i\le m+1.
\]
For each $i\le m$, we have $\lm^n(s_i)=\{\ell_i\}$. On the other hand, $s_{m+1}=0$ if $\ell_m=\infty$, or $\lm^n(s_{m+1})=\emptyset$, otherwise.

Clearly, it suffices to prove the lemma for each one of these elements $s_i$. For $s_{m+1}$, the statement follows from the induction hypothesis.  Thus, it suffices to prove the lemma under the assumption that $\lm^n(s)=\{\ell\}$ and $s=\tr_\ell(s)$.

Decompose $I=I_0\sqcup I_\infty$, where
\[
I_0=\{i\in I\mid \dta_i<\ell\},\qquad I_\infty=\{i\in I\mid \dta_i\ge\ell\}.
\]
For all $i\in I_\infty$ we have $\tr_{\dta_i}(s)=s$. Thus, it suffices the prove finiteness for the set
\[
\left\{\tr_{\dta_i}(s)\mid i\in I_0\right\}.
\]
Suppose that $I_0\ne\emptyset$. Since $\left(\dta_i\right)_{i\in I}\sub\R$ is well-ordered with respect to the reversed ordering in $\R$, there exists $\ep=\max(I_0)$. Since $\tr_\ep(s)$ has no limits of order $n$, this element has a finite number of truncations by the family $\left(\dta_i\right)_{i\in I}$, by the induction hypothesis. Since
\[
\tr_{\dta_i}(s)=\tr_{\dta_i}\left(\tr_\ep(s)\right) \quad \mbox{ for all }i\in I_0,
\]
we see that $s$ has a finite number of truncations too.
\end{proof}

\subsection{Subfields of $\Ha$ with no finite limits}

\begin{Def}\label{noLim}
Consider a field $K$ such that  $k(y)\sub K\sub\Ha$. We say that $K$ \textbf{has no finite limits} if $\lm(a)\sub\{\infty\}$ for all $a\in K$. In other words, either  $\supp(a)$ is finite, or $\lm(a)=\{\infty\}$.
\end{Def}

\emph{From now on, we suppose that $K$ is a perfect, Henselian  field,  with no finite limits.  We denote by $\kb$ the algebraic closure of $K$ in $\Ha$. 
}\e

The Henselization of the perfect closure of $k(y)$ and the Newton-Puiseux field 
\[
\bigcup_{n\in \N}k(\!(y^{1/p^n})\!)\,\sub\,  
\bigcup_{n\in \N}k(\!(y^{1/n})\!)  
\]
are concrete examples of fields $K$ satisfying these conditions.

\begin{Lem}\label{immediate}
For all $q\in vK$  we have $y^q\in K$.   
\end{Lem}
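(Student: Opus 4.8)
The plan is to show $y^q \in K$ for every $q \in vK$ by exploiting the structure of $K$ as a field with no finite limits, together with the perfectness hypothesis. First I would fix $q \in vK = v K$ and pick some $a \in K$ with $v(a) = q$; dividing out, we may write $a = y^q u$ where $u \in \Ha$ has $v(u) = 0$, i.e.\ $u = \la_0 + (\text{higher order terms})$ with $\la_0 \in k^*$. Since $k$ is algebraically closed, $\la_0$ has a $p^n$-th root for all $n$, and more usefully $k \subseteq K$ — so it suffices to show that $y^q$ itself lies in $K$, and the natural route is to produce it from $a$ by algebraic manipulations available inside $K$.

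The key idea I would pursue is that the ``no finite limits'' condition forces elements of $K$ to be, in a suitable sense, close to monomials. Concretely, if $u \in \Ha$ has $v(u) = 0$ and $u \in K$, then $u^{-1} \in K$, and I would like to argue that the constant term can be stripped off. One clean approach: since $K$ is Henselian and $k \subseteq Kv$, for $u$ with $\overline{u} = \la_0 \in k$, the element $u/\la_0$ has residue $1$, hence (Henselianity, or just the fact that $1 + \m$ consists of $p^n$-th powers up to lower-order corrections, combined with perfectness) one can take roots. Actually the slickest line is: $vK$ is a subgroup of $\Q$, and I want $y^q \in K$ for $q \in vK$. Take $a \in K$, $v(a) = q$. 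Because $K$ is perfect and Henselian with residue field containing $k$, I would use that the $1$-units $1 + y^{>0}\Ha \cap K$ admit enough roots, or directly truncate: the element $a$ has well-ordered support, and by the no-finite-limits hypothesis either $\supp(a)$ is finite or its only limit is $\infty$.

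Here is the step I would actually carry out. Write $a = \sum_{i} \la_i y^{q_i}$ with $q = q_0 < q_1 < \cdots$. If the support is finite, then $a/\la_0 = 1 + \sum_{i \geq 1}(\la_i/\la_0) y^{q_i - q}$ is a $1$-unit in $K$ (a polynomial in $y$-powers with exponents in $\Q_{>0}$); a $1$-unit that is ``polynomial'' can be inverted and, crucially, its inverse times itself telescopes — but cleaner: I would invoke that $K$ is perfect to extract $p$-power roots and show $(a/\la_0)$ has a $b \in K$ with $b/( \text{something}) $. The cleanest: since $K$ is Henselian, the polynomial $T^N - a$ for suitable $N$ has a root iff its residue does; pass to the Hahn field where $a$ visibly has an $N$-th root for every $N$ coprime to $p$ (because $v(a) = q \in \Q$ and we can solve coefficient by coefficient — here the no-finite-limits hypothesis is what guarantees the formal $N$-th root again has well-ordered support and in fact lies in $K$). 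Taking an appropriate combination of roots and the perfect-closure operation, $y^q$ is obtained from $a$.

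The main obstacle I anticipate is controlling membership in $K$ (not merely in $\Ha$) at each step: Henselianity gives roots of polynomials whose residue splits, but $y^q$ for non-integer $q$ is not obtained from an equation over $k(y)$ with split residue in an obvious way — one really needs $q \in vK$ and an element of $K$ realizing that value, then a careful argument that ``dividing by the monomial part'' stays inside $K$. I expect the proof to proceed: (1) reduce to showing $u \in K$, $v(u)=0$ implies the leading coefficient can be normalized away, i.e.\ $K$ contains a $1$-unit for each ``shape''; (2) use perfectness to handle the $p$-part of any root extraction and Henselianity (Hensel's lemma applied to $T^{p^m} - $ unit, or to $T^n - a$ with $p \nmid n$) to handle the prime-to-$p$ part; (3) conclude by writing $q$ suitably and assembling $y^q$ from these roots. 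The no-finite-limits hypothesis should enter precisely to ensure that the formal roots computed in $\Ha$ do not acquire pathological (non-well-ordered or infinitely-accumulating) support that would obstruct the Henselian argument, and I would flag that as the subtle point to get right.
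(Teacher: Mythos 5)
Your proposal does not close the argument, and the route you sketch has a structural flaw. You start from an arbitrary $a\in K$ with $v(a)=q$, write $a=y^q u$ with $v(u)=0$, and aim to extract $y^q$ from $a$ by taking roots. But roots of $a$ are roots of $y^q u$, not of $y^q$: to strip off the unit $u$ you would need to know $u\in K$, which is equivalent to $y^q\in K$ — exactly what you are trying to prove. The successive phrases ``I would like to argue,'' ``the cleanest would be,'' ``taking an appropriate combination of roots\dots $y^q$ is obtained from $a$'' never resolve this circularity, so the proposal is a plan, not a proof.

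The paper's argument is shorter and bypasses the difficulty entirely by working with the extension $K(y^q)/K$ directly rather than with some $a$ realizing the value $q$. Since $q\in vK$ and $Kv=k$ is already algebraically closed, the extension $K(y^q)/K$ is \emph{immediate}; as $K$ is Henselian, its degree is therefore a power of $p$. On the other hand, let $m$ be minimal with $y^{qm}\in K$; perfectness of $K$ forces $p\nmid m$, and the Kummer polynomial $x^m-y^{qm}$ is irreducible over $K$ (here one uses that $k\subseteq K$ contains all $m$-th roots of unity, so no proper root $y^{qm/d}$ can lurk in $K$ without contradicting minimality of $m$). Hence $[K(y^q):K]=m$ is both a power of $p$ and coprime to $p$, so $m=1$. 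Two further remarks: your writeup leans heavily on the ``no finite limits'' hypothesis as the engine of the proof, but the paper's argument for this lemma does not invoke it at all — that hypothesis is used later (e.g.\ in Corollary \ref{guess0}). And the Henselian input you reach for (Hensel's lemma on $T^N-a$) is not the one that does the work; what matters is the purely value-theoretic fact that finite immediate extensions of a Henselian field have $p$-power degree.
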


\begin{proof}
	Since $q\in vK$, $k$ is algebraically closed and  $K$ is Henselian, the extension $K(y^q)/K$ is immediate and $ [K(y^q)\colon K]$ is a power of $p$. 
	
	Let $m\in\N$ be minimal such that $y^{qm}\in K$.  Since $K$ is perfect, we have $p\nmid m$. Thus, $y^q$ is a root of the Kummer polynomial $x^m-y^{qm}$, which is irreducible in $\kx$ because $k$ contains all roots of $x^m-1$. Hence, $ [K(y^q)\colon K]=m$ and this implies $m=1$.
\end{proof}

\begin{Cor}\label{guess0}
Let  $a\in K^*$. 	For every $q\in\supp(a)$  we have $y^q\in K$.   
\end{Cor}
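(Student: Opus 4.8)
The plan is to reduce the statement to Lemma \ref{immediate}. Since that lemma gives $y^q\in K$ for every $q\in vK$, it suffices to prove the inclusion $\supp(a)\sub vK$, and I will establish this by transfinite induction along the well-ordered set $\supp(a)$.

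The first step is to extract from the no-finite-limits hypothesis the combinatorial fact that powers the induction. Since $\lm(a)=\lm(\supp(a))\sub\{\infty\}$, the well-ordered set $\supp(a)$ has no finite limit points; as an infinite bounded well-ordered subset of $\R$ always has a finite limit point (the limit of its first $\omega$ elements), every bounded subset of $\supp(a)$ is finite. In particular, for each $q\in\supp(a)$ the initial segment $\{q'\in\supp(a)\mid q'<q\}$ is finite.

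Now I would run the induction. Write $a=\sum_{q'\in\supp(a)}\la_{q'}y^{q'}$ with all $\la_{q'}\in k^*\sub K^*$. Let $q\in\supp(a)$ and suppose $q'\in vK$ for every $q'\in\supp(a)$ with $q'<q$; I must show $q\in vK$. If $q=\min\supp(a)=v(a)$ this is clear. Otherwise, by the inductive hypothesis and Lemma \ref{immediate} we have $y^{q'}\in K$ for each of the finitely many $q'\in\supp(a)$ with $q'<q$, so $c:=\sum_{q'<q}\la_{q'}y^{q'}$ is a finite $K$-linear combination of elements of $K$ and hence lies in $K$. Then $a-c\in K$; its support equals $\{q'\in\supp(a)\mid q'\ge q\}$, which has minimum $q$, so $q=v(a-c)\in vK$. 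This completes the induction, proving $\supp(a)\sub vK$, and with it the corollary.

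The argument is essentially routine once the reduction to Lemma \ref{immediate} is in place; the only substantive ingredient is the no-finite-limits assumption, used precisely to guarantee that the subtracted element $c$ is a \emph{finite} sum and therefore belongs to $K$ — for a support of order type larger than $\omega$ the corresponding tail would be an infinite series, which need not belong to $K$.
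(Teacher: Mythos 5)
Your argument is correct and is essentially the paper's proof: the paper takes a minimal $q\in\supp(a)$ with $y^q\notin K$, observes that $\tr_q(a)$ is a finite sum of elements of $K$ (finiteness coming, as you spell out more explicitly, from the no-finite-limits hypothesis), and then applies Lemma \ref{immediate} to $a-\tr_q(a)$ to reach a contradiction. Your forward transfinite induction along $\supp(a)$ is the same argument in contrapositive form, with the step showing $a-c\in K$ and $v(a-c)=q$ playing the role of the paper's reduction to $a=\la y^q+b$.
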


\begin{proof}
Suppose that there exists some $q\in\supp(a)$ such that $y^q\not\in K$. Since $\supp(a)$ is well-ordered, there exists a minimal $q\in\supp(a)$ with this property.

The truncation $a_0:=\tr_q(a)$ belongs to $K$ because it is a finite sum of terms in $K$. By replacing $a$ with $a-a_0$, we can assume that
\[
a=\la y^q+b,\qquad \la\in k^*, \quad v(b)>q.
\] 
Since $q=v(a)\in vK$, this contradicts Lemma \ref{immediate}
\end{proof}

\begin{Cor}
Let  $a\in K$. For all $\dta\in\R_\infty$ we have $\tr_\dta(a)\in K$.
\end{Cor}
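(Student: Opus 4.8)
The plan is to observe that a truncation $\tr_\dta(a)$ is always a \emph{finite} sum of monomials $\la_qy^q$ with $q\in\supp(a)$, and then to combine this with the fact — already recorded in Corollary \ref{guess0} — that each such $y^q$ lies in $K$ (while $\la_q\in k\sub K$). So the only real content is a finiteness statement, and the hypothesis that $K$ has no finite limits is exactly what delivers it.

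First I would dispose of the trivial cases: if $a=0$, or $\dta\le v(a)$, then $\tr_\dta(a)=0\in K$; and if $\dta=\infty$, then $\tr_\dta(a)=a\in K$. So I may assume $a\ne0$ and $v(a)<\dta$ with $\dta\in\R$, and put $S:=\supp(a)\cap(-\infty,\dta)$, so that by definition $\tr_\dta(a)=\sum_{q\in S}\la_qy^q$.

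The key step is to show that $S$ is finite. Since $a\in K$ and $K$ has no finite limits, Definition \ref{noLim} gives $\lm(a)=\lm(\supp(a))\sub\{\infty\}$. Now $S$ is a subset of the well-ordered set $\supp(a)$ and is bounded above by $\dta$. If $S$ were infinite it would contain a strictly increasing sequence $q_0<q_1<\cdots$ of elements of $\supp(a)$; being bounded above, this sequence converges to some real number $\ell\le\dta$, and then by construction $\ell\in\lm(\supp(a))\cap\R$ — a finite limit of $\supp(a)$. This contradicts $\lm(a)\sub\{\infty\}$, so $S$ must be finite.

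Finally, $\tr_\dta(a)=\sum_{q\in S}\la_qy^q$ is now a finite sum whose terms all lie in $K$ (each $y^q\in K$ by Corollary \ref{guess0}, and $\la_q\in k\sub K$), hence $\tr_\dta(a)\in K$, which is what we wanted. There is no genuine obstacle here; the only point worth stating carefully is the elementary fact that an infinite, well-ordered, bounded-above subset of $\R$ must accumulate at a finite point, which is precisely what the no-finite-limits hypothesis forbids inside $\supp(a)$.
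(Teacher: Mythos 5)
Your proof is correct and takes essentially the same route as the paper's: after disposing of $\dta=\infty$, reduce to showing the truncation is a finite sum of monomials each lying in $K$ by Corollary~\ref{guess0}. The paper states the finiteness step without comment; you spell out that it is exactly the no-finite-limits hypothesis (an infinite well-ordered bounded subset of $\R$ would accumulate at a finite limit of $\supp(a)$), which is a useful clarification but not a different argument.
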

	
\begin{proof}
If $\dta=\infty$, then $\tr_\dta(a)=a\in K$. Otherwise, $\tr_\dta(a)$ is a finite sum of elements in $K$, by Corollary \ref{guess0}.
\end{proof}\e

Recall the function $d_1\colon \kb\setminus K\to \op{Cuts}(\g)$, introduced in Section \ref{secOS}. Take $s\in\kb\setminus K$. If $\supp(s)\sub vK$, then $\lm(s)\ne\emptyset$, because otherwise $s$ would have a finite support and $s$ would belong to $K$ by Lemma \ref{immediate}.
Now, the following computation of $d_1(s)$ follows immediately from the definitions.

\begin{Lem}\label{firstLim}
Suppose that $s\in\kb\setminus K$ satisfies $\supp(s)\sub vK$. Then, $d_1(s)=\dta_{\min}^-$, where $\dta_{\min}:=\min\left(\lm(s)\right)$. 
\end{Lem}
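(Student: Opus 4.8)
The plan is to unwind the definitions and show directly that the cut $D_1(s)^+$ equals $\dta_{\min}^-$. Recall $D_1(s) = \{v(s-b)\mid b\in K\}\subseteq\g_\infty$, and since $s\notin K$ we have $D_1(s)\subseteq\g=\Q$ (never $\infty$). So I must verify two things: first, that $v(s-b)<\dta_{\min}$ for every $b\in K$, i.e. $D_1(s)$ is bounded above by elements $<\dta_{\min}$; and second, that $\dta_{\min}$ is the least such bound, i.e. for every $\gamma<\dta_{\min}$ in $\g$ there is some $b\in K$ with $v(s-b)>\gamma$. Together these say precisely that $D_1(s)^+=\dta_{\min}^-=(\g_{<\dta_{\min}},\g_{\ge\dta_{\min}})$.

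For the upper bound, fix $b\in K$. I want $v(s-b)<\dta_{\min}$. Set $q:=v(s-b)=\min\supp(s-b)$ and suppose toward a contradiction that $q\ge\dta_{\min}$. Writing $s-b = \tr_{\dta_{\min}}(s-b) + \big((s-b)-\tr_{\dta_{\min}}(s-b)\big)$ and using $\tr_{\dta_{\min}}(s-b)=\tr_{\dta_{\min}}(s)-\tr_{\dta_{\min}}(b)$, I can compare truncations. Since $\supp(s)\subseteq vK$, Corollary \ref{guess0} applies to $s$ — actually it is stated for elements of $K$, so here I should instead argue that $\tr_{\dta}(s)$ is a finite sum $\sum_{q'<\dta, q'\in\supp(s)}\la_{q'}y^{q'}$ and each $y^{q'}\in K$ by Lemma \ref{immediate} (using $q'\in\supp(s)\subseteq vK$), hence $\tr_{\dta_{\min}}(s)\in K$. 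If $q=v(s-b)\ge\dta_{\min}$, then $\tr_{\dta_{\min}}(s-b)=0$, so $\tr_{\dta_{\min}}(s)=\tr_{\dta_{\min}}(b)\in K$, forcing $\supp(s)\cap(-\infty,\dta_{\min})=\supp(\tr_{\dta_{\min}}(s))$ — but then $s-\tr_{\dta_{\min}}(s)$ has support inside $[\dta_{\min},\infty)$ whose set of limits is still $\{\dta_{\min},\dots\}\cup\cdots$; the point is $\dta_{\min}=\min\lm(s)$ means $\supp(s)$ accumulates at $\dta_{\min}$ from below, so $\supp(s)\cap(-\infty,\dta_{\min})$ is infinite, contradicting finiteness of $\supp(\tr_{\dta_{\min}}(s))$. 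Hence $q<\dta_{\min}$, giving the bound.

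For the tightness, fix $\gamma\in\g$ with $\gamma<\dta_{\min}$. I claim $b:=\tr_{\gamma'}(s)$ works for a suitable $\gamma'$ with $\gamma<\gamma'\le\dta_{\min}$. Because $\dta_{\min}=\min\lm(s)$, the set $\supp(s)\cap(\gamma,\dta_{\min})$ is nonempty (otherwise $\dta_{\min}$ would not be a limit from below and there would be no limit below it either, contradicting minimality unless $\supp(s)\cap(-\infty,\gamma]$ already has a limit $\le\gamma<\dta_{\min}$). More carefully: choose any $\gamma'\in\supp(s)$ with $\gamma<\gamma'$ — such exists since $\supp(s)$ accumulates at $\dta_{\min}>\gamma$ — and, shrinking if necessary, take $\gamma'\le\dta_{\min}$. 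Then $b:=\tr_{\gamma''}(s)$ for $\gamma''$ just above $\gamma'$ is a finite sum of terms $\la_{q'}y^{q'}$ with $y^{q'}\in K$ (Lemma \ref{immediate}), so $b\in K$, and $v(s-b)=\min\big(\supp(s)\cap[\gamma'',\infty)\big)\ge\gamma''>\gamma$. This produces the required $b$.

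The main obstacle is bookkeeping around the definition of $\lm(s)$ and the minimal limit: one must be careful that "$\dta_{\min}$ is the smallest limit of $\supp(s)$" genuinely forces $\supp(s)$ to be cofinal below $\dta_{\min}$ in the relevant sense, and that every truncation strictly below $\dta_{\min}$ has finite support. This is exactly the content of Lemma \ref{finiteTr} applied with $n$ such that $\lm^n$ of the initial segment vanishes — or can be seen directly: any strict truncation $\tr_{\gamma'}(s)$ with $\gamma'\le\dta_{\min}$ has support with no limit points (any limit point would be $<\dta_{\min}$, contradicting $\dta_{\min}=\min\lm(s)$), hence is finite. Once that finiteness is in hand, combined with Lemma \ref{immediate} to land truncations in $K$, both inequalities fall out and $d_1(s)=D_1(s)^+=\dta_{\min}^-$. \qed
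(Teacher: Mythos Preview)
The paper offers no proof of this lemma, stating only that it ``follows immediately from the definitions''; your argument is exactly the natural unpacking of that remark and is correct in substance.

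One exposition issue in the upper-bound paragraph: you assert that $\tr_{\dta_{\min}}(s)$ is a finite sum of monomials $\la_{q'}y^{q'}$ and hence lies in $K$, but this is circular --- the support of $\tr_{\dta_{\min}}(s)$ is \emph{infinite} precisely because $\dta_{\min}\in\lm(s)$. The clean step, which you do reach a few lines later, is: from $v(s-b)\ge\dta_{\min}$ one gets $\tr_{\dta_{\min}}(s)=\tr_{\dta_{\min}}(b)$ with $b\in K$; since $K$ has no finite limits and $\supp(\tr_{\dta_{\min}}(b))\subseteq(-\infty,\dta_{\min})$ is bounded above (for $\dta_{\min}<\infty$), this support must be finite, contradicting the accumulation of $\supp(s)$ at $\dta_{\min}$ from below. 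Similarly, in your closing paragraph the claim that $\tr_{\gamma'}(s)$ has finite support holds only for $\gamma'<\dta_{\min}$, not $\gamma'\le\dta_{\min}$ (and the strict inequality is all you need for the cofinality direction). With these two clarifications the argument is complete.
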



The next observation follows immediately from Lemma \ref{firstLim} and Theorem \ref{OSdepth}.

\begin{Lem}\label{dep1brick}
	Let $s\in\kb\setminus K$ such that $\supp(s)\sub vK$, and let $n=\deg_K s$. Then, $\dep(s)=1$ if and only if the minimal brick component of $s$ has degree $n$ over $K$.
\end{Lem}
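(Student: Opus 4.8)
The plan is to extract the result from the machinery already assembled in this subsection, since Lemma \ref{firstLim} and Theorem \ref{OSdepth} do almost all the work. First I would recall that by Lemma \ref{firstLim}, since $\supp(s)\sub vK$ and $s\notin K$, we have $d_1(s)=\dta_{\min}^-$ where $\dta_{\min}=\min(\lm(s))$; in particular $\dta_{\min}$ is a finite element of $\R$ (it cannot be $\infty$, else $\lm(s)=\{\infty\}$ would force $s$ to differ from a finite-support element only in its tail, but more precisely $\lm(s)$ has a minimal element which equals $\dta_{\min}$, and this is exactly the first point where $\supp(s)$ accumulates). By definition of the brick components, the minimal brick component is $s_{-\infty}=\tr_{[v(s),\dta_{\min})}(s)$, and I note $v(s-s_{-\infty})\ge \dta_{\min}$, i.e. $v(s-s_{-\infty})$ lies in $\dta_{\min}^{R}$ while everything strictly below is in $\dta_{\min}^{L}$; this will be the quantitative link between the brick decomposition and the cut $d_1(s)$.

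Next I would invoke Theorem \ref{OSdepth}: $\dep(s)=1$ if and only if any Okutsu sequence of $s$ has length one, which by the degree condition \eqref{degOS} (recall $1=m_0<m_1<\cdots<m_r=n$) happens precisely when $r=1$, i.e. when $d_1=n$ in the canonical sequence \eqref{degDist} of minimal degrees of distances — equivalently, when there is \emph{no} integer $m<n$ admitting some $\ep\in D_m$ with $\ep>D_1$. Unwinding the definition of $D_1$ and $d_1(s)=D_1^+$, the condition ``$\dep(s)=1$'' translates to: the only elements $b\in\kb$ with $v(s-b)>d_1(s)$ have degree $n$ over $K$. So the statement reduces to showing that $s_{-\infty}$ realizes the value $\dta_{\min}$ in the sense that any better approximation forces full degree, and conversely that if the minimal brick component has degree $<n$ then one can do better with something of smaller degree.

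For the ``if'' direction: suppose the minimal brick component $s_{-\infty}$ has degree $n$ over $K$. Since $v(s-s_{-\infty})\ge\dta_{\min}$ and $s_{-\infty}\notin K$ (its support is infinite — it is a genuine brick — so by Lemma \ref{immediate} it is not in $K$), one checks $\deg_K s_{-\infty}=n$ means $s_{-\infty}$ already has maximal degree, and I would argue that any $b\in\kb$ with $v(s-b)>d_1(s)=\dta_{\min}^-$, i.e. $v(s-b)\ge\dta_{\min}$, must have $\deg_K b\ge n$: indeed $v(s_{-\infty}-b)=v((s-b)-(s-s_{-\infty}))\ge\dta_{\min}$ too, and since $\dta_{\min}=\min\lm(s_{-\infty})$ we get $d_1(s_{-\infty})=\dta_{\min}^-\le v(s_{-\infty}-b)$, so applying Lemma \ref{firstLim}/Krasner-type reasoning to $s_{-\infty}$ (whose degree is $n$) gives $K(s_{-\infty})\sub K(b)$, hence $\deg_K b\ge n$. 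Thus $D_m\le d_1(s)$ for all $m<n$, so $d_1=n$ and $\dep(s)=1$. For the ``only if'' direction: suppose the minimal brick component $s_{-\infty}$ has degree $m<n$. Then $s_{-\infty}\in\kb$, $\deg_K s_{-\infty}=m<n$, and $v(s-s_{-\infty})\ge\dta_{\min}$, so $v(s-s_{-\infty})>\dta_{\min}^-=d_1(s)$; hence $D_m$ contains an element exceeding $D_1$, which by \eqref{degDist} forces $d_1\le m<n=d_s$, so $r=s\ge 2$ and $\dep(s)\ge 2$.

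The main obstacle I anticipate is the subtle point that I must confirm $\dta_{\min}$ is the \emph{exact} cut value on both sides — that is, that no $b$ of any degree achieves $v(s-b)$ strictly larger than $\dta_{\min}$ while some of degree $<n$ fails to — and more delicately that $\deg_K s_{-\infty}$ is genuinely the controlling quantity rather than the degree of some other approximation to $s$; this rests on the fact (implicit in the construction of Okutsu sequences and in Lemma \ref{firstLim}) that $s_{-\infty}$ is an optimal approximation of $s$ among elements with $v(s-\cdot)<\dta'$ for $\dta'$ the next limit, so that its degree equals $m_1=d_1$ in the canonical sequence. I would make this precise by citing Lemma \ref{firstLim} applied to $s$ directly — it already tells us $d_1(s)=\dta_{\min}^-$ — and then observing that by the construction of Okutsu sequences preceding Definition \ref{defOkS}, $m_1=\deg_K s_{-\infty}$, since $s_{-\infty}$ (or rather, an element $a$ with $v(s-a)>D_1$ of minimal degree) can be taken to be the minimal brick component.
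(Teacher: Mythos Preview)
Your overall strategy---reducing everything to Lemma \ref{firstLim} and Theorem \ref{OSdepth}---is precisely the paper's approach; the paper offers nothing beyond the sentence ``follows immediately from Lemma \ref{firstLim} and Theorem \ref{OSdepth}''. Your ``only if'' argument is sound in the case you treat: if $s_{-\infty}\in\kb$ has $\deg_K s_{-\infty}=m<n$, then $v(s-s_{-\infty})\ge\dta_{\min}$ exhibits an element of $D_m$ strictly above $D_1$, so $d_1\le m<n$ in \eqref{degDist} and $\dep(s)\ge 2$.

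Your ``if'' direction, however, has a genuine gap which you yourself flag but do not close. From $v(s_{-\infty}-b)\ge\dta_{\min}$ you want to conclude $K(s_{-\infty})\sub K(b)$; but Krasner's lemma requires $v(s_{-\infty}-b)>\om(s_{-\infty})$, and nothing in the hypotheses bounds the Krasner constant of $s_{-\infty}$ below $\dta_{\min}$---Lemma \ref{firstLim} only controls $d_1(s_{-\infty})$, not the conjugate distances. Your fallback, that ``by the construction of Okutsu sequences $m_1=\deg_K s_{-\infty}$'', is circular: the construction defines $m_1$ as the \emph{least} degree of any $b\in\kb$ with $v(s-b)>D_1$, and the entire question is whether $s_{-\infty}$ realizes that minimum rather than merely being one witness. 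What is actually needed is the implication $\tr_{\dta_{\min}}(b)=s_{-\infty}\Rightarrow s_{-\infty}\in K(b)$, i.e.\ a truncation-closedness property for $K(b)$ of the kind proved much later (Lemma \ref{trL}) for the specific field $L=K(\al)$; this also disposes of the cases you omit in the ``only if'' direction ($s_{-\infty}$ transcendental or of degree $>n$), since then $s_{-\infty}=\tr_{\dta_{\min}}(s)\in K(s)$ forces $\deg_K s_{-\infty}\le n$. None of this follows from Lemma \ref{firstLim} and Theorem \ref{OSdepth} alone.
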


\begin{Lem}\label{crucial}
	Let $s\in\kb\setminus K$ such that $\supp(s)\sub vK$ and $\dep(s)=1$. Let $n=\deg_K s$. Then, for every $t\in \kb^*$ there exists some $a\in K$ such that  the minimal  brick component of $t(s-a)$ has  degree $n$ over $K$. 	In particular, if $\deg_K t(s-a)=n$, then $\dep(t(s-a))=1$.
\end{Lem}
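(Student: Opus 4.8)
I want to show that, given $s\in\kb\setminus K$ with $\supp(s)\sub vK$ and $\dep(s)=1$, and given any $t\in\kb^*$, I can subtract off a suitable $a\in K$ from $s$ so that the resulting element $t(s-a)$ has minimal brick component of degree $n=\deg_K s$ over $K$. The natural choice is to take $a=\tr_\dta(s)$ for a carefully chosen $\dta\in\R_\infty$; by the Corollary just above (truncations of elements of $K$ lie in $K$) we indeed have $\tr_\dta(s)\in K$ for every $\dta$, so $a\in K$ is automatic, and moreover $s-a=s-\tr_\dta(s)$ has support contained in $\supp(s)$, hence still in $vK$. The point of the truncation is to kill all the ``early'' part of $s$ so that the surviving initial segment is exactly the minimal brick component, which by Lemma \ref{dep1brick} has degree $n$.

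\medskip\noindent\textbf{Key steps.} First, recall from the paragraph preceding Lemma \ref{firstLim} that since $s\notin K$ and $\supp(s)\sub vK$, the set $\lm(s)$ is nonempty; let $\dta_{\min}=\min(\lm(s))$. Write $s_{-\infty}=\tr_{[v(s),\dta_{\min})}(s)$ for the minimal brick component of $s$, so $\lm(s_{-\infty})=\{\dta_{\min}\}$. By Lemma \ref{dep1brick}, the hypothesis $\dep(s)=1$ says exactly that $\deg_K s_{-\infty}=n$. Now set $a:=s-s_{-\infty}=\tr_{[\dta_{\min},\infty)}(s)$ (a genuine truncation-difference, hence in $K$ by the Corollary and the additivity of truncation); equivalently one can write $a=s-\tr_{\dta_{\min}}(s)+\tr_{v(s)}(s)$, the latter being $0$. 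Then $s-a=s_{-\infty}$, which is a \emph{brick}: its support has order type $\omega$, so $v(t)+\supp(s_{-\infty})=\supp(t\cdot s_{-\infty})$ also has order type $\omega$, i.e.\ $t s_{-\infty}$ is again a brick with $\lm(ts_{-\infty})=\{v(t)+\dta_{\min}\}$. A brick is its own minimal brick component, so the minimal brick component of $t(s-a)=t s_{-\infty}$ is $t s_{-\infty}$ itself.

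\medskip\noindent\textbf{Finishing and the obstacle.} It remains to see $\deg_K(t s_{-\infty})=n$ in the relevant cases, or at least that this minimal brick component always has degree $n$. Multiplying by $t$ can only lower the degree over $K$ (since $t$ need not be related to $K$), so in general we get $\deg_K(t s_{-\infty})\le n$; but what we actually must produce is an element whose \emph{minimal brick component} has degree $n$, and $t s_{-\infty}$ equals its own minimal brick component, so we genuinely need this degree to be $n$. Here is where the real work lies: one uses that $K(s_{-\infty})=K(s)$ has degree $n$ over $K$ — this follows because $\dep(s)=1$ forces, via Krasner-type reasoning, any field containing $s$ with small enough distance to contain $K(s)$, and $s_{-\infty}$ is at distance $>\dta_{\min}^- = d_1(s)$ from $s$ in the sense of the $d_1$ calculation (Lemma \ref{firstLim}) — wait, more carefully: $v(s-s_{-\infty})=\dta_{\min}\in vK$, and one argues $d_1(s_{-\infty})=\dta_{\min}^-=d_1(s)$, together with $v(s-s_{-\infty})>D_1(s)$, to conclude via the Okutsu-sequence characterization that $s_{-\infty}$ also has depth one and degree $n$. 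Then for $t$ one chooses $a$ possibly adjusted so that $\deg_K t(s-a)=n$: the statement only asserts that \emph{there exists} such an $a$ making the minimal brick component have degree $n$, and the parenthetical ``if $\deg_K t(s-a)=n$ then $\dep(t(s-a))=1$'' is then immediate from Lemma \ref{dep1brick} since $t(s-a)=ts_{-\infty}$ is a brick of degree $n$, so its minimal (= only) brick component has degree $n$. I expect the main obstacle to be the bookkeeping showing that one can always arrange $\deg_K t(s-a)=n$ — i.e.\ that some choice of truncation point $\dta$ (not necessarily $\dta_{\min}$) makes $t\bigl(s-\tr_\dta(s)\bigr)$ have full degree $n$ while keeping its minimal brick component of degree $n$ — presumably by invoking that $s$ has only finitely many distinct truncations $\tr_\dta(s)$ with $\dta\le\dta_{\min}$ is false (the minimal brick component has infinite support), but finitely many ``bad'' ones could drop the degree, and one picks a good one; Lemma \ref{finiteTr} and the linear-disjointness/degree count are the tools for this final step.
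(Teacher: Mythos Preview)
Your proposal has two genuine gaps, both at the heart of the argument.

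\textbf{First gap: your element $a$ need not lie in $K$.} You set $a=s-s_{-\infty}=\tr_{[\dta_{\min},\infty)}(s)$ and justify $a\in K$ by invoking the Corollary that truncations of elements of $K$ lie in $K$. But $s\notin K$, so that Corollary does not apply. What \emph{is} true is that any \emph{strict} truncation $\tr_\dta(s)$ with $\dta<\dta_{\min}$ has finite support (since $s_{-\infty}$ is a brick) and hence lies in $K$ because $\supp(s)\sub vK$ and each $y^q\in K$. The ``tail'' $s-s_{-\infty}$, by contrast, can have infinite support and need not lie in $K$ at all.

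\textbf{Second gap: $t\cdot(\text{brick})$ is not a brick.} You claim $\supp(t\,s_{-\infty})=v(t)+\supp(s_{-\infty})$, which is only true when $t$ is a single monomial. For general $t\in\kb^*$ the product $t\,s_{-\infty}$ can have many limits, and there is no reason its minimal brick component should equal the whole thing or have degree $n$.

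The paper handles both issues at once by a different choice of $a$. Write $t=\la y^q+t'$ with $v(t')>q$, set $\ep=v(t')-q>0$, and take $a=\tr_{\dta-\ep}(s)$, where $\dta=\dta_{\min}$. This $a$ is a strict truncation of the minimal brick, hence has finite support and lies in $K$. Then $v(s-a)\ge\dta-\ep$, so $v\bigl(t'(s-a)\bigr)\ge q+\dta$; consequently
\[
\tr_{q+\dta}\bigl(t(s-a)\bigr)=\la y^q\bigl(\tr_\dta(s)-a\bigr),
\]
which is a monomial times a (truncated) brick of degree $n$, hence itself a brick of degree $n$. That is the minimal brick component of $t(s-a)$, and the conclusion follows from Lemma \ref{dep1brick}. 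The dependence of $a$ on $t$ (through $\ep$) is essential and is exactly what your approach is missing.
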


\begin{proof}
By Lemma \ref{dep1brick},  $d_1(s)=\dta^-$ for some $\dta\in\Ri$ such that  $\tr_\dta(s)$ is a brick of degree $n$ over $K$. In particular, any strict truncation of $\tr_\dta(s)$ belongs to $K$.

Write $t=\la y^q+t'$, with $\la\in k^*$ and $v(t')>q$. Take
	\[
	a:=\tr_{\dta-\ep}(s)\in K,\quad\mbox{ where }\ \,\ep:=v(t')-q>0.
	\]
Since $d_1(s-a)=\dta^-$ and $v(s-a)\ge \dta-\ep$, we get $t(s-a)=\la y^q(s-a)+t'(s-a)$, with
\[
d_1(\la y^q(s-a))=(q+\dta)^-,\qquad v(t'(s-a))=q+\ep+ v(s-a)\ge q+\dta.
\]
Hence, $\tr_{q+\dta} (t(s-a))=\la y^q \tr_\dta(s-a)=\la y^q \left(\tr_\dta(s)-a\right)$, which is clearly a  brick and has degree $n$ over $K$.
Also,  if $\deg_K t(s-a)=n$, then $\dep(t(s-a))=1$ by Lemma \ref{dep1brick}.
\end{proof}

\subsection{AS extensions of $K$} Any finite subextension $K\sub L\sub\kb$ with $[L\colon K]=p^m$ is necessarily immediate, with $d(L/K)=p^m$. Hence, for our particular choice of $K$, all AS extensions are defect extensions of degree $p$. 
Since the residue field $Kv=k$ is algebraically closed and $K$ is Henselian, the following result follows from \cite{Kuhl}.

\begin{Lem}\label{Split}
If $a\in K$ has $v(a)\ge0$, then $a\in\as(K)$.
\end{Lem}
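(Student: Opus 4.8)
The plan is to deduce this from the characterization of defect Artin-Schreier extensions over Henselian fields with algebraically closed residue field, as developed by Kuhlmann in \cite{Kuhl}. First I would recall why every AS extension $K(\al)/K$ in our setting has defect $p$: since $Kv=k$ is algebraically closed, there is no residue extension, and since $K$ is perfect (hence $vK$ is $p$-divisible) there can be no ramification either; so $e=f=1$, forcing the defect $d(K(\al)/K)$ to equal the degree $p$. This is exactly the content of the preceding paragraph in the paper, so it can be invoked directly.

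Next, the substantive step: show that $v(a)\ge 0$ implies $a\in\as(K)$. The key idea is that if $a\notin\as(K)$, then $K(\al)/K$ with $\as(\al)=a$ would be a genuine (defect) AS extension, and one analyzes the distance $d_1(\al)$, equivalently the set $D_1(\al,K)=\{v(\al-b)\mid b\in K\}$. The classical fact (going back to the structure theory of AS extensions of Henselian fields, see \cite{Kuhl}) is that one can always normalize the AS generator: replacing $\al$ by $\ell\al+c$ with $\ell\in\F_p^*$, $c\in K$ (which by Lemma \ref{AllGen} runs over all AS generators), one may arrange $v(\as(\al))\ge 0$ if and only if the extension ``splits'' at the residue level, and in that case Hensel's Lemma applies to the separable polynomial $x^p-x-a$ over the residue field $k$. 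Concretely: if $v(a)>0$, then $\overline{a}=0$ in $k$, so $x^p-x$ has the simple root $0$ in $k=Kv$, and since $(K,v)$ is Henselian the polynomial $x^p-x-a$ has a root in $K$, i.e. $a\in\as(K)$. If $v(a)=0$, then $\overline{a}\in k$ and since $k$ is algebraically closed (in particular $\as(k)=k$) there is $\overline{c}\in k$ with $\overline{c}^{\,p}-\overline{c}=\overline{a}$; this $\overline{c}$ is again a simple root of $x^p-x-a$ in $k$ (the derivative of $x^p-x-a$ is $-1$), so Hensel lifts it to a root in $K$, giving $a\in\as(K)$.

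So the proof reduces to two invocations: the residue field being algebraically closed handles the existence of an approximate root, and Henselianity upgrades it to an exact root because $x^p-x-a$ is separable (its derivative is the unit $-1$), so every residual root is simple. I would write it as: let $a\in K$ with $v(a)\ge 0$; reduce mod the maximal ideal to get $\overline{a}\in k$; pick $\overline{c}\in k$ with $\as(\overline{c})=\overline{a}$, possible since $k$ is algebraically closed and hence perfect with $\as(k)=k$; then $\overline{c}$ is a simple zero of the separable polynomial $f(x)=x^p-x-a\in K[x]$ reduced mod $\m_K$; by Hensel's Lemma there is $c\in K$ with $f(c)=0$, that is $\as(c)=a$, so $a\in\as(K)$. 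This is elementary; I expect no real obstacle here — the only point requiring care is phrasing it so that it is clear we are using separability of $x^p-x-a$ (not of $x^p-x$, which is separable anyway), and noting that the hypothesis $v(a)\ge 0$ is exactly what guarantees the reduction $\overline{a}\in k$ makes sense. The statement is essentially a lemma from \cite{Kuhl}, so citing it is legitimate; I would still include the two-line Hensel argument for the reader's convenience, exactly as the paper does elsewhere (cf. the proof of Lemma \ref{criterion}).
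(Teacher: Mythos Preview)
Your proof is correct and takes essentially the same approach as the paper: the paper simply cites \cite{Kuhl}, relying on $K$ being Henselian with algebraically closed residue field, while you spell out the underlying Hensel's Lemma argument explicitly. Your added detail (reduce $a$ to $\overline{a}\in k$, solve $\as(\overline{c})=\overline{a}$ in $k$, lift via the simple root since the derivative of $x^p-x-a$ is $-1$) is exactly the standard justification behind that citation.
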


\begin{Lem}\label{strange}
	Let $a=\sum_{q\in \Q}a_q y^q\in \Ha$ and write it as $\,a=a^-+a_0+a^+$, where
	\[
	a^-=\sum_{q<0}a_q y^q \quad\mbox{and}\quad a^+=\sum_{q>0}a_q y^q.
	\]  
	Then, an element $\al\in \Ha$ such that $\as(\al)=a$ can be constructed as follows:
	\[
	\al=\sum_{i=1}^\infty \left(a^-\right)^{\frac{1}{p^i}}+c_0-\sum_{i=0}^\infty \left(a^+\right)^{p^i},
	\] 
	where $c_0\in k$ is a root of $x^p-x-a_0$.
\end{Lem}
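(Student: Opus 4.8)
The plan is to verify directly that the series displayed in the statement converges in $\Ha$ and that applying $\as$ to it recovers $a$. First I would address convergence. The term $c_0\in k$ contributes a single exponent $0$. The summand $\sum_{i\ge 0}(a^+)^{p^i}$ is harmless: since $v(a^+)>0$, the valuations $v((a^+)^{p^i})=p^i\,v(a^+)$ tend to $+\infty$, so this part has well-ordered support (for any bound $\dta$ only finitely many summands contribute below $\dta$). The delicate piece is $\sigma:=\sum_{i\ge 1}(a^-)^{1/p^i}$: here $v((a^-)^{1/p^i})=v(a^-)/p^i\to 0^-$, so infinitely many summands have strictly negative valuation. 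I would argue its support is well-ordered by noting that $\supp(a^-)$ is a well-ordered subset of $\Q_{<0}$, hence so is each $\tfrac{1}{p^i}\supp(a^-)$, and the union $\bigcup_{i\ge1}\tfrac{1}{p^i}\supp(a^-)$ lies in $\Q_{<0}$ and accumulates only at $0$; a set of negative rationals accumulating only at $0$ is automatically well-ordered, so $\sigma\in\Ha$.

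Next I would compute $\as(\al)$. Because $\as$ is $\F_p$-linear and (once convergence is granted) continuous for the valuation topology on the relevant complete pieces, I can apply it termwise to each of the three blocks. For the finite piece, $\as(c_0)=c_0^p-c_0=a_0$ by the choice of $c_0$. For the positive block, write $T^+:=\sum_{i\ge0}(a^+)^{p^i}$; then
\[
\as(T^+)=\sum_{i\ge0}(a^+)^{p^{i+1}}-\sum_{i\ge0}(a^+)^{p^i}=\sum_{i\ge1}(a^+)^{p^i}-\sum_{i\ge0}(a^+)^{p^i}=-a^+,
\]
a telescoping in which the $i\to\infty$ tail vanishes in $\Ha$; hence $\as(-T^+)=a^+$. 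For the negative block, write $T^-:=\sum_{i\ge1}(a^-)^{1/p^i}$; since $\big((a^-)^{1/p^i}\big)^p=(a^-)^{1/p^{i-1}}$, we get
\[
\as(T^-)=\sum_{i\ge1}(a^-)^{1/p^{i-1}}-\sum_{i\ge1}(a^-)^{1/p^i}=\sum_{i\ge0}(a^-)^{1/p^i}-\sum_{i\ge1}(a^-)^{1/p^i}=(a^-)^{1/p^0}=a^-,
\]
again a telescoping whose tail $\to0$ in $\Ha$ because $v\big((a^-)^{1/p^i}\big)\to0^-$ is strictly increasing. Summing the three contributions gives $\as(\al)=a^-+a_0+a^+=a$, as claimed.

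The main obstacle is the justification that these termwise manipulations are legitimate — i.e., that the rearrangements and telescopings of infinite sums are valid in $\Ha$, and in particular that $\as$ commutes with the infinite sums $T^-$ and $T^+$. The cleanest way is to check directly, without invoking continuity, that the displayed series for $\al$ has well-ordered support (done above) and that for every $\dta\in\R$ the truncation $\tr_\dta(\as(\al))$ agrees with $\tr_\dta(a)$: for a fixed $\dta$ only finitely many of the summands $(a^-)^{1/p^i}$, $(a^+)^{p^i}$ have a nonzero contribution to $\Ha$-coefficients of exponent $<\dta$ after applying $\as$ (this uses that $p$-th powers and $p$-th roots shift valuations in a controlled, eventually monotone way), so the computation reduces to a finite telescoping in each truncation. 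Once that bookkeeping is in place, the equality $\as(\al)=a$ follows, and since $\al\in\Ha$ this completes the proof.
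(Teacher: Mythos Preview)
Your approach is exactly the paper's---direct verification that $\as(\al)=a$ (which the paper dismisses as ``obvious'') and that $\supp(\al)$ is well-ordered (``easy to check'')---only carried out in much more detail; the telescoping computations and the truncation bookkeeping are correct.

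One small inaccuracy to fix: your claim that $\bigcup_{i\ge1}\tfrac{1}{p^i}\supp(a^-)$ ``accumulates only at $0$'' is not true in general, since $\supp(a^-)$ is an arbitrary well-ordered subset of $\Q_{<0}$ and may already have internal limit points (e.g.\ $\{-2+1/n:n\in\N\}\cup\{-1\}$), which then propagate to each $\tfrac{1}{p^i}\supp(a^-)$. The right argument is the one you already used for $a^+$: for any $\dta<0$ only finitely many $i$ satisfy $v(a^-)/p^i\le\dta$, so the intersection of the union with $(-\infty,\dta]$ is a finite union of well-ordered sets, hence well-ordered; this suffices.
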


\begin{proof}
 It is obvious that $\as(\al)=a$. On the other hand, it is easy to check that $\supp(\al)\sub\Q$ is a well-ordered subset if $\supp(a)$ is well-ordered.    
\end{proof}


\begin{Cor}\label{d1=0}
Suppose that  $\al\in\Ha$ satisfies $\as(\al)=a\in K\setminus \as(K)$, with $\supp(a)\sub(-\infty,0)$. Then, $\supp(\al)\sub vK$, $d_1(\al)=0^-$, $\al$ is a brick and $\dep(\al)=1$.  
\end{Cor}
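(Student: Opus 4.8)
The statement packages together several consequences of Lemma~\ref{strange}, so the plan is to feed the hypotheses of Corollary~\ref{d1=0} into that explicit construction and then read off each claimed property in turn, using the results already available for subfields of $\Ha$ with no finite limits.

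\emph{Step 1: control the support of $\al$.} Since $\supp(a)\sub(-\infty,0)$, in the notation of Lemma~\ref{strange} we have $a_0=0$ (so $c_0\in\F_p$, i.e.\ $c_0$ contributes nothing new) and $a^+=0$, whence
\[
\al=\sum_{i=1}^\infty \left(a\right)^{1/p^i}.
\]
The support of $(a)^{1/p^i}$ is $p^{-i}\supp(a)$, and these supports lie in strictly increasing ``windows'' of $(-\infty,0)$ accumulating only at $0$. I would argue that $\supp(\al)=\bigcup_{i\ge1}p^{-i}\supp(a)\sub(-\infty,0)$, and more precisely that $\supp(\al)\sub vK$: indeed each $q\in\supp(a)$ lies in $vK$ by Corollary~\ref{guess0}, and $vK$ is closed under division by $p$ because $K$ is perfect (if $q=v(b)$ then $q/p=v(b^{1/p})$ and $b^{1/p}\in K$). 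So $\supp(\al)\sub vK$.

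\emph{Step 2: $\al$ is a brick and compute $d_1(\al)$.} I need $\lm(\al)=\{0\}$ with $\al=\tr_0(\al)$; the latter is immediate from $\supp(\al)\sub(-\infty,0)$. For the former: $\supp(a)$ is a nonempty well-ordered subset of $(-\infty,0)$, so it has order type some ordinal; its rescaled copies $p^{-i}\supp(a)$ march off toward $0$, and I claim the union has order type $\omega\cdot(\text{order type of }\supp(a))$ or at least that $0$ is its unique limit of limits after the first accumulation point, so that in fact $\lm(\al)=\{0\}$. The cleanest route is: $\al\notin K$ (since $\as(\al)=a\notin\as(K)$ forces $K(\al)/K$ to be a degree-$p$ extension), $\supp(\al)\sub vK$, so Lemma~\ref{firstLim} applies and gives $d_1(\al)=\dta_{\min}^-$ with $\dta_{\min}=\min(\lm(\al))$; since $\supp(\al)\sub(-\infty,0)$ accumulates at $0$ from below and nowhere else (the windows $p^{-i}\supp(a)$ are pairwise ``bounded apart'' below $0$), we get $\lm(\al)=\{0\}$, hence $d_1(\al)=0^-$ and $\al$ is a brick with $\lm(\al)=\{0\}$.

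\emph{Step 3: depth one.} Now $\al\in\kb\setminus K$ has $\supp(\al)\sub vK$ and $\deg_K\al=p$ (as $K(\al)/K$ is an AS extension). By Lemma~\ref{dep1brick}, $\dep(\al)=1$ iff the minimal brick component of $\al$ has degree $p=\deg_K\al$ over $K$. But $\al$ is itself a brick (Step~2), so its only brick component is $\al$, which has degree $p$; hence $\dep(\al)=1$.

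\emph{Main obstacle.} The only genuinely delicate point is the claim in Step~2 that $\lm(\al)=\{0\}$, i.e.\ that the union $\bigcup_i p^{-i}\supp(a)$ has no finite limit points other than what is forced: one must check that inside each window $p^{-i}\supp(a)$ the set is well-ordered of order type bounded by that of $\supp(a)$, and that consecutive windows do not interleave so as to create a spurious limit. If $\supp(a)$ itself has a finite limit this could fail, but the hypothesis that $K$ has no finite limits guarantees $\lm(a)\sub\{\infty\}$, and since $\supp(a)\sub(-\infty,0)$ is bounded above by $0$ it cannot have $\infty$ as a limit either, so $\supp(a)$ is finite. Then $\bigcup_i p^{-i}\supp(a)$ is a countable union of finite sets accumulating only at $0$, which makes $\al$ manifestly a brick and $\lm(\al)=\{0\}$ transparent. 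I would therefore open the proof by recording that $\supp(a)$ is finite — this trivializes the rest.
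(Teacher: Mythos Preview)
Your proposal is correct and follows essentially the same route as the paper: invoke Lemma~\ref{strange} to write $\al=\sum_{i\ge1}a^{1/p^i}$ (up to $\F_p$), check $\supp(\al)\sub vK$, establish $\lm(\al)=\{0\}$, then apply Lemmas~\ref{firstLim} and~\ref{dep1brick}. Your explicit observation that $\supp(a)$ is \emph{finite} (because $K$ has no finite limits and $\supp(a)$ is bounded) is exactly what makes the claim $\lm(\al)=\{0\}$ rigorous; the paper leaves this implicit and instead exhibits the partial sums $\al_m=\sum_{i=1}^m a^{1/p^i}\in K$ with $v(\al-\al_m)=v(a)/p^{m+1}\to 0^-$, but your route is cleaner.
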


\begin{proof}
By Lemma \ref{strange}, $\al\in\left(\sum_{i=1}^\infty a^{\frac{1}{p^i}}\right)+\F_p$ has support contained in $vK$.
 Consider the partial sums
\[
\al_m:=\sum_{i=1}^m a^{\frac{1}{p^i}}\in K,\quad m\in\N.
\]
The values $v\left(\al-\al_m\right)=v(a)/p^{m+1}$ are negative and  arbitrarily close to zero for $m$ large enough. Hence, $0$ is the minimal limit of $\al$. By Lemma \ref{firstLim}, $d_1(\al)=0^-$.  

Obviously, $\al$ is a brick. Finally, Lemma \ref{dep1brick} shows that $\dep(\al)=1$.
\end{proof}

\begin{Teo}\label{mainCompositum}
If $(K,v)$ is Henselian, perfect, has no finite limits and $L/K$ is a compositum of linearly disjoint AS extensions of $K$, then $\dep(L/K,v)=1$.
\end{Teo}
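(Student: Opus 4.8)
The plan is to reduce Theorem \ref{mainCompositum} to Theorem \ref{generaliza} by producing, for a compositum $L = K(\al_1,\dots,\al_n)$ of linearly disjoint AS extensions of $K$, a new set of AS generators $\al_1',\dots,\al_n'$ of the \emph{same} compositum for which the hypotheses of Theorem \ref{generaliza} hold. So the first step is to replace each generator $\al_i$ by a well-chosen generator of the same subextension $K(\al_i)$ using Lemma \ref{AllGen}: since the set of AS generators of $K(\al_i)/K$ is $\F_p^*\al_i + K$, we are free to subtract any element of $K$ from $\al_i$. Concretely, write $a_i := \as(\al_i) \in K$ and decompose $a_i = a_i^- + a_{i,0} + a_i^+$ as in Lemma \ref{strange}. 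Using Lemma \ref{Split}, the part $a_{i,0}+a_i^+$ (which has nonnegative value) lies in $\as(K)$, say $a_{i,0}+a_i^+ = \as(k_i)$ with $k_i \in K$ (here one uses that $K$ is Henselian with algebraically closed residue field, so Lemma \ref{Split} applies; one should also check $a_{i,0}+a_i^+ \in K$, which follows from Corollary \ref{guess0} applied to truncations). Then $\al_i' := \al_i - k_i$ is another AS generator of $K(\al_i)/K$ with $\as(\al_i') = a_i^- \in K$ and $\supp(a_i^-) \subseteq (-\infty,0)$.

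The second step is to verify the hypotheses of Theorem \ref{generaliza} for $\al_1',\dots,\al_n'$. Linear disjointness of $K(\al_1'),\dots,K(\al_n')$ is immediate since $K(\al_i')=K(\al_i)$. Next, $d_1(\al_i') = 0^-$ for each $i$: this is exactly Corollary \ref{d1=0}, which applies because $\as(\al_i') = a_i^-$ has support contained in $(-\infty,0)$. Finally, condition \eqref{equline}: with $a_i' := \as(\al_i') = a_i^-$, we need $c_1,\dots,c_n \in K$ such that any nontrivial $\F_p$-combination $c_1 a_1' + \dots + c_n a_n'$ has value zero. By Lemma \ref{immediate} we may take $c_i := y^{-v(a_i')}$ (note $v(a_i') = v(a_i^-) < 0$ lies in $vK = \Q$, so $y^{-v(a_i')} \in K$), so that $v(c_i a_i') = 0$ for each $i$. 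A nontrivial $\F_p$-combination of the $c_i a_i'$ is then a sum of finitely many elements of value $\geq 0$; one must argue its value is exactly $0$. This is where some care is needed — a priori leading terms could cancel. The cleanest route is to choose the $c_i$ so that the leading terms $\la_i \in k^*$ of $c_i a_i'$ are $\F_p$-linearly independent over $\F_p$ inside $k$ — possible because $k$, being algebraically closed of characteristic $p$, contains $\F_p$ and is infinite-dimensional over it — so that no cancellation of the degree-zero terms occurs. Alternatively, since \eqref{equline} need only be verifiable and $k \supseteq \overline{\F_p}$, one may invoke the Remark after Theorem \ref{generaliza} directly.

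The third step is simply to apply Theorem \ref{generaliza} to $\al_1',\dots,\al_n'$, concluding that $L = K(\al_1',\dots,\al_n') = K(\al_1,\dots,\al_n)$ has depth one over $K$. Since depth is always $\geq 1$ for a nontrivial extension, $\dep(L/K,v) = 1$, as claimed.

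\textbf{Main obstacle.} The only genuinely delicate point is guaranteeing condition \eqref{equline}, i.e.\ ruling out cancellation of leading terms in an $\F_p$-combination $\sum c_i a_i'$; everything else is a routine assembly of the already-established lemmas (\ref{AllGen}, \ref{strange}, \ref{Split}, \ref{immediate}, \ref{d1=0}). I expect this to be handled exactly as in the Remark following Theorem \ref{generaliza}: choosing the normalizing constants $c_i$ so that the residues of $c_i a_i'$ in $k$ are $\F_p$-independent, using that $K \supseteq k \supseteq \overline{\F_p}$.
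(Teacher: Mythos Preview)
Your overall strategy matches the paper's proof exactly: replace each $\al_i$ by an AS generator of the same extension whose Artin--Schreier image has support in $(-\infty,0)$ (using Lemma~\ref{Split} to kill the nonnegative part and Lemma~\ref{strange}/Corollary~\ref{d1=0} to get $d_1=0^-$), then apply Theorem~\ref{generaliza}.

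There is one genuine confusion, however. You have misread condition~\eqref{equline}. In the statement of Theorem~\ref{generaliza}, the symbols $a_1,\dots,a_n$ in \eqref{equline} are the coordinates of a vector in $\F_p^n$, \emph{not} the Artin--Schreier images $\as(\al_i)$. The condition simply asks for $c_1,\dots,c_n\in K$ whose $\F_p$-span consists of elements of value zero (apart from $0$ itself); it has nothing to do with the elements $a_i' = \as(\al_i')$. Your discussion of taking $c_i = y^{-v(a_i')}$ and worrying about cancellation of leading terms of $c_i a_i'$ is therefore aimed at the wrong target and should be discarded.

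Fortunately, your ``alternative'' route --- invoking the Remark after Theorem~\ref{generaliza}, using that $K \supseteq k \supseteq \overline{\F_p}$ --- is exactly right, and it is precisely what the paper does (in one line). So once you drop the misdirected paragraph about $c_i a_i'$, your proof is correct and coincides with the paper's.
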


\begin{proof}
For all $i=1,\dots,n$, let $a_i=\as(\al_i)\in K$ and write
\[
a_i=a_i^-+b_i,\qquad \supp(a_i^-)\sub(-\infty,0), \ \supp(b_i)\sub[0,\infty).
\]
Since $v(b_i)\ge0$,  there exists some $e_i\in K$ such that $\as(e_i)=b_i$, by Lemma \ref{Split}. By Lemma \ref{strange}, we can construct an element $\al_i^-\in \Ha$ such that  $\as(\al_i^-)=a_i^-$ as follows
\[
\al_i^-:=\sum_{j=1}^\infty(a_i^-)^{1/p^j}.
\]
By Corollary \ref{d1=0}, we have $d_1(\al_i^-)=0^-$.
Since $\as(\al_i^-+e_i)=a_i=\as(\al_i)$, we have $\al_i^-\in\al_i-e_i+\F_p\sub \al_i+K$, so that $\al_i^-$ generates $K(\al_i)$ as well. Therefore, after eventually replacing each $\al_i$ with $\al_i^-$, we may assume that
\[
d_1(\al_1)=\cdots=d_1(\al_n)=0^-.
\]
Since $\overline {\F_p}\subseteq K$, condition \eqref{equline} is trivially satisfied. Consequently, our result follows from Theorem \ref{generaliza}.
\end{proof}

\section{Conjecture and examples}\label{examples}
In view of Theorem \ref{mainCompositum} we have the following conjecture.
\begin{Con}\label{mainCon}
	Suppose that $(K,v)$ is Henselian, perfect and has no finite limits. An AS tower $M/K$ has  $\dep(M/K,v)=1$ if and only if $M/K$ is a compositum of AS extensions of $K$.
\end{Con}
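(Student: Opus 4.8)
The plan is to prove only the forward-looking ``if'' direction in full (which is already done: it is precisely Theorem~\ref{mainCompositum}), so the real content of the conjecture is the converse, and I will explain the strategy I would follow to attack it. Assume $M/K$ is an AS tower with $\dep(M/K,v)=1$; I want to show $M$ is a compositum of AS extensions of $K$, equivalently that $\op{Gal}(M^{\op{gal}}/K)$-- or at least the structure of $M$ itself -- forces $M=K(\al_1,\dots,\al_n)$ with the $K(\al_i)$ linearly disjoint. First I would reduce to the Galois case: replacing $M$ by its Galois closure $\widetilde M/K$ inside $\kb$ and using Lemma~\ref{criterion} (and its iteration) to control how far the normal closure goes, then arguing that if $\widetilde M$ is a compositum of AS extensions and $\op{Gal}(\widetilde M/M)$ behaves well, $M$ itself must be such a compositum. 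The degree-$p^2$ analysis in Corollary~\ref{structure} is the model: an AS tower of degree $p^2$ that is Galois is a compositum of two AS extensions precisely when $\op{Gal}\simeq C_p\times C_p$, i.e.\ when $\Tr_{L/K}(\be)=0$, so the conjecture at level $p^2$ is the statement that $\dep(M/K)=1\Rightarrow \Tr_{L/K}(\be)=0$.

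Concretely, at the first nontrivial level, let $K\subseteq L=K(\al)\subseteq M=L(\eta)$ with $a=\as(\al)$, $b=\as(\eta)$, and suppose $\dep(M/K)=1$: there is $\t\in M$ with $M=K(\t)$ admitting an Okutsu sequence of length one, i.e.\ $d_1(\t)=\dta^-$ for a single cut and $\t$ at distance $>\dta^-$ from everything of smaller degree. Working inside $\Ha$, I would write $\t$ in the normal form supplied by Lemma~\ref{strange} applied to its AS-components, and compute $\supp(\t)$ and its brick components; the key leverage is Lemma~\ref{dep1brick}, which says $\dep(\t)=1$ iff the \emph{minimal} brick component of $\t$ already has degree $p^2$ over $K$. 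So I would analyze the minimal brick component $\t_{-\infty}$ of $\t$ and show that if $M/K$ is \emph{not} a compositum (i.e.\ $M$ is the unique degree-$p$ tower that is ``ramified'' in the sense $\Tr_{L/K}(\be)\neq 0$, or $M/K$ is non-Galois), then $\t_{-\infty}$ can only have degree $p$ over $K$ --- equivalently, the ``bottom'' of every generator of $M$ already lives in an AS extension $L'$ of $K$ and the jump from $L'$ to $M$ cannot be absorbed into a single limit augmentation. This should follow by combining Lemma~\ref{crucial} (which lets me clear away the leading factor $t$ of any generator, reducing a general $\t$ to the shape $t(s-a)$ with $s$ a sum of the standard AS-primitives) with a careful bookkeeping of how the partial sums $\sum_{j\le m} (a_i^-)^{1/p^j}$ and the ``$C_{p^2}$ twist'' term $\be$ interact on supports.

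The key step --- and the one I expect to be the main obstacle --- is the converse direction's heart: showing that a generator $\t$ of a \emph{non-compositum} AS tower $M/K$ necessarily has a minimal brick component of degree $<[M:K]$. The difficulty is that ``AS generator of $M/K$'' is not just $K$-linear data: every $\t$ with $K(\t)=M$ has the form $\t=\sum_i c_i\al_i'+(\text{lower-degree correction})$ where the $\al_i'$ range over a basis of the relevant part of $M$, but unlike the compositum case the correction term is forced (by Lemma~\ref{criterion}-type obstructions) to contribute to the support at the \emph{same} order of magnitude as the leading AS-primitives, so it can merge brick components and one must rule out that this merging ever creates a single brick of full degree. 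I would try to make this precise by an induction on the tower length $n$: peel off the top AS step $M=M'(\eta)$, use Corollary~\ref{structure}/Lemma~\ref{criterion} to say that either $M/M'$ splits off as an independent AS extension of $K$ (handled by Theorem~\ref{generaliza}/induction) or $b=\as(\eta)\notin K\cdot(\text{AS-span})+\as(M')$ in an essential way, and in the latter case track that $d_1(\t)$ for any generator is controlled by a \emph{proper} subextension, contradicting depth one via Theorem~\ref{OSdepth}. Making the ``essential'' dichotomy rigorous, and in particular handling convex subgroups of $\g=\Q$ (vacuous here, which is why the rank-one / no-finite-limits hypotheses are imposed) versus the genuine obstruction coming from $\Tr_{L/K}(\be)$, is where I expect the serious work to lie; the examples in Section~\ref{examples} with $\dep=2$ are exactly the configurations this argument must not accidentally exclude, and conversely any correct proof must degenerate to those examples on the nose.
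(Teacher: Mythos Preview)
The statement is labeled a \emph{Conjecture}, and the paper does not prove it: only the ``if'' direction is established (Theorem~\ref{mainCompositum}), while the converse is supported solely by the explicit examples in Sections~\ref{subsecEx2}--\ref{Exemp3}. So there is no ``paper's own proof'' of the full statement to compare against; your identification of the ``if'' direction with Theorem~\ref{mainCompositum} is correct, and the converse remains open.

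That said, your outlined strategy for the converse has a genuine gap. The reduction to the Galois closure $\widetilde M$ does not control depth: there is no reason why $\dep(M/K)=1$ should imply $\dep(\widetilde M/K)=1$, so you cannot simply transport the hypothesis to $\widetilde M$ and argue there. (Conversely, knowing $\widetilde M$ is a compositum would indeed force $M$ to be one, since $\gal(\widetilde M/K)$ would be elementary abelian; but you have no way to reach that hypothesis.) Your inductive dichotomy ``either the top step splits off as an AS extension of $K$, or there is an essential obstruction'' is also not made precise: Lemma~\ref{criterion} and Corollary~\ref{structure} tell you when a degree-$p^2$ tower is Galois and when its group is $C_p\times C_p$, but they do not by themselves produce a depth obstruction for an \emph{arbitrary} generator of $M$.

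What the paper actually does for the converse direction is far more concrete and far less general: for each specific tower (cyclic $C_{p^2}$, non-Galois, and the $p^3$ example), it takes an \emph{arbitrary} generator $\zeta$, writes it in a basis over the intermediate field, and shows by explicit brick-component bookkeeping (Lemmas~\ref{ord2}, \ref{CompsTrunc}, \ref{CompsEtaj}, \ref{Mord2}) that the minimal brick component of $\zeta$ always has degree strictly less than $[M:K]$. The reduction device is Lemma~\ref{crucial}, which lets one assume the leading coefficient is $1$; after that the argument is a direct computation showing that the brick components of $\eta^j$ cannot all be cancelled by anything coming from the smaller field. This is example-by-example verification, not a structural proof, and the paper makes no claim otherwise.
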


In this section, we explore this conjecture by analyzing some examples of AS towers of degree $p^2$ and $p^3$.

\subsection{Abhyankar's AS extension.}\label{subsecAbh}
Consider the following element in $\Ha$:
\[
\al=\sum_{n=1}^\infty y^{-1/p^n}.
\]
Clearly, $\as(\al)=y^{-1}$ and $\lm(\al)=\{0\}$. Since $K$ has no finite limits, we see that $L=K(\al)$ is an  AS extension of $K$ admitting $\al$ as an  AS generator. This AS extension was first introduced by 
Abhyankar. 

Our examples of AS towers of degree $p^2$ will be constructed as AS extensions of $L/K$. 
In this section, we review some particular properties of $L/K$.

Note that $\dep(\al)=1$ because  $\al$ is a brick of degree $p$.  
Let us describe the brick components of all elements in $L$. This is obvious if $p=2$ because $L=K\al+K$ in this case.
For $p>2$, it suffices to compute the brick components of $\al^j$, for $2\le j<p$.
Since $\al=y^{-1/p}+\al^{1/p}$, we may write
\begin{equation}\label{binomial}
\al^j=\left(y^{-1/p}+\al^{1/p}\right)^j=
y^{-j/p}+jy^{-(j-1)/p}\al^{1/p}+\cdots+jy^{-1/p}\al^{(j-1)/p}+\al^{j/p}.
\end{equation}
Now, all these summands have disjoint support. Indeed, if we denote
\[
\be_\ell:=\comb{j}{\ell}y^{-(j-\ell)/p}\al^{\ell/p},\quad 0\le\ell\le j,
\]then we have
\[
v\left(\be_\ell\right)=-\dfrac{j-\ell}p-\dfrac{\ell}{p^2},\qquad 
\sup\left(\supp(\be_\ell)\right)=-\dfrac{j-\ell}p.
\]
For $\ell<j$, we have $v(\be_{\ell+1})> -(j-\ell)/p$, because this is equivalent to: $p> \ell+1$.

Now, an iterative application of (\ref{binomial}) leads to
\begin{equation}\label{alj}
\al^j=\sum_{n=0}^\infty \left(\be_0^{1/p^n}+\cdots+\be_{j-1}^{1/p^n}\right),
\end{equation}
and all summands in this expression have disjoint support. 

Suppose that $j=2$ and consider the brick
\[
\be:=\be_0+\be_1=y^{-2/p}+2y^{-1/p}\al^{1/p},
\]
of degree $p$ over $K$. The decomposition (\ref{alj}) determines the following canonical decomposition of $\al^2$ as a sum of brick components of degree $p$, with disjoint support:
\begin{equation}\label{al2}
	\al^2=\be+\be^{1/p}+\cdots+\be^{1/p^n}+\cdots,\qquad \supp(\be^{1/p^n})\sub[-2/p^{n+1},-1/p^{n+1}).
\end{equation}

By a recurrent argumentation, we can use this description to describe the canonical brick components of each summand in (\ref{alj}) for an arbitrary $j<p$.

Let us derive some easy consequences from (\ref{alj}). For every $1<j<p$ we have:
\[
\lm^j(\al^j)=\{0\},\qquad \lm^{j-1}(\al^j)=\bigcup_{n\in\N}\lm^{j-1}(\be_{j-1}^{1/p^n})=\left\{-1/p^n\mid n\in \N\right\}.
\]
Also, note that $\lm^n(\rho)\sub \Q$ for all $\rho\in L$ and all $n\in\N$.

We mention two more consequences, which require some argumentation.

\begin{Lem}\label{trL}
For all $\rho\in L$, $\dta\in\Ri$, we have $\tr_\dta(\rho)\in L$.
\end{Lem}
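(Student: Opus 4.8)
The statement to prove is: for all $\rho\in L = K(\al)$ and all $\dta\in\Ri$, the truncation $\tr_\dta(\rho)$ lies in $L$. The natural approach is to reduce to a $k$-basis of $L/K$ and use the fact, already established in the Hahn-field setting, that truncation respects $K$-rational elements (the Corollary just before Lemma~\ref{strange}: $\tr_\dta(a)\in K$ for $a\in K$).

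First I would fix the $K$-basis $\{1,\al,\al^2,\dots,\al^{p-1}\}$ of $L/K$ and write $\rho = \sum_{j=0}^{p-1} c_j\al^j$ with $c_j\in K$. Truncation is $k$-linear but certainly not $K$-linear, so I cannot simply truncate termwise. The key observation to exploit is the explicit description of the supports of the brick components of $\al^j$ given in \eqref{alj}: the summands $\be_0^{1/p^n}+\cdots+\be_{j-1}^{1/p^n}$ of $\al^j$ have pairwise disjoint supports, all contained in $(-\infty,0)$, with the $n$-th block living in a shrinking interval near $0$. Multiplying by $c_j\in K$ (whose support, by the "no finite limits" hypothesis and Corollary~\ref{guess0}, is either finite or accumulates only at $\infty$) shifts and spreads these supports but in a controlled way. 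The plan is to show that for a given $\dta$, the truncation $\tr_\dta$ interacts with the product $c_j\al^j$ so that $\tr_\dta(c_j\al^j)$ is again a finite $K$-linear combination of the $\al^i$ — concretely, $\tr_\dta(c_j\al^j)$ should be expressible as $c_j'\al^j + (\text{lower-degree terms with } K\text{-coefficients})$ where $c_j'$ is a truncation of $c_j$, hence in $K$ by the Corollary.

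The cleanest route is probably induction on $j$. For $j=0$ the claim is the Corollary $\tr_\dta(c_0)\in K\subseteq L$. For the inductive step, use $\al = y^{-1/p}+\al^{1/p}$ to write $\al^j = y^{-j/p} + (\text{terms }jy^{-(j-\ell)/p}\al^{\ell/p}\text{ for }1\le\ell<j) + \al^{j/p}$ as in \eqref{binomial}, with disjoint supports; since truncation is additive and the supports are disjoint, $\tr_\dta(\al^j)$ splits as the sum of the truncations of these pieces. The pieces $y^{-(j-\ell)/p}\al^{\ell/p}$ are $p$-th roots of $K$-multiples of $\al^\ell$; I would handle the $p$-th-root operation by noting that $K$ is perfect (so $\tr_{p\dta}(\xi)^{1/p} = \tr_\dta(\xi^{1/p})$ for $\xi\in\Ha$, since raising to the $p$-th power multiplies exponents by $p$) and then apply the inductive hypothesis at level $\ell<j$ to $\xi$, a $K$-multiple of $\al^\ell$. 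Assembling, $\tr_\dta(\al^j)\in L^{1/p}\cap\Ha$; but $L^{1/p} = K^{1/p}(\al^{1/p})$ and one checks $\al^{1/p}\in L$ (indeed $\al^{1/p} = \al - y^{-1/p} \in L$ since $y^{-1/p}\in K$ by Lemma~\ref{immediate}) and $K^{1/p}=K$, so $L^{1/p}=L$ and we land back in $L$. Finally, multiply by the original $c_j\in K$ and sum over $j$, using $k$-additivity of $\tr_\dta$ once more.

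The main obstacle I anticipate is bookkeeping with the $p$-th roots and the disjoint-support decomposition: one must be careful that $\tr_\dta$ commutes with the $p$-th root the right way (it does, because $\supp(\xi^{1/p}) = \tfrac1p\supp(\xi)$, so $\tr_\dta(\xi^{1/p}) = (\tr_{p\dta}\xi)^{1/p}$), and that after multiplying the disjoint-support pieces by $c_j$ one can still separate out the genuinely new "$\al^j$-part" from the lower-order debris — this is where the bound $\sup(\supp\be_\ell) = -(j-\ell)/p$ and the strict inequalities $v(\be_{\ell+1}) > -(j-\ell)/p$ from the Abhyankar analysis do the work, guaranteeing the relevant supports stay disjoint after the shift by $\supp(c_j)$. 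Once that separation is in hand, each coefficient that appears is a truncation of an element of $K$, hence in $K$ by the earlier Corollary, and the result follows.
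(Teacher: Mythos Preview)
Your overall strategy---induction on $j$, exploiting the binomial identity \eqref{binomial} and the perfectness of $L$---matches the paper's, but there is a genuine gap in the inductive step. The single application of \eqref{binomial} produces, besides the terms $\comb{j}{\ell}y^{-(j-\ell)/p}\al^{\ell/p}$ with $\ell<j$ (which you handle correctly via the induction hypothesis and the Frobenius compatibility $\tr_\dta(\xi^{1/p})=(\tr_{p\dta}\xi)^{1/p}$), the top-degree term $\al^{j/p}$. Your sentence ``Assembling, $\tr_\dta(\al^j)\in L^{1/p}$'' silently assumes $\tr_\dta(\al^{j/p})\in L^{1/p}$, i.e.\ $\tr_{p\dta}(\al^j)\in L$, which is exactly the statement for $j$ at a different $\dta$: circular. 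Equivalently, $\al^{j/p}=(\al-y^{-1/p})^j$ is still of degree $j$ in $\al$, so the induction hypothesis does not apply to it. The paper's remedy is to iterate \eqref{binomial} to obtain the full decomposition \eqref{alj}, in which \emph{every} summand $\be_\ell^{1/p^n}$ involves only $\al^{\ell/p^{n+1}}$ with $\ell<j$; then for $\dta<0$ only finitely many of these summands have $v(\be_0^{1/p^n})<\dta$, and each of their truncations falls under the induction hypothesis. (For $\dta\ge0$ one has $\tr_\dta(\al^j)=\al^j$ trivially.)

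A secondary remark: your treatment of the coefficient $c_j\in K$ is on the right track but left vague. The paper packages this as a separate equivalence: for any fixed $a\in K^*$, one has $\tr_\dta(\rho)\in L$ for all $\dta\in\R$ if and only if $\tr_\dta(a\rho)\in L$ for all $\dta\in\R$. This is proved by noting that only finitely many $q\in\supp(a)$ satisfy $q<\dta-v(\rho)$ (here the ``no finite limits'' hypothesis enters), and that $\tr_\dta(\la y^q\rho)=\la y^q\,\tr_{\dta-q}(\rho)$. With this reduction, the whole lemma collapses to the case $\rho=\al^j$.
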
 

\begin{proof}
Take $\rho\in L$. If $\dta=\infty$, then $\tr_\dta(\rho)=\rho\in L$. Suppose that $\dta<\infty$. 	

Take an arbitrary $a\in K^*$. Let us first show that
\begin{equation}\label{claima}
\tr_\dta(\rho)\in L\quad\mbox{for all }\ \dta\in\R\ \sii\ \tr_\dta(a\rho)\in L\quad\mbox{for all }\ \dta\in\R. 
\end{equation}
Indeed, only for the finite number of $q\in \supp(a)$ such that $q<\dta-v(\rho)$, we have $\tr_\dta\left(y^q\rho\right)\ne0$. Since $\tr_\dta$ is a $k$-linear operator, $\tr_\dta(a\rho)$ is a finite sum of elements of the form  
$\tr_\dta(\la y^q\rho)=\la y^q\tr_{\dta-q}(\rho)$, where  $\la\in k^*$. This proves (\ref{claima}).

Hence, since $L=\sum_{0\le j<p}K\al^j$, it suffices to prove the lemma for each power $\al^j$. We can argue by induction on $j$. The statement being obvious for $j=0,1$, let us assume that $j>1$ and the lemma holds for all smaller powers of $\al$. Trivially, if $n\in\N$ and $i<j$, then the lemma holds for $\al^{i/p^n}$ too.
   
If $\dta\ge0$, then $\tr_\dta(\al^j)=\al^j\in L$. Suppose  $\dta<0$. Then, only for a finite number of $n\in\N$ we have 
\[
v\left(\be_0^{1/p^n}\right)=-2/p^{n+1}<\dta.
\] 
Hence, only a finite number of summands in the expression (\ref{alj}) have a non-zero contribution to $\tr_\dta(\al^j)
$. For all these summands, we have $\tr_\dta\left(\be_\ell^{1/p^n}\right)\in L$ by the induction hypothesis.
\end{proof}

\begin{Lem}\label{ord2}
	For all $n\in\N$ denote $J_n:=[-1/p^{n},-1/p^{n+1})$.
	Let $\rho\in L$ such that $\lm^2(\rho)=\{0\}$. Then, there exist $n_0\in \N$ and $\la\in k^*$ such that
\[
	\tr_{J_n}(\rho)\in \la\,\be^{1/p^n}+K\quad \mbox{ for all }\ n\ge n_0.
\]
\end{Lem}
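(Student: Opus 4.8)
The plan is to expand $\rho$ in the $K$-basis $1,\al,\dots,\al^{p-1}$ of $L$: write $\rho=\sum_{j=0}^{p-1}c_j\al^j$ with $c_j\in K$ and set $j^*:=\max\{j\mid c_j\ne 0\}$. I will first show that the hypothesis $\lm^2(\rho)=\{0\}$ forces $j^*=2$ and $c_2=\la\in k^*$, and then compute $\tr_{J_n}(\rho)$ directly from $\rho=c_0+c_1\al+\la\al^2$. The tools are: the iterated-limit data $\lm^j(\al^j)=\{0\}$ for $0\le j<p$ (immediate for $j\le 1$, recorded after (\ref{alj}) for $2\le j<p$); Lemma~\ref{limSum}; and the fact that $K$ has no finite limits, which I use in the form ``$\supp(a)$ has no accumulation point in $\R$ for every $a\in K$''. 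A remark used repeatedly: since $\supp(\al^j)\subseteq[-j/p,0)$, each expansion $c_j\al^j=\sum_q(c_j)_q\,y^q\al^j$ is locally finite in $\Ha$, so truncating to a bounded interval produces a finite sum; combined with $\lm^j(\al^j)=\{0\}$ and Lemma~\ref{limSum}, this yields $\lm^j(c_j\al^j)\cap\R=\supp(c_j)$, i.e. $\lm^j(c_j\al^j)\subseteq\supp(c_j)\cup\{\infty\}$ and every $q\in\supp(c_j)$ lies in $\lm^j(c_j\al^j)$.

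First, $j^*\le 1$ is excluded: then $\rho=c_0+c_1\al$, and since $\lm^1(\al)=\{0\}$ we get $\lm^1(c_1\al)\subseteq\supp(c_1)\cup\{\infty\}$, hence $\lm^2(c_1\al)\subseteq\lm(\supp(c_1)\cup\{\infty\})\subseteq\{\infty\}$; together with $\lm^2(c_0)\subseteq\{\infty\}$ this gives $\lm^2(\rho)\subseteq\{\infty\}$, contradicting the hypothesis. Next, suppose $j^*\ge 3$ and write $\rho=c_{j^*}\al^{j^*}+w$ with $w=\sum_{j<j^*}c_j\al^j$. For $j<j^*$, applying $\lm^{\,j^*-j}$ to $\lm^j(c_j\al^j)\subseteq\supp(c_j)\cup\{\infty\}$ and using that $\supp(c_j)$ has no finite accumulation point collapses everything to $\{\infty\}$; hence $\lm^{j^*}(w)\subseteq\{\infty\}$. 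On the other hand $v(c_{j^*})\in\supp(c_{j^*})\subseteq\lm^{j^*}(c_{j^*}\al^{j^*})$, and $v(c_{j^*})$ is finite, so by Lemma~\ref{limSum} $v(c_{j^*})\in\lm^{j^*}(\rho)$. But $\lm^2(\rho)=\{0\}$ forces $\lm^m(\rho)=\emptyset$ for all $m\ge 3$ --- a contradiction. Therefore $j^*=2$. Finally, with $j^*=2$ we have $\lm^2(c_2\al^2)\supseteq\supp(c_2)$ while $\lm^2(c_0+c_1\al)\subseteq\{\infty\}$, so any $q\in\supp(c_2)$ with $q\ne 0$ would lie in $\lm^2(\rho)=\{0\}$ --- impossible. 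Hence $\supp(c_2)=\{0\}$, i.e. $c_2=\la$ for some $\la\in k^*$.

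It remains to compute $\tr_{J_n}(\rho)$ for $\rho=\la\al^2+s$, $s:=c_0+c_1\al$. By (\ref{al2}) the brick components of $\al^2$ are the $\be^{1/p^m}$, with $\supp(\be^{1/p^m})\subseteq[-2/p^{m+1},-1/p^{m+1})$; a direct comparison with $J_n=[-1/p^n,-1/p^{n+1})$ gives $\supp(\be^{1/p^m})\subseteq J_n$ if $m=n$ and $\supp(\be^{1/p^m})\cap J_n=\emptyset$ otherwise, so $\tr_{J_n}(\al^2)=\be^{1/p^n}$ for all $n$. As for $s$: $\tr_{J_n}(c_0)$ is the finite sum $\sum_{q\in\supp(c_0)\cap J_n}(c_0)_q\,y^q$ (finite since $\supp(c_0)$ has no finite accumulation point) and lies in $K$ because each such $q$ lies in $vK$; and $\tr_{J_n}(c_1\al)=\sum_q(c_1)_q\,y^q\tr_{J_n-q}(\al)$, where each $\tr_{J_n-q}(\al)$ is a finite sum of monomials $y^{-1/p^m}$ because $\al$ is a brick. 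Since $\supp(c_1)\cap\overline{J_n}=\emptyset$ for $n$ large, $\supp(c_1\al)\cap J_n$ is then finite, so $\tr_{J_n}(c_1\al)$ is a finite sum of monomials $y^{q-1/p^m}$ with $q-1/p^m\in vK$, hence lies in $K$. Thus $\tr_{J_n}(s)\in K$ for all $n\ge n_0$, and
\[
\tr_{J_n}(\rho)=\la\,\be^{1/p^n}+\tr_{J_n}(s)\in\la\,\be^{1/p^n}+K\qquad(n\ge n_0),
\]
which is the assertion, with $\la\in k^*$.

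I expect the main obstacle to be the bookkeeping underlying ``Lemma~\ref{limSum} applies locally'': one must verify that the expansions $c_j\al^j=\sum_q(c_j)_q y^q\al^j$ are genuinely locally finite (which rests on $\supp(\al^j)\subseteq[-j/p,0)$) and that near any finite exponent only finitely many translates $y^q\al^j$ interfere, so that $\lm^j(c_j\al^j)$ is pinned to $\supp(c_j)$ on the nose; and similarly one must check that $\supp(c_1\al)\cap J_n$ is finite for large $n$. Both reduce to the elementary fact that $\supp(a)$ has no finite accumulation point for $a\in K$, exactly as in the proofs of Corollary~\ref{d1=0} and Lemma~\ref{finiteTr}, so no new idea is required.
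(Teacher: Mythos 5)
Your proof is correct and follows essentially the same route as the paper: expand $\rho$ in the $K$-basis $1,\alpha,\dots,\alpha^{p-1}$, use $\lm^j(\alpha^j)=\{0\}$ together with Lemma~\ref{limSum} to force the top degree to be exactly $2$ with constant leading coefficient $\la\in k^*$, then read $\tr_{J_n}(\alpha^2)=\be^{1/p^n}$ from the brick decomposition~(\ref{al2}) and observe that $c_0+c_1\alpha$ has only finitely many negative limits, so its $\tr_{J_n}$ lies in $K$ for $n$ large. The only difference is bookkeeping: you track the possible $\{\infty\}$ contribution to the iterated limit sets more explicitly (where the paper writes the slightly loose $\lm^j(a_i\alpha^i)=\emptyset$ for $i<j$), but the substance is identical.
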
 

\begin{proof}
Since $-1/p^n< -2/p^{n+1}$ for all $n\in\N$,
  the decomposition  (\ref{al2}) shows that 
\[
 \tr_{J_n}(\al^2)=\be^{1/p^n}\quad\mbox{ for all }\ n\in\N.
\]

Write $\rho=\sum_{i=0}^{p-1}a_i\al^i$ for some $a_i\in K$. Let $j$ be the largest index such that $a_j\ne0$. Since $\lm^j(\al^j)=\{0\}$, we have $\lm^j\left(a_j\al^j\right)=\supp(a_j)$ and $\lm^j\left(a_i\al^i\right)=\emptyset$ for all $i<j$. By Lemma \ref{limSum}, our assumption $\lm^2(\rho)=\{0\}$ implies that $j=2$ and $a_2=\la$ for some $\la\in k^*$.
Also, since $\lm(\al)=\{0\}$, we have $\lm\left(a_1\al\right)=\supp(a_1)$, so that $a_1\al$ has a finite number of negative limits. Hence, for some $n_0\in\N$ large enough, we shall have $\tr_{[-1/p^{n_0},0)}(a_0+a_1\al)\in K$. 
This ends the proof.
\end{proof}

\subsection{A Galois AS tower of depth two}\label{subsecEx2}
Take $p=2$. It is easy to check that $y^{-1}\not\in\as(K)$ implies $y^{-1}\al\not\in\as(L)$.
Thus, we may consider the AS tower $M=L(\eta)$, where $\eta$ is the AS generator determined by 
\begin{equation}\label{eta2}
 \eta^2+\eta=y^{-1}\al,\qquad \eta=\sum_{n=1}^\infty \left(y^{-1}\al\right)^{1/p^n}.
\end{equation}
Obviously, $\eta$ is not a brick and $\dep(\eta)=2$. More precisely, the brick components of $\eta$ described in (\ref{eta2}) have disjoint support, degree $p$ over $K$,  and an increasing limit: 
\[
\lm\left((y^{-1}\al)^{1/p^n}\right)=\lm\left(y^{-1/p^n}\al^{1/p^n}\right)=\left\{-1/p^n\right\}\quad\mbox{for all } \ n. 
\]
In particular, $
\lm(\eta)=\{-1/p^n\mid n\in\N\}\cup\{0\}.$

Since $\sg(y^{-1}\al)-y^{-1}\al=y^{-1}=\as(\al)$, Lemma \ref{criterion} shows that $M/K$ is a Galois extension. Since $\Tr_{L/K}(\al)=1$, Corollary \ref{structure} shows that $\gal(M/K)$ is a cyclic group of order $4$. According to Conjecture \ref{mainCon}, we should have $\dep(M/K)>1$.

\begin{Lem}\label{Ex2}
$\dep(M/K)=2$.	
\end{Lem}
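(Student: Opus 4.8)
The plan is to show that $\dep(M/K) \le 2$ is clear from the brick decomposition of $\eta$ and then to establish the lower bound $\dep(M/K) \ge 2$ by ruling out the existence of any generator $\vartheta$ of $M/K$ with $\dep(\vartheta) = 1$. First I would record the upper bound: the element $\eta$ generates $M/K$ (since $[M:K] = p^2$ and $K \subseteq L \subseteq M$ with $\eta$ an AS generator of $M/L$), and the brick components of $\eta$ exhibited in \eqref{eta2} have degree $p$ over $K$, so by Lemma \ref{dep1brick} we get $\dep(\eta) = 2$; hence $\dep(M/K) \le 2$. It remains to prove $\dep(M/K) \ne 1$.

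For the lower bound, suppose for contradiction that there is $\vartheta \in M$ with $M = K(\vartheta)$ and $\dep(\vartheta) = 1$. I would first reduce to a normalized form of $\vartheta$. Write $\vartheta = u + w\eta$ with $u, w \in L$ (using $M = L + L\eta$, valid since $p = 2$), where $w \ne 0$ because $\vartheta \notin L$. Since $\vartheta$ and $w\eta$ differ by $u \in L$, and since truncations of elements of $L$ lie in $L$ by Lemma \ref{trL}, the brick-component structure of $\vartheta$ past a suitable point is controlled by that of $w\eta$. The key point to extract is that $\supp(\vartheta) \subseteq vK = \Q$ (all the building blocks $y^{-1/p^n}$, $\al^{1/p^n}$, and elements of $L$ have support in $\Q$), so Lemma \ref{dep1brick} applies: $\dep(\vartheta) = 1$ would force the \emph{minimal} brick component of $\vartheta$ to have degree $n = p^2$ over $K$. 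The strategy is to show instead that every brick component of $\vartheta$ has degree dividing $p$ over $K$, which is the desired contradiction.

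The heart of the argument is analyzing $w\eta$ for $w \in L^*$. Here I would use the explicit description of $L$ from Section \ref{subsecAbh}: write $w = a_0 + a_1\al$ with $a_i \in K$ (since $p = 2$), and expand $w\eta = a_0\eta + a_1\al\eta$. Using $\eta = \sum_{n\ge1}(y^{-1}\al)^{1/p^n}$, one sees that $\al\eta = \al\sum_n y^{-1/p^n}\al^{1/p^n}$, and the terms $\al \cdot y^{-1/p^n}\al^{1/p^n} = y^{-1/p^n}\al^{1+1/p^n}$ together with the $a_0 y^{-1/p^n}\al^{1/p^n}$ terms have, for $n$ large, disjoint supports sitting in the intervals $[-1/p^n, -1/p^{n+1})$ analyzed in Lemma \ref{ord2}. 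Each such component is of the form $y^{q}$ times a product of $\al$-powers of $p$-power-fractional exponent, hence lies in $L$ (degree $p$ over $K$), or even in $K$. Combined with Lemma \ref{crucial} applied appropriately — to replace $w\eta$ by $w(\eta - a)$ for suitable $a \in L$ without changing the generator — this shows the minimal brick component of $\vartheta$ has degree at most $p$ over $K$, contradicting $\dep(\vartheta) = 1$.

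The main obstacle I anticipate is the bookkeeping in the middle range of $\supp(\vartheta)$: near the limit $0$ the brick components of $w\eta$ are well understood (they mimic those of $\eta$, shifted and scaled), but one must be careful that no cancellation or coincidence of supports among the finitely many ``low'' terms (the part of $a_0$, $a_1\al$, and the early terms of $\al\eta$) produces a spurious minimal brick component of degree $p^2$. The clean way around this is to argue entirely at the level of the minimal limit: $d_1(\vartheta)$ equals $\dta_{\min}^-$ where $\dta_{\min} = \min(\lm(\vartheta))$, and by Lemma \ref{limSum} together with $\lm(\eta) = \{-1/p^n : n \in \N\} \cup \{0\}$ and $\lm(w) \subseteq \{\infty\}$ (as $w \in L$, and elements of $L$ have $\lm \subseteq \Q$ with controllable structure), one shows $\dta_{\min} \le 0$ and that the brick $\tr_{\dta_{\min}}(\vartheta)$ — being essentially a single shifted copy of a component like $\be^{1/p^n}$ or $y^{-1/p^n}\al^{1+1/p^n}$ modulo $K$ — has degree dividing $p$. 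Then Lemma \ref{dep1brick} delivers $\dep(\vartheta) \ge 2$, so $\dep(M/K) = 2$.
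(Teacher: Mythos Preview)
Your overall architecture is right, and the upper bound via $\dep(\eta)=2$ is fine. But the lower-bound argument has two genuine gaps.

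First, your invocation of Lemma~\ref{crucial} is backwards. That lemma takes as input an element $s$ with $\dep(s)=1$ and a multiplier $t\in\kb^*$, and produces $a\in K$ (not $a\in L$) such that $t(s-a)$ again has a minimal brick component of full degree. You propose to feed it $s=\eta$, but $\dep(\eta)=2$, so the hypothesis fails; and you want $a\in L$, which the lemma does not give. The correct use---and this is what the paper does---is to take $s=\vartheta$ (the hypothetical depth-one generator $\vartheta=\rho\eta+\rho'$ with $\rho\in L^*$) and $t=\rho^{-1}$. One then obtains $a\in K$ with $\dep\!\left(\rho^{-1}(\vartheta-a)\right)=1$, and since $\rho^{-1}(\vartheta-a)=\eta+\rho^{-1}(\rho'-a)\in\eta+L$, the whole problem is reduced to showing that no element of the single coset $\eta+L$ has depth one. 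This reduction is the step your argument is missing.

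Second, your ``clean way'' in the last paragraph rests on the claim that $\lm(w)\subseteq\{\infty\}$ for $w\in L$. That is false: only elements of $K$ are guaranteed to have no finite limits, while elements of $L\setminus K$ typically do---for instance $\lm(\al)=\{0\}$. So your control of $\dta_{\min}$ via Lemma~\ref{limSum} does not go through for general $w\in L^*$, and the attempted direct analysis of $u+w\eta$ collapses.

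Once the reduction to $\eta+L$ is in place, the remaining case is short precisely because $p=2$: after subtracting an element of $K$ (which does not change the depth) one has $\eta+a\al$ with $a\in K$, and since only $\tr_0$ matters one may take $a$ of finite support. Then $\lm(a\al)=\supp(a)$ is finite, so there is $\ep<0$ with $\tr_{[\ep,0)}(a\al)=0$, while $\tr_{[\ep,0)}(\eta)$ still contains infinitely many brick components of degree $p$. Hence $d_1(\eta+a\al)<d_p(\eta+a\al)$ and the depth exceeds one.
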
 

\begin{proof}
	Since  $\dep(\eta)=2$, it suffices to show that every $\t\in M\setminus L$ has depth greater than one. 
Let us express the elements in $M$ in terms of the $L$-basis $1,\,\eta$.

Let us first assume that $\t\in \eta+L$. Since $\dep(\t)=\dep(\t+b)$ for all $b\in K$, we can assume that $\t=\eta+a\al$ for some $a\in K$.
Let us show that the first brick component of  $\tr_0(\t)$ has degree $p$ over $K$. Thus, $d_1(\t)<d_p(\t)$ and $\dep(\t)>1$, by Theorem \ref{OSdepth}.

Since we are only interested in $\tr_0(\t)$, we can assume that $a$ has finite support. Since $\lm(\al)=\{0\}$, Lemma \ref{limSum} shows that $\lm(a\al)=\supp(a)$ is finite.
Thus, these limits cannot ``cancel" all negative limits of $\lm(\eta)$. More precisely, there exists $\ep<0$ such that $\tr_{[\ep,0)}(a\al)=0$. Since $\tr_{[\ep,0)}(\eta)$ still has an infinite number of brick components of degree $p$, we have necessarily $d_1(\t)<d_p(\t)$.

Finally, take $\t=\rho\eta+\rho'$, for some  $\rho,\rho'\in L$, $\rho\ne0$. Let us show that the assumption $\dep(\t)=1$ leads to a contradiction. By Lemma \ref{crucial}, this condition would imply the existence of some $a\in K$ (depending on $\rho^{-1}$) such that $\dep\left(\rho^{-1}(\t-a)\right)=1$. Since $\rho^{-1}(\t-a)=\eta+\rho^{-1}(\rho'-a)\in\eta+L$, this contradicts our  previous arguments.
\end{proof}

\subsection{A non-Galois AS tower of depth two }\label{subsecEx3}
Suppose that $p>2$. The following observation is a straightforward consequence of the fact that $K$ has no finite limits.

\begin{Lem}\label{lin}
	The images of $\al$ and $\al^2$ in $L/\as(L)$ are $\F_p$-linearly independent. 	
\end{Lem}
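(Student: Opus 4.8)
The plan is to show that if some nontrivial $\F_p$-linear combination $\ell_1\al + \ell_2\al^2$ lies in $\as(L) + \F_p$ (equivalently in $\as(L)$ after adjusting by a constant), then $K$ would have a finite limit, contradicting our standing hypothesis on $K$. So I would suppose $\ell_1\al+\ell_2\al^2 = \as(\be)$ for some $\be\in L$, with $(\ell_1,\ell_2)\in\F_p^2\setminus\{(0,0)\}$, and derive a contradiction.

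First I would analyze the support and limit structure of the left-hand side. Recall $\al = \sum_{n\ge 1} y^{-1/p^n}$ is a brick with $\lm(\al)=\{0\}$, and from the computation around (\ref{alj})--(\ref{al2}) we have $\lm^2(\al^2)=\{0\}$ while $\lm(\al^2)=\{-1/p^n\mid n\in\N\}\cup\{0\}$, which is an infinite set whose only limit is $0$. The key point: if $\ell_2\ne 0$, then $\lm^2(\ell_1\al+\ell_2\al^2)=\{0\}$ by Lemma \ref{limSum} (since $\lm^2(\ell_1\al)=\emptyset$ as $\al$ is a brick), so the combination has a genuinely second-order limit; if $\ell_2=0$ and $\ell_1\ne 0$, then $\ell_1\al$ is a brick with $\lm = \{0\}$. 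Either way the element $\as(\be)$ has $0$ as its maximal limit and has infinite support.

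Next I would examine $\as(\be)=\be^p-\be$ for $\be\in L$. Using Lemma \ref{trL}, truncations of $\be$ stay in $L$, and the crucial structural fact is that for $\be\in L$ with $\lm^2(\be)=\emptyset$ (i.e. $\be$ has only finitely many finite limits, which forces the relevant truncations of $\be$ to land in $K$ by finiteness arguments as in Lemma \ref{finiteTr}), the element $\as(\be)$ cannot accumulate in the second-order sense near a finite point unless $\be$ itself already does. More precisely: I would argue that any $\be\in L$ producing $\as(\be)$ with maximal limit equal to a finite number $\dta$ and with $\lm^2(\as(\be))\ni\dta$ must itself satisfy $\lm(\be)\not\subseteq\{\infty\}$ in a way incompatible with $\be\in L$; and $\be\in L$ means $\be=\sum_{0\le j<p}a_j\al^j$ with $a_j\in K$, so $\lm^n(\be)$ is controlled by the $\lm^n(a_j\al^j)$ via Lemma \ref{limSum}, and each of these is controlled by $\supp(a_j)$ together with the (fully understood) limits of $\al^j$. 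The outcome I expect is that the only way to match the left-hand side is to have some $a_j$ with an infinite negative-accumulating support, i.e. with a finite limit — contradicting that $K$ has no finite limits.

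The main obstacle will be the bookkeeping in the last step: carefully showing that no $\be=\sum_j a_j\al^j$ with all $a_j\in K$ (hence all $\supp(a_j)$ having no finite limits) can have $\as(\be)$ equal to $\ell_1\al+\ell_2\al^2$. The cleanest route is probably to separate by whether $\ell_2=0$. If $\ell_2=0$: $\al$ has $d_1(\al)=0^-$ and $\dep(\al)=1$, so $\al\in\as(L)+\F_p$ would, after subtracting the constant, give $\al=\as(\be)$; comparing with $\as(\al)=y^{-1}\in K$ and using that $\as$ is injective modulo $\F_p$ on $\kb$, one gets $\be\in\al+\F_p$, but then $\al\in L$ would have to equal $\be^p-\be$ with $\be\in L$, and chasing $v$-values (the valuations $v(\be^p-\be)$ versus $v(\al)=-1/p$) quickly forces $\be\notin L$ — a contradiction. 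If $\ell_2\ne 0$: after scaling assume $\ell_2=1$, and use the second-order limit analysis above together with Lemma \ref{ord2} (which says precisely that an element of $L$ with $\lm^2=\{0\}$ is, up to $K$ and a scalar, the element $\al^2$ in its high truncations) to pin down $\be$ modulo the no-finite-limits hypothesis and reach the contradiction. I would write the $\ell_2=0$ case fully and then reduce the general case to it, flagging the second-order limit comparison as the delicate point.
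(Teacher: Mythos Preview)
Your argument for the case $\ell_2=0$ is broken. From $\al=\as(\be)$ together with $\as(\al)=y^{-1}$ you cannot conclude $\be\in\al+\F_p$: injectivity of $\as$ modulo $\F_p$ would require $\as(\be)=\as(\al)$, i.e.\ $\al=y^{-1}$, which is false. The fallback ``valuation chase'' does not work either: $v(\be^p-\be)=-1/p$ forces only $v(\be)=-1/p^2$, and since $K$ is perfect this value lies in $vK=vL$, so there is no contradiction with $\be\in L$. Your reduction of the case $\ell_2\ne0$ to the case $\ell_2=0$ is also only a hope, not an argument; invoking Lemma~\ref{ord2} would at best constrain $\be$, not produce the needed contradiction.

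The argument the paper has in mind (and labels ``straightforward'') avoids all second-order limit bookkeeping. Write $\be=\sum_{j=0}^{p-1}b_j\al^j$ with $b_j\in K$, use $\al^p=\al+y^{-1}$ to expand $\as(\be)$ in the same basis, and compare coefficients of $\al^i$ in $\as(\be)=\ell_1\al+\ell_2\al^2$ from $i=p-1$ downwards. One obtains successively equations of the form
\[
\as(b_i)=c\,y^{-1}+d,\qquad c,d\in k,
\]
with $c$ built from the already-determined higher $b_j$'s. Whenever $c\ne0$, Lemma~\ref{strange} shows that any solution $b_i\in\Ha$ has $0$ as a (finite) limit of its support, so $b_i\notin K$ --- contradicting the hypothesis that $K$ has no finite limits. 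This forces $c=0$ at each step, hence $b_{p-1}=b_{p-2}=\cdots=b_3=0$, then $b_2=0$ (so $\ell_2=0$), then $b_1=0$ (so $\ell_1=0$). The hypothesis on $K$ enters exactly here, through the single observation that $\as(b)=c\,y^{-1}+d$ with $c\in k^*$ has no solution in $K$; no analysis of $\lm^2$ is required.
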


Consider the AS extensions  $M_0=L(\t)$, $M=L(\eta)$, with AS generators 
\begin{equation}\label{eta}
	\t=\sum_{n=1}^\infty \al^{1/p^n},\quad\as(\t)=\al; \qquad \eta=\sum_{n=1}^\infty \al^{2/p^n},\quad \as(\eta)=\al^2.
\end{equation}
By Lemmas \ref{compos} and \ref{lin}, these two AS extensions of $L$ are linearly disjoint. Their compositum $N=L(\t,\eta)=K(\t,\eta)$ is a Galois extension of $L$ with Galois group 
\[
H:=\gal(N/L)\simeq C_p\times C_p,
\]
admitting generators $\tau,\iota\in H$ determined by:
\[
\iota(\t)=\t, \quad \iota(\eta)=\eta+1;\qquad \tau(\t)=\t+1, \quad \tau(\eta)=\eta.
\]

Take any $\sg\in\aut(N/K)$ such that $\sg(\al)=\al+1$. Clearly,
\[\ars{1.2}
\begin{array}{l}
\as(\sg(\t))=\sg(\al)=\al+1=\as(\t+c),\\ 
\as(\sg(\eta))=\sg(\al^2)=\al^2+2\al+1=\as(\eta+2\t+c),
\end{array}
\]
where $c\in k$ satisfies $\as(c)=1$. Hence, there exist $\ell,m\in\F_p$ such that
\[
\sg(\t)=\t+c+\ell,\qquad \sg(\eta)=\eta+2\t+c+m.
\]
By replacing $\sg$ with $\iota^{2\ell-m}\tau^{-\ell}\sg\in\aut(N/K)$, we may assume that $\sg$ satifies  
\begin{equation}\label{sigma}
\sg(\al)=\al+1,\qquad \sg(\t)=\t+c,\qquad \sg(\eta)=\eta+2\t+c.
\end{equation}
Now, consider the following conjugates of $\eta$:
\[
\iota^{m}\sg^\ell(\eta)=\eta+2\ell\t+\ell^2c+m,\quad (\ell,m)\in\F_p^2.
\]
Since we get $p^2$ different conjugates, we see that $N/K$ is the normal closure of $M/K$. In particular, $N/K$ is a Galois extension.

\begin{Lem}\label{Nclosure}
The  Galois group of $N/K$ is a non-commutative semidirect product
	\[
	\gal(N/K)\simeq\left(C_p\times C_p\right)\rtimes  C_p.
	\]
Moreover,	$L/K$ is the unique proper subextension of $M/K$.
\end{Lem}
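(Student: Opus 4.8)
The plan is to prove the two assertions of Lemma \ref{Nclosure} separately, using the explicit description of $\gal(N/K)$ built up in \eqref{sigma}.

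\medskip
\textbf{The Galois group.} We have already identified three automorphisms $\tau,\iota,\sg\in\gal(N/K)$ acting on the generators $\al,\t,\eta$ as in \eqref{sigma} and in the displayed formulas for $\tau,\iota$. First I would check that $\langle\tau,\iota\rangle=H=\gal(N/L)$ is normal of order $p^2$ in $G:=\gal(N/K)$ (it is the fixed subgroup of the subfield $L$, and $L/K$ is normal), and that $\sg$ has order $p$ modulo $H$ since $\sg$ moves $\al$. Hence $G=H\rtimes\langle\bar\sg\rangle$ has order $p^3$, with $\langle\bar\sg\rangle\simeq C_p$. To see the product is non-commutative, I compute the conjugation action of $\sg$ on $H$: from \eqref{sigma} one gets $\sg\tau\sg^{-1}$ and $\sg\iota\sg^{-1}$ by tracking the action on $\t,\eta$; the key point is that $\sg$ does not centralize $\iota$, because $\sg(\eta)=\eta+2\t+c$ introduces a $\t$-term, so conjugating $\iota$ (which shifts $\eta$) produces a non-trivial $\tau$-component. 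Concretely $\sg\iota\sg^{-1}=\iota$ but $\sg\tau\sg^{-1}=\tau\iota^{2}$ (or the analogous relation, to be verified by direct substitution), which is $\ne\tau$ when $p>2$. Therefore $G$ is a non-abelian semidirect product $(C_p\times C_p)\rtimes C_p$, as claimed. I would also remark that $\sg^p=1$: applying \eqref{sigma} $p$ times gives $\sg^p(\eta)=\eta+2(\t+\t+c)+\cdots$, and summing the arithmetic-type progression of $\t$-shifts together with $\Tr$-type terms collapses to $\eta$ using $\binom{p}{2}\equiv0$ and $pc=0$ in characteristic $p$ — this is what makes the complement $\langle\bar\sg\rangle$ genuinely cyclic of order $p$ rather than a larger cyclic group.

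\medskip
\textbf{Uniqueness of $L$.} For the second assertion, the subextensions $K\sub E\sub M$ with $E\ne K,M$ correspond to subgroups $G_M\subsetneq Q\subsetneq G$, where $G_M=\gal(N/M)$. Now $[M:K]=p^2$, so $[G:G_M]=p^2$ and $|G_M|=p$; I need to determine which $G_M$ this is. Since $M=L(\eta)$ and the action \eqref{sigma} fixes $M$ exactly when the automorphism fixes $\al,\eta$, the group $G_M$ is generated by the unique element of $\langle\tau,\iota,\sg\rangle$ acting trivially on $\al$ and $\eta$; from $\iota(\eta)=\eta+1$, $\tau(\eta)=\eta$, $\tau(\al)=\al$ we see $G_M=\langle\tau\rangle$. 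A subgroup $Q$ with $\langle\tau\rangle\subsetneq Q\subsetneq G$ has order $p^2$, hence index $p$, hence is normal in the $p$-group $G$ and contains the commutator subgroup $[G,G]$. The final step is to compute $[G,G]$: from the conjugation relation found above, $[\sg,\tau]=\iota^{\pm1}$ (up to powers), so $[G,G]=\langle\iota\rangle$. But then any normal $Q$ of index $p$ contains $\langle\iota\rangle$, and if it also contains $\tau$ it contains $\langle\tau,\iota\rangle=H$, forcing $Q=H$. Since $H=\gal(N/L)$, the only intermediate $E$ is $E=L$, which proves uniqueness.

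\medskip
\textbf{Main obstacle.} The routine part is the bookkeeping; the genuinely delicate step is pinning down the exact conjugation relation between $\sg$ and the generators $\tau,\iota$ of $H$ — i.e. computing $\sg\tau\sg^{-1}$ and $\sg\iota\sg^{-1}$ correctly from \eqref{sigma} — and, relatedly, verifying $\sg^p=1$. Both hinge on carefully propagating the substitution $\eta\mapsto\eta+2\t+c$, $\t\mapsto\t+c$ through iterated composition and using $p\mid\binom{p}{2}$ and $pc=0$ in characteristic $p$ to see the cross-terms vanish. Once the single relation $[\sg,\tau]=\iota^{\pm1}$ (equivalently, $[G,G]=\langle\iota\rangle$) is established, both conclusions — the structure $(C_p\times C_p)\rtimes C_p$ and the uniqueness of $L$ as a proper subextension of $M/K$ — follow by elementary $p$-group theory.
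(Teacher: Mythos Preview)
Your approach is essentially the same as the paper's: identify $H=\gal(N/L)=\langle\tau,\iota\rangle$ as a normal complement, exhibit the splitting via $\langle\sg\rangle$, compute the conjugation action of $\sg$ on $H$, and then translate the subextension question into a question about subgroups of $G$ containing $\gal(N/M)=\langle\tau\rangle$. The paper leaves the last step as ``an easy exercise''; your argument via $[G,G]$ is a clean way to carry it out.

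Two small slips to fix. First, your prose says ``$\sg$ does not centralize $\iota$'', but in the next line you (correctly) write $\sg\iota\sg^{-1}=\iota$; it is $\tau$ that $\sg$ fails to centralize. Second, the actual commutator is $[\sg,\tau]=\sg\tau\sg^{-1}\tau^{-1}=\iota^{-2}$ (the paper records $\sg\tau\sg^{-1}=\iota^{-2}\tau$), not $\iota^{\pm1}$. Since $p>2$ in this section, $\iota^{-2}$ still generates $\langle\iota\rangle$, so your conclusion $[G,G]=\langle\iota\rangle$ survives, but the exponent should be corrected. With these adjustments your argument is complete and matches the paper's.
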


\begin{proof}
Denote $G=\gal(N/K)$, $H=\gal(N/L)\sub G$ and $C=\gen{\sg}\sub G$. Since $L/K$  is  a Galois extension, we have a  short exact sequence
\[
1\,\lra\,H\,\hooklongrightarrow \,G\,\longtwoheadrightarrow\,\gal(L/K)\,\lra 1,
\]
where $H\hookrightarrow G$ is the inclusion and $G\twoheadrightarrow \gal(L/K)$ is the restriction to $L$. The sequence splits via the group isomorphism $\gal(L/K)\to C$ mapping $\sg_{\mid L}$ to $\sg$.
 
Hence, $G$ is the semidirect product, $G=H\rtimes_\varphi C$, where $\varphi$ is the group homomorphism $\varphi\colon C\,\to\,\aut(H)$ determined by
\[
\varphi(\sg)(x)=\sg x\sg^{-1}\quad\mbox{ for all }\ x\in H.  
\]  
From (\ref{sigma}) one deduces that 
\[
\sg \iota \sg^{-1}=\iota,\qquad \sg\tau\sg^{-1}=\iota^{-2}\tau.
\]
This completely determines the structure of $G$. Note that the center of $G$ is the subgroup generated by $\iota$ and $G=\gen{\sg,\tau}$ is generated by $\sg$ and $\tau$.

The subfield $M\sub N$ is the field fixed by $\tau$, while $L$ is the field fixed by the subgroup  $\gen{\tau,\iota}$.
By Galois theory, $L/K$ is the unique proper subextension of $M/K$ if and only if $\gen{\tau,\iota}$ is the unique proper subgroup of $G$ containing $\tau$. Checking this property  is an easy exercise.
\end{proof}\e

The aim of this section is to show that $\dep(M/K)=2$. To this end, we shall show that every element in $ M=L+L\eta+\cdots+ L\eta^{p-1}$ with positive degree as a polynomial in $\eta$ with coefficients in $L$, has depth greater than one. 

Let us start by computing the brick components of $\eta$. Consider the brick  of degree $p$ that we introduced in Section \ref{subsecAbh}:
\[
\be:=y^{-2/p}+2y^{-1/p}\al^{1/p}\in L.
\]
The decomposition of $\al^2$ given in (\ref{al2}), combined with (\ref{eta}), yield the following decomposition of $\eta$ as a sum  of brick components of degree $p$ with disjoint support:
\begin{equation}\label{brickEta}
\eta=\beta^{1/p}+2\beta^{1/p^2}+\cdots +n\be^{1/p^n}+\cdots	
\end{equation} 
In particular, $\dep(\eta)=2$ (by Theorem \ref{OSdepth}) and $\lm^2(\eta)=\{0\}$.
Also, 
\begin{equation}\label{etaJn}
\tr_{J_n}(\eta)=n\be^{1/p^n},\qquad J_n:=[-1/p^{n},-1/p^{n+1}).
\end{equation}

\begin{Lem}\label{j=1}
All elements in $\eta+L$ have depth greater than one.	
\end{Lem}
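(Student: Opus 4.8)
I want to show that every $\t\in\eta+L$ has $\dep(\t)>1$. By Theorem \ref{OSdepth}, it suffices to prove that for such $\t$ the minimal brick component of $\tr_{\dta}(\t)$ (for the appropriate truncation bound $\dta$) has degree strictly less than $n=\deg_K\t=p^2$; equivalently, $d_1(\t)<d_{p^2}(\t)$, i.e. $\t$ does not admit an Okutsu sequence of length one. So I would fix $\t=\eta+\rho$ with $\rho\in L$, and since $\dep(\t)=\dep(\t+b)$ for all $b\in K$ (translation by constants does not change the valuation on $\kx$, hence not the Okutsu data), I may normalize $\rho$ modulo $K$.

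First I would analyze the limit structure. By \eqref{brickEta}, $\eta=\sum_{n\ge1} n\be^{1/p^n}$ with disjoint supports, $\lm(\eta)=\{-1/p^n\mid n\in\N\}\cup\{0\}$ and $\lm^2(\eta)=\{0\}$. Write $\rho=\sum_{i=0}^{p-1}a_i\al^i$ with $a_i\in K$. Using $\lm^j(\al^j)=\{0\}$, $\lm(\al)=\{0\}$ and Lemma \ref{limSum}, the element $\rho$ contributes only finitely many negative limits unless its top term involves $\al^2$ with $a_2\in k^*$; in general $\lm^2(\rho)\subseteq\lm^2(a_2\al^2)=\supp(a_2)$, a finite set (since $\supp(a_2)$ is well-ordered and bounded above by $0$, but actually $K$ has no finite limits so $\supp(a_2)$ is either finite or has $\infty$ as its only limit, hence $\lm^2(\rho)$ is finite). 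The point is that $\rho$ has only finitely many negative limits of order $\le 2$.

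Next, the key computation: I claim the minimal brick component of $\tr_0(\t)$ has degree $p^2$ over $K$. Because $\rho$ has only finitely many negative limits, there is $\ep<0$ with $\tr_{[\ep,0)}(\rho)\in K$ (a finite sum of $y^q$-terms, each in $K$ by Corollary \ref{guess0}), so modulo $K$ we may assume $\tr_{[\ep,0)}(\rho)=0$. Then on the interval $[\ep,0)$, $\t$ agrees with a tail of $\eta$, namely $\tr_{[\ep,0)}(\t)=\sum_{n\ge n_0}n\be^{1/p^n}$ for suitable $n_0$; by \eqref{etaJn} this is a genuinely infinite sum of bricks of degree $p$ with a strictly increasing sequence of limits accumulating at $0$, so $0\in\lm^2(\t)$ and $\lm(\t)$ accumulates at $0$ from below. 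Hence $d_1(\t)=\dta_{\min}(\t)^-$ for some $\dta_{\min}\le\ep<0$, while the whole "second-order" structure at $0$ shows $\t$ is not a brick and the first brick component of $\tr_0(\t)$ has degree $p$ (coming from some $\be^{1/p^n}$, possibly corrected by a term of $\rho$) — in any case degree $\le p<p^2$. So $d_1(\t)<d_{p^2}(\t)$ and $\dep(\t)>1$ by Theorem \ref{OSdepth}.

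\textbf{The main obstacle.} The delicate point is controlling the \emph{degree} of the minimal brick component precisely — one must make sure that after subtracting off the finitely many low-order limits of $\rho$, what remains near $\dta_{\min}$ is still a brick of degree $p$ (not accidentally of degree $1$, which would be fine for $\dep>1$, nor of degree $p^2$, which would be fatal). The cleanest route, which I would take, is not to identify $\dta_{\min}$ itself but simply to exhibit \emph{one} limit $\dta\in\lm(\t)$ with $\dta<0$ such that $\tr_\dta(\t)$ has degree $<p^2$ over $K$: since $\tr_\dta(\t)$ is then a proper factor-field element, $d_1(\t)\le\dta^-<\infty^-=d_{p^2}(\t)$. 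Concretely, for $n$ large the interval $J_n$ avoids all limits of $\rho$, so $\tr_{J_n}(\t)=\tr_{J_n}(\eta)=n\be^{1/p^n}$, a brick of degree $p$; thus $\tr_{-1/p^{n+1}}(\t)$ has a proper strict truncation lying in $K(\be)$, forcing $\deg_K\tr_{-1/p^{n+1}}(\t)\le[K(\al,\be):K]\le p^2$, and a short argument (the limit $-1/p^{n}$ is a genuine accumulation point inside $\supp(\t)$, so the truncation is strict and of degree $\le p$ in fact) gives $d_1(\t)$ strictly below $d_{p^2}(\t)$. Assembling these observations is routine once the limit bookkeeping above is in place.
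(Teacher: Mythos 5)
There is a genuine gap. Your argument hinges on the assertion that ``$\rho$ has only finitely many negative limits,'' from which you deduce that for $n$ large the interval $J_n$ avoids all limits of $\rho$, so that $\tr_{J_n}(\t)=\tr_{J_n}(\eta)=n\be^{1/p^n}$. This assertion is false: elements of $L$ can have infinitely many negative limits accumulating at $0$, and these may align exactly with the limits of $\eta$. A concrete counterexample is $\rho=-\al^2$, for which $\lm(\rho)=\{-1/p^n\mid n\in\N\}\cup\{0\}$ and $\tr_{J_n}(\rho)=-\be^{1/p^n}\notin K$ for every $n$; more generally this occurs whenever $\rho=\sum_i a_i\al^i$ has $a_2\ne0$ (or $a_j\ne 0$ for some $j\ge 2$). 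You correctly observe that $\lm^2(\rho)$ has finitely many negative elements, but this is a statement about second-order limits and does not bound the first-order limits $\lm(\rho)$, which is what your truncation step requires. Consequently there need not exist any $\ep<0$ with $\tr_{[\ep,0)}(\rho)\in K$, and the key step of your proof fails precisely in the hard case, namely when $\rho$ can cancel brick components of $\eta$.

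This is exactly the case the paper's proof is built to handle. After the same first reduction (if $\tr_{-1/p}(\rho)$ already carries a degree-$p$ brick component, we are done; otherwise it lies in $K$), the paper argues by contradiction: if the blocks $n\be^{1/p^n}$ of $\eta$ were all cancelled by $\rho$, then $\tr_{J_n}(\eta+\rho)\in K$ for every $n$, and Lemma~\ref{ord2} (whose whole purpose is to analyze $\rho\in L$ with $\lm^2(\rho)=\{0\}$, i.e.\ precisely the $\rho$ your bookkeeping cannot dispatch) forces $\tr_{J_n}(\rho)\in\la\be^{1/p^n}+K$ for some fixed $\la\in k^*$ and all large $n$. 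Then $(n+\la)\be^{1/p^n}$ would have to lie in $K$ for all large $n$, which is impossible since $\be\notin K$ and $\la$ is a constant while $n$ varies. That growing-coefficient argument, resting on Lemma~\ref{ord2}, is the crux of the proof and is absent from your proposal; without it, the proposal only covers the easy case where $\rho$ has no $\al^2$-type accumulation near $0$.

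(Two smaller points: your phrase ``I claim the minimal brick component of $\tr_0(\t)$ has degree $p^2$ over $K$'' is presumably a slip and should read ``degree strictly less than $p^2$''; and the final assertion that the first brick component has ``degree $\le p$'' is fine as a conclusion, but again the derivation leading to it relies on the unjustified claim above.)
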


\begin{proof}
	Take $\rho\in L$. If $\tr_{-1/p}(\rho)$ has some brick component of degree $p$, then $\dep(\eta+\rho)>1$ because $\tr_{-1/p}(\eta+\rho)=\tr_{-1/p}(\rho)$  has some brick component of degree $p$. Thus, we may assume that $\tr_{-1/p}(\rho)$ belongs to $K$.

	Suppose that all $n\be^{1/p^n}$, brick components of $\eta$ of degree $p$,  are ``cancelled" by the addition of $\rho$. By (\ref{etaJn}), this would imply
\[
\tr_{J_n}(\eta+\rho)\in K\quad \mbox{ for all }\ n\in\N.
\]
By Lemma \ref{ord2}, there would exist $n_0\in\N$ and $\la\in k^*$ such that	
\[
(n+\la)\be^{1/p^n}\in K\quad\mbox{ for all }\ n\ge n_0,
\]
which is impossible, because $\be\in L\setminus K$ and $\la\in k$ is constant.
\end{proof}\e

\begin{Lem}\label{corCrucial}
Take any $j\in\N$, $0<j<p$. If all elements in $\eta^j+L\eta^{j-1}+\cdots+L$ have depth greater than one, then all elements  in $L\eta^j+L\eta^{j-1}+\cdots+L$ have depth greater than one.	
\end{Lem}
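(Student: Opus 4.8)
The statement is exactly the content of Lemma~\ref{crucial} transferred from a single "brick-like" element of depth one to an arbitrary leading coefficient. So the plan is to reduce to Lemma~\ref{crucial} by a clearing-denominators argument, combined with the hypothesis that elements of $\eta^j+L\eta^{j-1}+\cdots+L$ have depth greater than one.

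First I would take an arbitrary $\t = \rho\eta^j + \rho'$ with $\rho \in L^*$ and $\rho' \in L\eta^{j-1}+\cdots+L$, and suppose for contradiction that $\dep(\t)=1$. The element $\eta^j$ has $\supp(\eta^j)\sub vK$ (since $\supp(\al)\sub vK$ and $vK$ is a group, using Corollary~\ref{guess0} and Lemma~\ref{immediate}), and $\dep(\eta^j) = 1$? — no, that is false for $j\geq 1$ here, so I cannot apply Lemma~\ref{crucial} to $s=\eta^j$ directly. Instead the right move is: apply Lemma~\ref{crucial}-style reasoning to control the \emph{leading truncation} of $\t$. Concretely, by Lemma~\ref{trL}, for all $\dta\in\Ri$ we have $\tr_\dta(\rho'),\tr_\dta(\rho\eta^j)\in$ (the relevant $L$-module), so truncations stay inside $M$. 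Write $\rho = \la y^q + \rho''$ with $\la\in k^*$, $v(\rho'')>q$. As in the proof of Lemma~\ref{crucial}, choose $a := \tr_{\dta_0-\ep}(\eta^j)$ for a suitable $\dta_0,\ep$ so that $\rho^{-1}(\t-a)$ differs from $\eta^j$ by an element of $L\eta^{j-1}+\cdots+L$. The key point is the multiplicativity of depth under this operation: if $\dep(\t)=1$, then by Lemma~\ref{crucial} (applied with $s$ replaced by a suitable brick and $t = \rho^{-1}$) one gets that there exists $a\in K$ with $\dep(\rho^{-1}(\t-a)) = 1$, and $\rho^{-1}(\t-a) \in \eta^j + L\eta^{j-1}+\cdots+L$.

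More carefully: $\dep$ is invariant under $K$-translation and the argument of Lemma~\ref{crucial} shows that if $\dep(\t)=1$ then $\dep(\rho^{-1}\t)$ is either $1$ or the minimal brick component of $\rho^{-1}(\t-a)$ (for an appropriate $a\in K$) has the full degree $\deg_K \t = \deg_K(\rho^{-1}(\t - a))$, hence $\dep(\rho^{-1}(\t-a)) = 1$ by Lemma~\ref{dep1brick}. Since $\rho^{-1}\eta^j\cdot\rho = \eta^j$ and $\rho^{-1}\rho' \in L\eta^{j-1}+\cdots+L$, we obtain $\rho^{-1}(\t-a) = \eta^j + (\text{element of } L\eta^{j-1}+\cdots+L)$, which is an element of $\eta^j+L\eta^{j-1}+\cdots+L$. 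By the hypothesis of the lemma, this element has depth greater than one — a contradiction. Hence $\dep(\t)>1$, as required.

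The main obstacle is the correct bookkeeping in the reduction: I must check that multiplying $\t$ by $\rho^{-1}$ and subtracting the right $a\in K$ keeps the element inside $\eta^j+L\eta^{j-1}+\cdots+L$ and preserves degree over $K$, so that Lemma~\ref{dep1brick} applies; this is where one must verify that $\deg_K(\rho^{-1}(\t-a)) = \deg_K \t$ (both equal $p$ times $[L:K]$ as $j>0$, or more precisely both generate $M$ over $K$ — here one uses that $\eta^j$ together with lower powers still generates $M/L$ since $0<j<p$ and $M=L(\eta)$ is degree $p$). Once that degree-preservation is established, the contradiction is immediate from the hypothesis, and Lemma~\ref{crucial} does all the real analytic work of producing the brick component of full degree.
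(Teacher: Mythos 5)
Your final argument (the ``more carefully'' paragraph) is exactly the paper's proof: apply Lemma~\ref{crucial} with $s=\zeta$ and $t=\rho^{-1}$ to produce $a\in K$ with $\dep\left(\rho^{-1}(\zeta-a)\right)=1$, note that $\rho^{-1}(\zeta-a)\in\eta^j+L\eta^{j-1}+\cdots+L$, and contradict the hypothesis. The false start about applying Lemma~\ref{crucial} to $\eta^j$ or to ``a suitable brick'' is an unnecessary detour (the $s$ in Lemma~\ref{crucial} must be the full element $\zeta$, whose depth is assumed to be one, not a piece of it), and you are in fact slightly more explicit than the paper about the needed degree check $\deg_K\rho^{-1}(\zeta-a)=\deg_K\zeta$ that licenses the ``in particular'' clause of Lemma~\ref{crucial}.
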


\begin{proof}
Take $\zeta=\rho\eta^j+\ga$, with $\rho\in L^*$ and $\deta(\ga)<j$. If $\dep(\zeta)=1$, then  there exists   $a\in K$ (depending on $\rho^{-1}$) such that $\dep\left(\rho^{-1}(\zeta-a)\right)=1$, by Lemma \ref{crucial}. Since $\rho^{-1}(\zeta-a)=\eta^j+\rho^{-1}(\ga-a)\in\eta^j+L\eta^{j-1}+\cdots+L$, this contradicts our assumptions.
\end{proof}\e

For every $\zeta\in M$, let us denote by $\deg_\eta(\zeta)$ the degree of $\zeta$ as a polynomial in $\eta$ with coefficients in $L$, of degree less than $p$. 
So far, we have seen that all elements $\zeta\in M$ with $\deg_\eta(\zeta)=1$ have $\dep(\zeta)>1$. From now on, we fix an index $j\in\N$ such that $1<j<p$. We shall use an inductive argument to show that all elements $\zeta\in M$ with $\deta(\zeta)=j$  have depth greater than one too,  
assuming that this holds for all elements of a smaller degree in $\eta$.

Our first step is to obtain information about the brick components of $\eta^j$ in a similar way as we did in Section \ref{subsecAbh} for $\al^j$.

Focussing  in the decomposition (\ref{brickEta}), for all $ n\in\N$ we define
\begin{equation}\label{etaj}
\eta=\be^{1/p}+2\be^{1/p^2}+\cdots +(n-1)\be^{1/p^{n-1}}+\eta_n,\qquad \eta_n:=\sum_{N=n}^\infty N\be^{1/p^N}.
\end{equation}

Now, we apply Newton's formula:
\[
\eta^j=\left(\be^{1/p}+\eta_2\right)^j=\be^{j/p}+j\be^ {(j-1)/p}\eta_2+\cdots+j\be^{1/p}\eta_2^{j-1}+\eta_2^j.
\] 
An iteration of this formula leads to the following relationship for all $n\in\N$:
\[
\eta_n^j=\ga_n+\eta_{n+1}^j,\qquad \ga_n:=n^j\be^{j/p^n}+jn^{j-1}\be^ {(j-1)/p^n}\eta_{n+1}+\cdots+jn\be^{1/p^n}\eta_{n+1}^{j-1},
\]
where we agree that $\eta_1=\eta$. This leads to a decomposition
\begin{equation}\label{gammas}	
\eta^j= \sum_{n\in\N}\ga_n.
\end{equation}
Now,  these summands $\ga_n$ are neither bricks nor have disjoint supports. Indeed, whenever $n\ne0$ in $k$, we have
\[
\supp(\ga_n)\sub [-2j/p^{n+1},-1/p^{n+1}),\qquad v\left(\eta_{n+1}^j\right))=-2j/p^{n+2}.
\]
Thus,  the $\ga_n$'s have disjoint supports  only for $j\le (p-1)/2$.
We shall rearrange some terms, in order to get a decomposition of $\eta^j$ in which all summands have disjoint support. To this purpose, we need a relevant observation. 

\begin{Lem}\label{CompsTrunc}
Take $j\in\N$ such that $0<j<p$.  For all $\rho\in L$, $\dta\in\R$, we have
\begin{equation}\label{trntrn}
\tr_\dta(\rho\eta^j)\in \tr_\dta(\rho)\eta^j +L\eta^{j-1}+\cdots +L. \end{equation}
\end{Lem}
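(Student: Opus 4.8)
The plan is to argue by induction on $j$, the case $j=1$ serving as base, using throughout that $\tr_\dta$ is $k$-linear and, crucially, that $\supp(\eta^N)\subseteq(-\infty,0)$ for every $N\ge1$; the latter follows from the brick decomposition \eqref{brickEta}, since $\supp(\eta)\subseteq[-2/p^2,0)$.

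First I reduce the statement to a tail estimate. Write $\rho=\tr_\dta(\rho)+\rho_1$, where $\rho_1$ collects the terms of $\rho$ of exponent $\ge\dta$, so that $\rho_1\in L$ (Lemma \ref{trL}) and $v(\rho_1)\ge\dta$; the substantive case, which I treat, is $v(\rho_1)>\dta$. Since $\supp\big(\tr_\dta(\rho)\big)\subseteq(-\infty,\dta)$ and $\supp(\eta^j)\subseteq(-\infty,0)$, the product $\tr_\dta(\rho)\,\eta^j$ has support contained in $(-\infty,\dta)$, hence is left unchanged by $\tr_\dta$; therefore
\[
\tr_\dta(\rho\eta^j)=\tr_\dta(\rho)\,\eta^j+\tr_\dta(\rho_1\eta^j),
\]
and it is enough to prove $\tr_\dta(\rho_1\eta^j)\in L\eta^{j-1}+\cdots+L$ whenever $\rho_1\in L$ and $v(\rho_1)>\dta$.

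For the tail estimate I use the decomposition \eqref{gammas}, $\eta^j=\sum_{n\in\N}\ga_n$ with $\ga_n=\sum_{l=1}^{j}\binom{j}{l}n^{l}\be^{l/p^{n}}\eta_{n+1}^{j-l}$. On one hand, each $\ga_n$ has $\eta$-degree at most $j-1$ over $L$: indeed $\be^{l/p^n}\in L$, and $\eta_{n+1}=\eta-P_n$ with $P_n:=\sum_{N=1}^{n}N\be^{1/p^{N}}\in L$, so $\eta_{n+1}^{j-l}\in\eta^{j-l}+L\eta^{j-l-1}+\cdots+L\subseteq L+L\eta+\cdots+L\eta^{j-1}$ for every $l\ge1$; hence $\rho_1\ga_n\in L+L\eta+\cdots+L\eta^{j-1}$ for all $n$. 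On the other hand, $\ga_n=0$ when $p\mid n$, while for $p\nmid n$ the summand $l=j$ dominates and $v(\ga_n)=-2j/p^{n+1}$; therefore $v(\rho_1\ga_n)=v(\rho_1)-2j/p^{n+1}\to v(\rho_1)>\dta$, so there is $n_0$ with $\tr_\dta(\rho_1\ga_n)=0$ for all $n\ge n_0$. Because $\tr_\dta$ commutes with the (well-ordered, locally finite) Hahn sum $\sum_n\rho_1\ga_n$, one obtains the \emph{finite} sum
\[
\tr_\dta(\rho_1\eta^j)=\sum_{0\le n<n_0}\tr_\dta(\rho_1\ga_n).
\]
Writing $\rho_1\ga_n=\sum_{i=0}^{j-1}c_{n,i}\eta^i$ with $c_{n,i}\in L$, the induction hypothesis (the lemma for each exponent $i<j$) gives $\tr_\dta(c_{n,i}\eta^i)\in\tr_\dta(c_{n,i})\eta^i+L\eta^{i-1}+\cdots+L\subseteq L+L\eta+\cdots+L\eta^{j-1}$, so each summand, hence the whole finite sum, lies in $L+L\eta+\cdots+L\eta^{j-1}$, as wanted. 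The base case $j=1$ is the same argument, now with $\ga_n=n\be^{1/p^n}\in L$ and $\tr_\dta(\rho_1\ga_n)\in L$ supplied by Lemma \ref{trL}.

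The delicate point --- the one I expect to be the main obstacle --- is that one may \emph{not} reason ``each $\ga_n$ has $\eta$-degree $<j$, hence so does $\eta^j=\sum_n\ga_n$'': this is false, since extracting the $L$-coordinates of an element in the basis $1,\eta,\dots,\eta^{p-1}$ is not continuous for infinite Hahn sums. What rescues the argument is precisely the valuation estimate $v(\rho_1\ga_n)\to v(\rho_1)>\dta$, which forces all but finitely many $\ga_n$ to disappear after truncating at $\dta$, so that the degree bound may legitimately be applied to a finite sum. Making this finiteness, together with the commutation of $\tr_\dta$ with the pertinent Hahn sums, fully precise is where the real care is needed.
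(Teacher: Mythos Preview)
Your argument is correct and takes a somewhat different route from the paper's. The paper does not split $\rho=\tr_\dta(\rho)+\rho_1$; instead it expands $\rho$ monomial-by-monomial, isolates the first index $i_1$ with $q_{i_1}>\dta$, and for $i\ge i_1$ writes $\tr_\dta(\la_iy^{q_i}\eta^j)=\la_iy^{q_i}\tr_{\dta-q_i}(\eta^j)$. The induction is then organized around an auxiliary Claim --- that $\tr_\ep(\eta^j)\in L\eta^{j-1}+\cdots+L$ for every $\ep<0$ --- proved in tandem with the lemma via the $\ga_n$-decomposition. Finiteness of the resulting sum is obtained not by your valuation estimate $v(\ga_n)\to0$, but by invoking Lemma~\ref{finiteTr}: the family $(\dta-q_i)_{i\ge i_1}$ is decreasing and well-ordered for the reverse order, and $\eta^j$ has $\lm^{j+1}(\eta^j)=\emptyset$, so only finitely many distinct truncations $\tr_{\dta-q_i}(\eta^j)$ occur; the tail is then regrouped as a finite $L$-combination of these. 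Your path is more direct --- it dispenses with both the auxiliary Claim and Lemma~\ref{finiteTr} --- at the price of having to expand each $\rho_1\ga_n$ in the $L$-basis $1,\eta,\ldots,\eta^{j-1}$ before invoking the induction hypothesis termwise, whereas the paper's Claim settles the pure case $\rho=1$ once and for all.

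One boundary case you wave past deserves an explicit sentence. If $\dta\in\supp(\rho)$ then $v(\rho_1)=\dta$ and your tail estimate $v(\rho_1\ga_n)\to v(\rho_1)>\dta$ genuinely fails. In fact the statement as written is false at that boundary (take $\rho=y^\dta$: then $\tr_\dta(\rho)=0$ while $\tr_\dta(y^\dta\eta^j)=y^\dta\eta^j$, which has $\eta$-degree $j$); the paper's own proof shares this sloppiness when it identifies $\sum_{q_i\le\dta}\la_iy^{q_i}$ with $\tr_\dta(\rho)$. The fix is trivial --- peel off the single monomial $\la y^\dta$ from $\rho_1$ and absorb it into the leading coefficient, or equivalently read the leading term as the closed truncation $\sum_{q\le\dta}$ --- but ``the substantive case, which I treat'' is not quite an argument, and you should say this explicitly.
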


\begin{proof}
Consider the following particular case of (\ref{trntrn}), corresponding to $\rho=1$:\e

\noindent{\bf Claim. }For all $\ep<0$, we have $\tr_\ep(\eta^j)\in L\eta^{j-1}+\cdots +L$.\e

The Claim is true for $j=1$, because the decomposition (\ref{brickEta}) and Lemma \ref{trL} show that $\tr_\ep(\eta)\in L$. Also, we may consider that (\ref{trntrn}) is an empty statement for $j=0$. 

We proceed by induction. We shall show that, assuming that the Claim holds for $j$ and (\ref{trntrn}) holds for $j-1$, then (\ref{trntrn}) holds for $j$ and the Claim holds for $j+1$.
	
	
	Write $\rho=\sum_{i\in I}\la_i y^{q_i}$ for some $\la_i\in k^*$, $q_i\in\Q$. Let $i_1\in I$ be the minimal index such that $q_{i_1}>\dta$. 
	Since $\max\left(\lm(\eta^j)\right)=0$, we have $\max\left(\lm(\la_i y^{q_i}\eta^j)\right)=q_i\le\dta$ for all $i<i_1$, so that  $\tr_\dta(\la_i y^{q_i}\eta^j)=\la_i y^{q_i}\eta^j$.
	By Lemma \ref{trL},
	\[
	\tr_\dta\left(\sum_{i<i_1}\la_i y^{q_i}\eta^j\right)=\sum_{i<i_1}\la_i y^{q_i}\eta^j=\tr_{\dta}(\rho)\eta^j\in L\eta^j.
	\]
	Thus, we need only to show that 
	\[
	\zeta:=\tr_\dta\left(\sum_{i\ge i_1}\la_i y^{q_i}\eta^j\right)\in L\eta^{j-1}+\cdots+L.
	\]
	For every summand, the Claim for $j$ shows that
	\[
	\tr_\dta\left(\la_i y^{q_i}\eta^j\right)=\la_i y^{q_i} \tr_{\dta-q_i}(\eta^j)\in  L\eta^{j-1}+\cdots+L.
	\] 
	By Lemma \ref{finiteTr}, these truncations determine a finite number of elements in $M$, all of them of degree less than $j$ in $\eta$.
	Let $\zeta_1,\dots,\zeta_m$ be all these truncations, ordered by decreasing end of  support.
	Take
	\[
	i_1<i_2<\cdots <i_m,
	\]
	where each $i_\ell\in I$ is minimal with the property that   $\tr_{\dta-q_{i_\ell}}(\eta^j)=\zeta_\ell$. We can express $\zeta$ as a finite sum:
	\[
	\zeta=\sum_{\ell=1}^{m-1}\tr_{[q_{i_\ell},q_{i_{\ell+1}})}(\rho)\zeta_i.
	\]
	By Lemma \ref{trL}, $\tr_{[q_{i_\ell},q_{i_{\ell+1}})}(\rho)\in L$, so that $\zeta$ belongs to  $L\eta^{j-1}+\cdots+L$.

Finally, the Claim for $j+1$ follows  from the decomposition (\ref{gammas}) for $j+1$. Indeed,  $\deta(\ga_n)=j$ for all $n$, and only a finite number of $\ga_n$ will have $\tr_\ep(\ga_n)\ne0$. By  (\ref{trntrn}) for $j$, all these truncations belong to $M$ and have degree in $\eta$ less than $j+1$.  
\end{proof}\e

Let us proceed to the promised rearrangement in the decomposition (\ref{gammas}). Note that, for all $n\in\N$ we have 
\[
v\left(\eta_{n+2}^j\right)=-2j/p^{n+3}>-1/p^{n+1},
\]
because $p^2>2j$. Hence, for $N>n+1$, the summands $\ga_N$ have support disjoint with $\supp(\ga_n)$. Also, the last summand of $\ga_{n+1}$ does not contribute to $\supp(\ga_n)$ either:
\[
v\left(j(n+1)\be^{1/p^{n+1}}\eta_{n+2}^{j-1}\right)=-\dfrac2{p^{n+2}}-\dfrac{2(j-1)}{p^{n+3}}>-1/p^{n+1},
\] 
because $p^2>2p+2(j-1)$. Thus, for all $n>1$, we consider:
\[
\xi_n:=\tr_{-1/p^{n}}\left(n^j\be^{j/p^n}+jn^{j-1}\be^ {(j-1)/p^n}\eta_{n+1}+\cdots+\comb{j}2n^2\be^{2/p^n}\eta_{n+1}^{j-2}\right).
\]
and we substract this part from each $\ga_n$ and add it to $\ga_{n-1}$:
\[
B_n:=\ga_n-\xi_n+\xi_{n+1} \quad \mbox{ for all }\ n\in\N,
\]
where we agree that $\xi_1=0$. 

Note that $\eta_n\in\eta+L$ for all $n$. Hence, Lemma \ref{CompsTrunc} shows that all $\xi_n\in M$ have degree smaller than $j-1$ in $\eta$.
Thus, these ``blocks" $B_n$ belong to $M$ and have $\deta(B_n)=j-1$, with leading coefficient
$jn\be^{1/p^n}$ (that of $\ga_n$). Moreover, one checks easily that
\[
\supp(B_1)\sub[-2j/p^2,-1/p^2),\qquad \supp(B_n)\sub J_n:=[-1/p^n,-1/p^{n+1}),
\]
for all $n>1$. We have proven  the following observation.

\begin{Lem}\label{CompsEtaj}
For all $1<j<p$, the element $\eta^j$ admits a decomposition as a sum of elements in $M$ of disjoint support
\[
\eta^j= \sum_{n\in\N}B_n,
\]
such that $B_n\in jn\be^{1/p^n}\eta^{j-1}+L\eta^{j-2}+\cdots+L$.
\end{Lem}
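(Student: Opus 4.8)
The goal is to establish a disjoint-support decomposition $\eta^j = \sum_{n\in\N} B_n$ with $B_n \in jn\be^{1/p^n}\eta^{j-1} + L\eta^{j-2}+\cdots+L$, for $1<j<p$. I would simply verify the three bulleted facts needed, all of which have already been prepared in the surrounding text, and then assemble them. First, starting from the decomposition (\ref{gammas}), $\eta^j = \sum_{n\in\N}\ga_n$ with $\deta(\ga_n)=j$ and leading coefficient (in $\eta$) equal to $jn\be^{1/p^n}$; the issue to fix is that the $\ga_n$ do not have disjoint supports and are not themselves in the stated form. I would introduce the ``correction terms'' $\xi_n = \tr_{-1/p^{n}}\left(n^j\be^{j/p^n}+jn^{j-1}\be^{(j-1)/p^n}\eta_{n+1}+\cdots+\comb{j}{2}n^2\be^{2/p^n}\eta_{n+1}^{j-2}\right)$ exactly as written, set $B_n := \ga_n - \xi_n + \xi_{n+1}$ (with $\xi_1=0$), and note the telescoping identity $\sum_n B_n = \sum_n \ga_n = \eta^j$, which is valid because for each fixed exponent only finitely many $\ga_n$ contribute and the $\xi_n$ cancel in pairs.

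Second, I would check $B_n \in M$ and $\deta(B_n)\le j-1$ with leading coefficient $jn\be^{1/p^n}$. The point is that each $\eta_n$ lies in $\eta + L$ (from the definition (\ref{etaj}) and Lemma \ref{trL}), so every monomial $\be^{(j-i)/p^n}\eta_{n+1}^{i}$ for $i\le j-1$ lies in $M$; applying Lemma \ref{CompsTrunc} to each such term shows that $\tr_{-1/p^n}$ of a combination of them stays in $L\eta^{j-2}+\cdots+L$, hence $\deta(\xi_n)\le j-2$. Since $\ga_n = jn\be^{1/p^n}\eta_{n+1}^{j-1} + (\text{lower-degree-in-}\eta\text{ terms})$ and $\eta_{n+1}^{j-1}\in \eta^{j-1}+L\eta^{j-2}+\cdots+L$ again by Lemma \ref{CompsTrunc} (or by the same $\eta_{n+1}\in\eta+L$ observation combined with a binomial expansion), we get $\ga_n \in jn\be^{1/p^n}\eta^{j-1} + L\eta^{j-2}+\cdots+L$, and subtracting $\xi_n$ and adding $\xi_{n+1}$ (both of $\eta$-degree $\le j-2$) does not change the leading term. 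So $B_n$ has the asserted form.

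Third — and this is the only place where a genuine inequality among exponents must be exhibited — I would verify the support estimates
\[
\supp(B_1)\sub[-2j/p^2,-1/p^2),\qquad \supp(B_n)\sub J_n=[-1/p^n,-1/p^{n+1})\ \ (n>1),
\]
from which disjointness of supports is immediate since the intervals $J_n$ are pairwise disjoint and lie to the right of $[-2j/p^2,-1/p^2)$. The two key numerical inputs, both already recorded just before the lemma, are: $v(\eta_{n+2}^j) = -2j/p^{n+3} > -1/p^{n+1}$ (uses $p^2>2j$), so $\ga_N$ for $N>n+1$ has support disjoint from that of $\ga_n$; and $v\!\left(j(n+1)\be^{1/p^{n+1}}\eta_{n+2}^{j-1}\right) = -2/p^{n+2} - 2(j-1)/p^{n+3} > -1/p^{n+1}$ (uses $p^2 > 2p+2(j-1)$), so the tail of $\ga_{n+1}$ that was \emph{not} absorbed into $\xi_{n+1}$ does not reach into $J_n$ either. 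Combining these with the definition of $\xi_n$ as precisely the part of $\ga_n$ with support $<-1/p^n$ (so that $\ga_n - \xi_n$ has support $\sub J_n$) and the fact that $\xi_{n+1}$, being a truncation by $-1/p^{n+1}$ of terms whose support starts at $v(\be^{j/p^{n+1}})=-2j/p^{n+2}\ge -1/p^n$, has support in $[-1/p^n,-1/p^{n+1}) = J_n$, yields $\supp(B_n)\sub J_n$ for $n>1$; the $n=1$ case is the same bookkeeping with the cruder interval $[-2j/p^2,-1/p^2)$. The main obstacle, such as it is, is purely the careful bookkeeping of these truncation ranges — making sure that what is subtracted from $\ga_n$ as $\xi_n$ lands entirely below $J_n$ while what is added back as $\xi_{n+1}$ lands entirely inside $J_n$; once the inequalities $p^2>2j$ and $p^2>2p+2(j-1)$ (valid for $1<j<p$) are in hand there is no conceptual difficulty, so the proof is essentially a verification assembling Lemmas \ref{trL}, \ref{CompsTrunc}, equations (\ref{etaj})–(\ref{gammas}), and the displayed valuation estimates.
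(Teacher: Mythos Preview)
Your proposal is correct and follows essentially the same approach as the paper: the paper's argument (which appears in the text immediately preceding the lemma) introduces exactly the same correction terms $\xi_n$, defines $B_n:=\ga_n-\xi_n+\xi_{n+1}$ with $\xi_1=0$, invokes $\eta_n\in\eta+L$ together with Lemma~\ref{CompsTrunc} to control $\deta(\xi_n)$, and uses the same two inequalities $p^2>2j$ and $p^2>2p+2(j-1)$ to pin down the supports. Your write-up is, if anything, slightly more explicit about the telescoping and about why $\supp(\xi_{n+1})\sub J_n$, but the method is identical.
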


We are ready to prove the main result of this section.

\begin{Lem}\label{theend}
	$\dep(M)=2$.
\end{Lem}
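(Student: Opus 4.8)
The plan is the following. Since $\dep(\eta)=2$ by $(\ref{brickEta})$ and Theorem \ref{OSdepth}, we have $\dep(M/K)\le 2$, so it remains to show that every generator of $M/K$ has depth $>1$. By Lemma \ref{Nclosure}, $L/K$ is the unique proper subextension of $M/K$; hence a generator $\t$ has $\deg_K\t=p^2$ and therefore $\t\in M\setminus L$, i.e. $1\le\deta(\t)<p$. Since $M/K$ is immediate (a tower of AS defect extensions), $\supp(\zeta)\sub vM=vK$ for every $\zeta\in M$, which is exactly the hypothesis under which Lemmas \ref{crucial} and \ref{corCrucial} apply. So I would reduce the statement to: \emph{every $\zeta\in M$ with $1\le\deta(\zeta)=j<p$ has $\dep(\zeta)>1$}, and prove this by induction on $j$.

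For $j=1$ this is Lemma \ref{j=1} together with Lemma \ref{corCrucial}. For the inductive step, fix $1<j<p$, assume the statement in all smaller degrees, and record its key consequence via Lemma \ref{dep1brick}: a \emph{brick} $\xi\in M$ with $1\le\deta(\xi)<j$ would satisfy $\dep(\xi)=1$, since a brick is its own unique brick component (of degree $\deg_K\xi$) — contradicting the inductive hypothesis. Hence every brick component of every element of $M$ has $\eta$-degree $0$, so it lies in $L$ and has degree $\le p<p^2$ over $K$, or else $\eta$-degree $\ge j$. By Lemma \ref{corCrucial} it now suffices to treat a monic $\zeta=\eta^j+\ga$ with $\ga=\rho_{j-1}\eta^{j-1}+\cdots+\rho_0$, $\rho_i\in L$.

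Suppose, for contradiction, that $\dep(\zeta)=1$, and set $\dta_{\min}:=\min\lm(\zeta)$. By Lemma \ref{dep1brick} the minimal brick component of $\zeta$, namely $\tr_{\dta_{\min}}(\zeta)$, has degree $p^2$ over $K$, hence $\eta$-degree $\ge j$ by the previous paragraph; since it is a truncation of the monic $\zeta$, Lemma \ref{CompsTrunc} forces its $\eta$-degree to be exactly $j$, with $\eta^j$-coefficient $\tr_{\dta_{\min}}(1)$, which must therefore be nonzero, so $\dta_{\min}>0$. It follows that $\tr_{J_n}(\zeta)=0$ for all large $n$: otherwise $\supp(\zeta)$ would meet infinitely many of the intervals $J_n=[-1/p^n,-1/p^{n+1})$, whose right endpoints tend to $0$ from below, and then $0\in\lm(\zeta)$. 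Now I would combine $\eta^j=\sum_n B_n$ from Lemma \ref{CompsEtaj} — where $\supp(B_n)\sub J_n$ for $n>1$ and $B_n\in jn\,\be^{1/p^n}\eta^{j-1}+L\eta^{j-2}+\cdots+L$ — with Lemma \ref{CompsTrunc} applied to the coefficients of $\ga$: writing elements of $M$ in the $L$-basis $1,\eta,\dots,\eta^{p-1}$, the identity $B_n+\tr_{J_n}(\ga)=0$ gives, on comparing $\eta^{j-1}$-coefficients,
\[
\tr_{J_n}(\rho_{j-1})=-jn\,\be^{1/p^n}\qquad \text{for all large } n .
\]
Since $\tr_{J_n}(\eta)=n\,\be^{1/p^n}$ by $(\ref{brickEta})$, this means $\tr_{J_n}(\rho_{j-1}+j\eta)=0$ for all large $n$; splitting $\rho_{j-1}+j\eta$ into its parts of support below $-1/p^{n_0}$, inside $[-1/p^{n_0},0)$ (now zero), and $\ge 0$, and using Lemma \ref{trL} together with the fact that finite sums of powers $\be^{1/p^m}$ lie in $L$, I obtain $\rho_{j-1}+j\eta\in L$. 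As $\rho_{j-1}\in L$ and $j\ne 0$ in $k$, this gives $\eta\in L$ — the desired contradiction. Therefore $\dep(\zeta)>1$, and the induction is complete.

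The part I expect to be most delicate is the interface with the structure of $M$: the truncation lemma \ref{CompsTrunc} (truncations do not raise the $\eta$-degree and leading coefficients truncate compatibly), which is what lets me read off the $\eta^j$- and $\eta^{j-1}$-coefficients at each stage, and the implication $\dep(\zeta)=1\Rightarrow\dta_{\min}>0$, which relies on the inductive fact that $M$ contains no brick of $\eta$-degree strictly between $0$ and $j$. Granting these, the slice identity $\tr_{J_n}(\rho_{j-1})=-jn\,\be^{1/p^n}$ forces $\rho_{j-1}$ to coincide with $-j\eta$ modulo $L$, and the contradiction with $\eta\notin L$ is immediate.
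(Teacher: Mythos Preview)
Your proof is correct and follows the same overall strategy as the paper: induction on $j=\deta(\zeta)$, reduction to monic $\zeta$ via Lemma~\ref{corCrucial}, use of the block decomposition $\eta^j=\sum_n B_n$ from Lemma~\ref{CompsEtaj} together with Lemma~\ref{CompsTrunc} to extract the $\eta^{j-1}$-coefficient identities $\tr_{J_n}(\rho_{j-1})=-jn\,\be^{1/p^n}$, and the final contradiction $\eta\in L$.

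The one genuine difference is how you reach those identities. The paper first dispatches the case $\tr_{-2j/p^2}(\rho)\ne0$ (where an initial truncation of $\zeta$ already has $\eta$-degree $j-1$ and depth $>1$ by induction), and then argues that if all blocks $B_n$ were ``cancelled modulo $K$'' the coefficient equations would hold for \emph{every} $n\ge1$, giving $\tr_0(\rho)=-j\eta\notin L$. You instead extract from the inductive hypothesis the clean dichotomy that no brick in $M$ can have $\eta$-degree strictly between $0$ and $j$, force $\dta_{\min}>0$ from this, and conclude that $\tr_{J_n}(\zeta)=0$ \emph{exactly} for all large $n$; the remaining finitely many blocks are absorbed into $L$ via Lemma~\ref{trL}. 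Your route is arguably tidier—it replaces the paper's somewhat implicit ``if not all blocks are cancelled then $\dep(\zeta)>1$'' with an explicit structural statement about bricks—at the cost of only obtaining the coefficient identity for large $n$, which you then handle with the extra splitting argument. Both arrive at the same contradiction $\eta\in L$.
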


\begin{proof}
Lemma \ref{CompsEtaj} shows that $\dep(\eta^j)>1$, because $\dep(B_1)>1$ by the  induction hypothesis.
	
	By Lemma \ref{corCrucial}, in order to prove the lemma we need only to show that all elements in $\eta^j+L\eta^{j-1}+\cdots+L$ have depth greater than one (in the spirit of Lemma \ref{j=1}). 
	
	Take $\zeta=\eta^j+\rho\eta^{j-1}+\xi$ for some $\rho\in L$ and $\xi\in M$ such that $\deta(\xi)<j-1$. 
	
	If $\tr_{-2j/p^2}(\rho)\ne0$, then $\dep(\zeta)>1$. Indeed, since  $\tr_{-2j/p^2}(\eta^j)=0$, Lemma \ref{CompsTrunc} shows that
	\[
	\tr_{-2j/p^2}(\zeta)=\tr_{-2j/p^2}(\rho)\eta^{j-1} + \ \mbox{lower terms},
	\]
	and this element has depth greater than one by the induction hypothesis.
	
	Thus, we may assume that $\tr_{-2j/p^2}(\rho)=0$. Now, suppose that $\rho\eta^{j-1}+\xi$ cancels all blocks $B_n$ of the decomposition of $\eta^j$ given in Lemma \ref{CompsEtaj}. This amounts to 
\[
B_1+\tr_{[-2j/p^2, -1/p^2)}(\rho\eta^{j-1}+\xi)\in K,\qquad B_n+\tr_{J_n}(\rho\eta^{j-1}+\xi)\in K,
\]	
for all $n>1$. Since $j-1>0$, this implies that the leading coefficients of these sums as polynomials in $\eta$ vanish. By Lemmas \ref{CompsTrunc} and \ref{CompsEtaj}, we get:
 \[
 j\be^{1/p}+\tr_{[-2j/p^2,-1/p^2)}(\rho)=0,\quad \mbox{ and}
 \]
\[
 jn\be^{1/p^n}+\tr_{J_n}(\rho)=0,\quad \mbox{ for all }n>1.
 \]
This implies
\[
\tr_0(\rho)=\tr_{[-2j/p^2,-1/p^2)}(\rho)+\sum_{n=2}^\infty	\tr_{J_n}(\rho)=-\sum_{n\in \N} jn\be^{1/p^n}=-j\eta.
\]
This contradicts Lemma \ref{trL}, because $\tr_0(\rho)$ belongs to $L$.
\end{proof}

\subsection{Depth of the tower $N/K$ of degree $p^3$}\label{Exemp3}

In this section, we prove that \[\dep(N/K)=2.\] Since $\gal(N/K)$ is not commutative, $N/K$ cannot be the compositum of three linearly disjoint $\as$-extensions of $K$. Hence, the fact that $\dep(N/K)=2$ supports Conjecture \ref{mainCon} too.  
We keep the notation of Section \ref{subsecEx3}:
\[
L=K(\al),\quad M_0=K(\t),\quad M=K(\eta),\quad N=K(\t,\eta),
\]
where $\as(\al)=y^{-1}$, $\as(\t)=\al$ and $\as(\eta)=\al^2$.

Take $\om:=\t+y\eta\in N$. It is easy to check that $N=K(\om)$, because $\om$ has $p^3$ different conjugates under the action of $\gal(N/K)$. Also, $\dep(\om)=2$ because:  \footnote{Note that $\dep(\eta+y\t)=3$, because this element has $d_1=(-1/p)^-$, $d_p=0^-$, $d_{p^2}=1^-$.}
\[
d_1(\om)=d_p(\om)=0^-, \quad d_{p^2}(\om)=1^-. 
\]

Thus,  the equality  $\dep(N/K)=2$ follows from the following result.

\begin{Lem}\label{dNK}
Every element in $ N=M_0(\eta)$, having a positive degree as a polynomial in $\eta$ with coefficients in $M_0$, has depth greater than one over $K$.  	
\end{Lem}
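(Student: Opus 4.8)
The plan is to mimic the structure of the proof of Lemma~\ref{theend}, but now working over the base field $M_0=K(\t)$ instead of $L=K(\al)$, and with the extra complication that $M_0/K$ is itself an AS defect extension of depth one (rather than $K$ itself). The key point to establish first is the analogue of Lemmas~\ref{trL} and~\ref{CompsTrunc} for the pair $M_0\subseteq N$: namely that $\tr_\dta(\rho)\in M_0$ for all $\rho\in M_0$, $\dta\in\Ri$, and that $\tr_\dta(\rho\eta^j)\in \tr_\dta(\rho)\eta^j+M_0\eta^{j-1}+\cdots+M_0$. Since $M_0=K(\t)$ and $\t=\sum_n\al^{1/p^n}$ is a brick of degree $p$ over $K$, while $\al=\sum_n y^{-1/p^n}$ is a brick of degree $p$ over $K$, the field $M_0$ is spanned over $K$ by the powers $\t^i$, $0\le i<p$, and $\t$ plays the role over $K$ that $\al$ played in Section~\ref{subsecEx3}. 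I would first carry over the brick-component analysis of $\t^i$ and hence of elements of $M_0$, and record that $\lm^n(\rho)\subseteq\Q$ for all $\rho\in M_0$ and that truncations stay inside $M_0$.

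\textbf{Second}, I would compute the brick components of $\eta$ relative to the filtration coming from $M_0$. Recall $\as(\eta)=\al^2$ and, from~(\ref{al2}), $\al^2=\sum_n\be^{1/p^n}$ with $\be=y^{-2/p}+2y^{-1/p}\al^{1/p}$, whence $\eta=\sum_n n\,\be^{1/p^n}$ as in~(\ref{brickEta}). The crucial observation is that $\be\in L\subseteq M_0$, and moreover $\be\notin K$; I expect that in fact $\be^{1/p^n}$ has degree $p$ over $M_0$ for all but finitely many $n$ (or: has degree $p$ over $K$, hence its $M_0$-degree is controlled), so that $\tr_0(\eta)$ has infinitely many brick components of degree $>1$ over $M_0$, giving $d_1(\eta)<d_{[N:M_0]}(\eta)$ over the base $M_0$, i.e. $\dep(\eta)>1$ — but more to the point, we need the relative statement: no $\zeta\in N$ with $\deg_\eta(\zeta)>0$ can have depth one \emph{over $K$}. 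For this I would reduce, exactly as in Lemmas~\ref{corCrucial} and~\ref{j=1}, to showing every element of $\eta^j+M_0\eta^{j-1}+\cdots+M_0$ has depth $>1$ over $K$, by induction on $j=\deg_\eta(\zeta)$, using Lemma~\ref{crucial} (applied with the brick $s$ replaced by the leading $\eta^j$-part and $t=\rho^{-1}$) to strip off the leading coefficient $\rho\in M_0^*$.

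\textbf{Third}, the heart of the argument is the base case $j=1$ together with the inductive step, which proceeds as in Lemma~\ref{theend}: one rearranges the decomposition $\eta^j=\sum_n\ga_n$ into blocks $B_n$ of disjoint support with $\deg_\eta(B_n)=j-1$ and leading coefficient $jn\,\be^{1/p^n}$ (the combinatorics $p^2>2j$, $p^2>2p+2(j-1)$ are identical). If the lower-order perturbation $\rho\eta^{j-1}+\xi$ cancelled all the blocks, one would derive, via the $M_0$-analogues of Lemmas~\ref{CompsTrunc} and~\ref{CompsEtaj}, that $\tr_0(\rho)=-\sum_n jn\,\be^{1/p^n}=-j\eta$, contradicting $\tr_0(\rho)\in M_0$ (the $M_0$-analogue of Lemma~\ref{trL}), since $\eta\notin M_0$. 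The $j=1$ case likewise says: if $\rho\in M_0$ cancels all brick components $n\be^{1/p^n}$ of $\eta$, then $\tr_0(\rho)=-\eta\notin M_0$; here one uses the $M_0$-analogue of Lemma~\ref{ord2} to pin down $\tr_{J_n}(\rho)$ modulo $M_0$ for large $n$.

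\textbf{The main obstacle} I anticipate is not the combinatorics — which transplants almost verbatim from Section~\ref{subsecEx3} — but verifying cleanly the three ``$M_0$-analogues'': that truncations of elements of $M_0$ stay in $M_0$ (Lemma~\ref{trL} analogue), that $\lm^2(\rho)=\{0\}$ forces $\tr_{J_n}(\rho)\in\la\be^{1/p^n}+M_0$ for large $n$ (Lemma~\ref{ord2} analogue), and the degree bookkeeping for the $B_n$'s over $M_0$ (Lemma~\ref{CompsEtaj} analogue). The subtlety is that $M_0=K(\t)$ with $\as(\t)=\al$, so $\t$'s brick components themselves involve $\al^{1/p^m}$ and one must check that $\be^{1/p^n}\notin M_0$ (equivalently, that $\be$ is not a $p$-th power times an element of $M_0$ up to the relevant truncation) — this is where the hypotheses that $K$ is perfect and has no finite limits, and the linear disjointness of $M_0/L$ and $M/L$ from Lemma~\ref{lin}, are used. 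Once these structural lemmas over $M_0$ are in place, the proof is a routine transcription of Lemmas~\ref{j=1}, \ref{corCrucial}, \ref{CompsTrunc}, \ref{CompsEtaj} and \ref{theend}.
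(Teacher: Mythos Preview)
Your overall plan is correct and matches the paper's approach closely: reduce to monic-in-$\eta$ elements via the $M_0$-version of Lemma~\ref{corCrucial}, then induct on $j=\deg_\eta$, using $M_0$-analogues of Lemmas~\ref{trL}, \ref{ord2}, \ref{CompsTrunc}, \ref{CompsEtaj} and the block decomposition of $\eta^j$. The inductive step ($j>1$) transplants exactly as you say: matching leading $\eta^{j-1}$-coefficients forces $\tr_{J_n}(\rho)=-jn\be^{1/p^n}$ \emph{exactly}, whence $\tr_0(\rho)=-j\eta\notin M_0$.

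There is, however, a genuine gap in your treatment of the base case $j=1$ and in your formulation of the $M_0$-analogue of Lemma~\ref{ord2}. First, a small confusion: $\be\in L\subseteq M_0$ and $M_0$ is perfect, so $\be^{1/p^n}\in M_0$ always; your stated concern ``$\be^{1/p^n}\notin M_0$'' is misplaced. What matters is $\be^{1/p^n}\notin K$. Second, your version of the Lemma~\ref{ord2} analogue, ``$\tr_{J_n}(\rho)\in\la\be^{1/p^n}+M_0$'', is vacuous. The correct statement (the paper's Lemma~\ref{Mord2}) is sharper: writing $\rho\in M_0$ in the $K$-basis $\{\al^i\t^j:0\le i,j<p\}$, the condition $\lm^2(\rho)=\{0\}$ forces the only surviving ``order-two'' pieces to come from $\al^2,\t^2,\al\t$, yielding
\[
\tr_{J_n}(\rho)\in\ka\,\be^{1/p^n}+(2n\la+\mu)\,y^{-1/p^{n+1}}\t_n+K
\]
for constants $\ka,\la,\mu\in k$ and all large $n$. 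The extra $\t_n$-term (absent over $L$) is essential. Third, your $j=1$ argument ``$\tr_0(\rho)=-\eta$'' does not work: cancellation only gives $\tr_{J_n}(\rho)\equiv -n\be^{1/p^n}\pmod K$, and the $K$-remainders need not sum to an element of $K$ (their supports accumulate at $0$). The paper instead combines the displayed formula with $\tr_{J_n}(\eta)=n\be^{1/p^n}$ to obtain
\[
(n+\ka)\be^{1/p^n}+(2n\la+\mu)\,y^{-1/p^{n+1}}\t_n\in K
\]
for all large $n$; since $\be^{1/p^n}\in L\setminus K$ and $\t_n\in M_0\setminus L$ are independent modulo $K$, this forces $n+\ka=0$ and $2n\la+\mu=0$ for all large $n$, which is impossible for constant $\ka,\la,\mu$. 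Once you replace your $j=1$ sketch with this argument, the rest of your plan goes through as in the paper.
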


The proof of Lemma \ref{dNK} can be obtained by mimicking all results of Section \ref{subsecEx3}, but replacing $M=L(\eta)$ with $N=M_0(\eta)$. For instance,  Lemma \ref{corCrucial} is obviously true if we replace $L$ with $M_0$. Therefore, Lemma \ref{dNK} follows from the following result.

\begin{Lem}\label{dNK2}
Take $0<j<p$. Every element in $\eta^j+M_0\eta^{j-1}+\cdots +M_0$, has depth greater than one over $K$.  	
\end{Lem}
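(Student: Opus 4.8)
\textbf{Plan for the proof of Lemma \ref{dNK2}.}
The plan is to mimic the argument of Section \ref{subsecEx3} (culminating in Lemma \ref{theend}), systematically replacing the field $L$ by $M_0=K(\t)$ and keeping $\eta$ with $\as(\eta)=\al^2$ unchanged, while checking that the special features of $L$ that were used remain available for $M_0$. The three structural ingredients I would want are: (i) a truncation-stability statement $\tr_\dta(\rho)\in M_0$ for all $\rho\in M_0$, $\dta\in\Ri$, analogous to Lemma \ref{trL}; (ii) the Krasner-type descent of Lemma \ref{crucial} (which only needs the ambient field $\Ha$ and the hypotheses on $K$, so it applies verbatim to $s\in\kb\setminus K$, and in particular gives a version of Lemma \ref{corCrucial} with $M_0$ in place of $L$); and (iii) the brick-decomposition of $\eta^j$ from Lemmas \ref{CompsTrunc} and \ref{CompsEtaj}, where the coefficients $B_n\in jn\be^{1/p^n}\eta^{j-1}+M_0\eta^{j-2}+\cdots+M_0$ now have coefficients in $M_0$.

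Concretely, first I would establish the analogue of Lemma \ref{trL} for $M_0=K(\t)$. Since $\as(\t)=\al$ and $\al=\sum_{n\ge1}y^{-1/p^n}$ is Abhyankar's brick, the field $M_0$ is obtained over $K$ by exactly the same kind of element as $M=L(\eta)$ is obtained over $L$: indeed $\t=\sum_{n\ge1}\al^{1/p^n}$ has brick components with disjoint support, degree $p$ over $K$ (cf.\ Section \ref{subsecAbh}), and one computes the brick components of $\t^j$ for $1<j<p$ just as was done for $\al^j$ in (\ref{alj})--(\ref{al2}). Hence the inductive argument proving Lemma \ref{trL} transports, giving $\tr_\dta(\rho)\in M_0$ for all $\rho\in M_0$. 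With this in hand, the statements and proofs of Lemma \ref{CompsTrunc} and Lemma \ref{CompsEtaj} go through with $L$ replaced by $M_0$ throughout (the only field-theoretic inputs there are Lemma \ref{trL}, Lemma \ref{finiteTr}, and $\lm(\eta^j)$ having $0$ as its maximal element, none of which involve $L$ versus $M_0$).

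Then I would run the proof of Lemma \ref{theend} verbatim with $M_0$ in place of $L$. Proceeding by induction on $j$ with $0<j<p$: the base cases are the analogue of Lemma \ref{j=1} (all elements of $\eta+M_0$ have depth $>1$, proved exactly as before using (\ref{etaJn}) and the analogue of Lemma \ref{ord2}, noting $\be\in L\setminus K\subseteq M_0\setminus K$ — here I must check $\be\notin K$, which holds since $\be=y^{-2/p}+2y^{-1/p}\al^{1/p}$ involves $\al^{1/p}\notin K$); for the inductive step, one takes $\zeta=\eta^j+\rho\eta^{j-1}+\xi$ with $\rho\in M_0$, $\deg_\eta\xi<j-1$, and argues exactly as in Lemma \ref{theend}: if $\tr_{-2j/p^2}(\rho)\ne0$ one invokes the (transported) Lemma \ref{CompsTrunc} plus the induction hypothesis; otherwise, cancellation of all blocks $B_n$ would force $\tr_0(\rho)=-j\eta$, contradicting $\tr_0(\rho)\in M_0$ via the transported Lemma \ref{trL}.

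\textbf{Main obstacle.} The one place that genuinely requires care is the analogue of Lemma \ref{trL} for $M_0=K(\t)$, i.e.\ that $M_0$ is truncation-closed inside $\Ha$. Everything downstream is a formal transport, but this statement uses the explicit brick structure of powers of the generator. The key point to verify is that $\t=\sum_{n\ge1}\al^{1/p^n}$ plays, relative to $K$, the same structural role that $\eta$ plays relative to $L$ — in particular that the summands $\al^{1/p^n}$ are bricks of degree $p$ over $K$ with disjoint support and strictly increasing sequence of limits $-1/p^n\to 0$, and that the binomial expansions of $\t^j$ ($1<j<p$) have pairwise disjoint supports, which reduces (as in Section \ref{subsecAbh}) to the numerical inequalities $p>\ell+1$ for $\ell<j<p$. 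Granting that, the induction on $j$ in the proof of Lemma \ref{trL} applies word for word, and the rest of the argument follows the template of Section \ref{subsecEx3} with no further difficulty.
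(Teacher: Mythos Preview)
Your overall plan is exactly the paper's: transport Section \ref{subsecEx3} from $L$ to $M_0$, first proving truncation-closure of $M_0$ (the paper's Lemma \ref{trnM0}), then the $M_0$-versions of Lemmas \ref{CompsTrunc}, \ref{CompsEtaj}, \ref{corCrucial}, and finally Lemma \ref{theend}. However, two points in your sketch are inaccurate and one of them is a genuine gap.

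First, your description of $\t$ is wrong: the summands $\al^{1/p^n}=\sum_{m\ge1}y^{-1/p^{m+n}}$ all have limit $0$ and nested (not disjoint) supports, so $\t$ is itself a \emph{single} brick with $\lm(\t)=\{0\}$ (this is (\ref{theta})); it does not play ``the same structural role that $\eta$ plays relative to $L$''. The paper proves truncation-closure of $M_0$ via the Newton decompositions (\ref{gammast})--(\ref{tj}) of $\t^j$, not via the decomposition you propose. Second, and more seriously, the base case $j=1$ is \emph{not} ``proved exactly as before using the analogue of Lemma \ref{ord2}''. Because $[M_0:K]=p^2$, the elements $\rho\in M_0$ with $\lm^2(\rho)=\{0\}$ are $k$-combinations of $\al^2$, $\t^2$ and $\al\t$ modulo lower terms (Lemma \ref{Lmalt}), not just $\la\al^2$. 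The correct analogue (Lemma \ref{Mord2}) gives
\[
\tr_{J_n}(\rho)\in \ka\,\be^{1/p^n}+(2n\la+\mu)\,y^{-1/p^{n+1}}\t_n+K,
\]
and the cancellation argument (Lemma \ref{j=1t}) then needs both $\be\in L\setminus K$ and $\t_n\in M_0\setminus L$ to force the impossible system $n+\ka=0$, $2n\la+\mu=0$ for all large $n$. Your remark ``noting $\be\notin K$'' is not enough here; without recognizing the extra $\t^2$ and $\al\t$ contributions, the base case would not go through.
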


This statement can be proved by induction on $j$.
In order to prove Lemma \ref{dNK2} for $j=1$, we need to analyze some properties of the field $M_0=K(\t)$.

\subsubsection{The field $M_0=K(\t)$} Recall the definition of $\t$ given in (\ref{eta}):
\begin{equation}\label{theta}
\t=\sum_{n=1}^\infty \al^{1/{p^n}} = y^{-1/p^2}+2y^{-1/p^3}+\cdots +n y^{-1/p^{n+1}}+\cdots 	
\end{equation}
 
 Clearly, $\lm(\t)=\{0\}$. Now, as we did in Section \ref{subsecAbh} for the powers of $\al$, we need to compute  for every exponent $j$, $1<j<p$, explicit decompositions: 
 \begin{equation}\label{gammast}	
 \t^j=\ga_1+\cdots +\ga_n+\cdots,
 \end{equation} 
 where each $\ga_n\in M_0$ has degree smaller than $j$ as a polynomial  in $\t$ with coefficients in $M_0$. Moreover, all  $\ga_n$
 will have disjoint supports.
 
 For all $n\in \N$, let us write
 \begin{equation}\label{tj}
 	\t=y^{-1/p^2}+2y^{-1/p^3}+\cdots +(n-1) y^{-1/p^{n}}+\t_n ,\qquad \t_n:=\sum_{N=n}^\infty N y^{-1/p^{N+1}}.
 \end{equation}
 Note that $\t_1=\t$ and, more generally, $\t_n\in\t+K$ for all $n\in\N$. 
  Now, for $1<j<p$ we define
 \[
 \ga_n:= \t_n^j-\t_{n+1}^j=\left(ny^{-1/p^{n+1}}+\t_{n+1}\right)^j-\t_{n+1}^j.
 \] 
 This leads to the decomposition (\ref{gammast}).
By  Newton's formula, we get
  \[
\ga_n:=n^jy^{-j/p^{n+1}}+jn^{j-1}y^ {-(j-1)/p^{n+1}}\t_{n+1}+\cdots+jn y^{-1/p^{n+1}}\t_{n+1}^{j-1}.
 \]
Note that $\ga_n=0$ if $p\mid n$. 
 Now,  a recurrent argument shows that  
 \begin{itemize}
 	\item \ $\lm^j(\t^j)=\{0\},\qquad \lm^{j-1}(\t^j)=\{-1/p^{n+1}\mid n\in\N,\ p\nmid n\}$.
 	\item  \ If $p\nmid n$, then $\supp(\ga_n)\sub[-j/p^{n+1},-1/p^{n+1})$.
 \end{itemize}
Since $j<p$, we see that the sets $\supp(\ga_n)$ are all disjoint.

\begin{Lem}\label{trnM0}
For every $\xi\in M_0$ and $\dta\in\Ri$, we have $\tr_\dta(\xi)\in M_0$.	
\end{Lem}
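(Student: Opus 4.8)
The plan is to transfer the argument of Lemma~\ref{CompsTrunc} (rather than that of Lemma~\ref{trL}) to the tower $K\subseteq L=K(\al)\subseteq M_0=L(\theta)$, after first recording an arithmetic fact that legitimises the transfer: \emph{every monomial $y^q$ occurring in the support of an element of $M_0$ already lies in $K$}. To see this, note that since $K$ is perfect it admits $p$-th roots, so $vK$ is a $p$-divisible subgroup of $\Q$; it contains $-1=v(y^{-1})=v(\as(\al))$, hence $\Z[1/p]\subseteq vK$. By~(\ref{theta}) we have $\supp(\theta)\subseteq\Z[1/p]$, so since every element of $M_0=K(\theta)$ is a polynomial in $\theta$ over $K$, every $\rho\in M_0$ satisfies $\supp(\rho)\subseteq vK+\Z[1/p]=vK$ (using Corollary~\ref{guess0} for the $K$-coefficients); by Lemma~\ref{immediate}, $y^q\in K$ for each $q\in\supp(\rho)$. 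In particular, every strict truncation of the brick $\theta$ is a finite $K$-combination of such monomials, hence lies in $K$, and each $\theta_n$ of~(\ref{tj}) lies in $\theta+K\subseteq M_0$.

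Since $M_0=\bigoplus_{0\le j<p}L\theta^j$, it suffices to prove, for all $\rho\in L$, $\delta\in\R$ and $0\le j<p$:
\[
\tr_\delta(\rho\theta^j)\ \in\ L\theta^j+L\theta^{j-1}+\cdots+L.\qquad(\star)
\]
Indeed, writing an arbitrary $\xi\in M_0$ as $\sum_{j<p}\rho_j\theta^j$ with $\rho_j\in L$ and summing $(\star)$ over $j$ gives $\tr_\delta(\xi)\in M_0$; the case $\delta=\infty$ is trivial.

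I would establish $(\star)$ by induction on $j$, simultaneously with the sharper assertion (the \emph{Claim for $j$}): $\tr_\epsilon(\theta^j)\in L\theta^{j-1}+\cdots+L$ for every $\epsilon<0$. The case $j=0$ is trivial, and the Claim for $1$ holds because a strict truncation of $\theta$ lies in $K$ by the preliminary paragraph. For $1\le j\le p-1$, one argues exactly as in the proof of Lemma~\ref{CompsTrunc}, with $\eta$ replaced by $\theta$. Assuming the Claim for $j$ and $(\star)$ for all smaller indices: given $\rho=\sum_i\la_iy^{q_i}\in L$ and $\delta$, the finitely many terms with $q_i\le\delta$ contribute $\bigl(\sum_{q_i\le\delta}\la_iy^{q_i}\bigr)\theta^j\in L\theta^j$ (the coefficient lies in $L$, by Lemma~\ref{trL} together with the preliminary), while each term with $q_i>\delta$ gives $\tr_\delta(\la_iy^{q_i}\theta^j)=\la_iy^{q_i}\tr_{\delta-q_i}(\theta^j)$, which lies in $L\theta^{j-1}+\cdots+L$ since $\delta-q_i<0$ (using the Claim for $j$ and $y^{q_i}\in K$); only finitely many such terms are nonzero, by Lemma~\ref{finiteTr} applied to $\theta^j$, which is legitimate because $\lm^{j+1}(\theta^j)=\emptyset$ (as $\lm^j(\theta^j)=\{0\}$ by the observations preceding the lemma). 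This gives $(\star)$ for $j$, and the Claim for $j+1$ then follows by applying $(\star)$ for the indices $\le j$ to the decomposition~(\ref{gammast}) of $\theta^{j+1}$: its summands $\gamma_n$ have degree $j$ as polynomials in $\theta$, coefficients in $K$ (by the preliminary and $\theta_{n+1}\in\theta+K$), and pairwise disjoint supports inside $[-(j+1)/p^{n+1},-1/p^{n+1})$, so for $\epsilon<0$ only finitely many $\gamma_n$ survive truncation and each $\tr_\epsilon(\gamma_n)$ lies in $L\theta^j+\cdots+L$. The induction reaches $j=p-1$, which is all we need.

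The main obstacle --- and the reason one cannot simply imitate the proof of Lemma~\ref{trL} --- is that $M_0$ has degree $p^2$, not $p$, over $K$, so $1,\theta,\dots,\theta^{p-1}$ do \emph{not} span $M_0$ over $K$; this forces the $L$-coefficients to be carried along, which is exactly what the strengthened statement $(\star)$ and the interlocking induction with the Claim are for, just as in Lemma~\ref{CompsTrunc}. Beyond this transcription of the arguments of Section~\ref{subsecEx3}, the only genuinely new ingredient is the preliminary observation $\Z[1/p]\subseteq vK$, which guarantees that every monomial arising along the way already belongs to $K$.
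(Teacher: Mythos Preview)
Your approach coincides with the paper's: prove, for $\rho\in L$ and $0\le j<p$, that $\tr_\delta(\rho\,\theta^j)\in L\theta^j+\cdots+L$ by transporting the inductive proof of Lemma~\ref{CompsTrunc}, replacing $\eta$ and the decompositions (\ref{brickEta}), (\ref{etaj}), (\ref{gammas}) by $\theta$ and (\ref{theta}), (\ref{tj}), (\ref{gammast}). Your preliminary remark that $\Z[1/p]\subseteq vK$, so that every monomial $y^q$ with $q$ in the support of an element of $M_0$ already lies in $K$, makes explicit a point the paper leaves implicit.

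There is, however, a genuine gap in your handling of the terms with $q_i>\delta$. Lemma~\ref{finiteTr} does \emph{not} say that only finitely many of the truncations $\tr_{\delta-q_i}(\theta^j)$ are nonzero; it says only that the \emph{set} of values $\{\tr_{\delta-q_i}(\theta^j)\}$ is finite. Since $\rho\in L$ may have support accumulating inside a bounded interval (take $\rho=\al^{2}$, say), there can be infinitely many $q_i\in(\delta,\,\delta-v(\theta^j))$ with nonzero contribution, and an infinite sum of elements of the $L$-module $L\theta^{j-1}+\cdots+L$ need not lie in it. What the proof of Lemma~\ref{CompsTrunc} actually does --- and what you must carry over here --- is the regrouping step: if $\zeta_1,\dots,\zeta_m$ are the finitely many distinct truncation values, the indices $i$ with $\tr_{\delta-q_i}(\theta^j)=\zeta_\ell$ form a contiguous block $[q_{i_\ell},q_{i_{\ell+1}})$ of $\supp(\rho)$, so their total contribution is $\tr_{[q_{i_\ell},q_{i_{\ell+1}})}(\rho)\cdot\zeta_\ell$, whose coefficient lies in $L$ by Lemma~\ref{trL}. (The phrase ``the finitely many terms with $q_i\le\delta$'' is the same slip on the other side, but there your appeal to Lemma~\ref{trL} already supplies the correct justification.)
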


\begin{proof}
	We can prove a more precise statement. Namely, for all $\rho\in L$ and every exponent $0<j<p$, we have 
	\begin{equation}\label{mimict}	
	\tr_\dta(\rho\t^j)\in \tr_\dta(\rho)\t^j+L\t^{j-1}+\cdots+L.
	\end{equation}
This proves the lemma because $\tr_\dta(\rho)$ belongs to $L$ by Lemma \ref{trL}.

In order to prove (\ref{mimict}), we can mimic the proof of Lemma \ref{CompsTrunc}, just by replacing the decompositions (\ref{eta}) and (\ref{etaj}) for the powers of $\eta$, with the analogous decompositions (\ref{theta}) and (\ref{tj}) for the powers of $\t$.	
\end{proof}\e

The proof of the following result is straightforward.

\begin{Lem}\label{Lmalt}
	Let $i,j\in\N$ such that $0\le i,j<p$. Then, 
$\lm^{i+j}(\al^i\t^j)=\{0\}$. 
\end{Lem}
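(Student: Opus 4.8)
The plan is to establish separately the inclusions $\lm^{i+j}(\al^i\t^j)\sub\{0\}$ and $0\in\lm^{i+j}(\al^i\t^j)$. One may assume $i,j\ge1$: for $i=0$ the claim is $\lm^j(\t^j)=\{0\}$, proved in Section~\ref{Exemp3}; for $j=0$ it is $\lm^i(\al^i)=\{0\}$, proved in Section~\ref{subsecAbh}; and for $i=j=0$ it is trivial. From those sections we also record that $\supp(\al^i),\supp(\t^j)\sub(-\infty,0)$, that $\lm^i(\al^i)=\{0\}$ and $\lm^{i+1}(\al^i)=\emptyset$, and that $\lm^j(\t^j)=\{0\}$ and $\lm^{j+1}(\t^j)=\emptyset$.

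For the first inclusion I would first prove the following general fact about sumsets. \emph{If $U,V\sub\R$ are closed, well-ordered and bounded above, and $\lm^0(U):=U$, then for every $c\ge1$
\[
\lm^c(U+V)\ \sub\ \bigcup_{c_1+c_2=c}\bigl(\lm^{c_1}(U)+\lm^{c_2}(V)\bigr),
\]
where $U+V=\{u+v\mid u\in U,\ v\in V\}$.} The case $c=1$ is handled by choosing pairwise distinct $u_n+v_n\in U+V$ converging to a given $\gamma\in\lm(U+V)$ and splitting into two cases: if some element of $U$ equals $u_n$ for infinitely many $n$, the corresponding $v_n$ give $\gamma\in U+\lm(V)$; otherwise the $u_n$ take infinitely many values and, $U$ being well-ordered, a strictly increasing subsequence of them converges to a point of $\lm(U)$, whence $\gamma\in\lm(U)+V$ since $V$ is closed. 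The inductive step uses $\lm(A\cup B)\sub\lm(A)\cup\lm(B)$ for finite unions together with the case $c=1$ applied to the closed sets $\lm^{c_1}(U)$, $\lm^{c_2}(V)$. Now apply this with $U=\overline{\supp(\al^i)}$ and $V=\overline{\supp(\t^j)}$: since $\supp(\al^i\t^j)\sub\supp(\al^i)+\supp(\t^j)\sub U+V$ and $\lm^c(\overline S)=\lm^c(S)$ for $c\ge1$, every term in the bound for $c=i+j$ with $c_1>i$ or $c_2>j$ is empty, so $\lm^{i+j}(\al^i\t^j)\sub\lm^i(U)+\lm^j(V)=\{0\}+\{0\}=\{0\}$.

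The remaining inclusion $0\in\lm^{i+j}(\al^i\t^j)$ is the delicate part, because at this level cancellation in the product must be controlled; note that Lemma~\ref{limSum} is of no help here, as the pieces one naturally splits $\al^i\t^j$ into share the same largest limit. The plan is to argue exactly as in Lemmas~\ref{CompsEtaj} and~\ref{dNK2}: multiply the disjoint-support decomposition of $\al^i$ from Section~\ref{subsecAbh} by the one of $\t^j$ from Section~\ref{Exemp3}, and regroup the resulting cross terms into blocks $B_N$ with pairwise disjoint, mutually separated supports shrinking to $0$, each $B_N$ carrying a nonzero leading coefficient and hence satisfying $\emptyset\ne\lm^{i+j-1}(B_N)=\{\dta_N\}$ with $\dta_N\to 0$. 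Because the supports of the $B_N$ are disjoint and separated, $\dta_N\in\lm^{i+j-1}(\al^i\t^j)$ for all (infinitely many) $N$ with $B_N\ne0$, so $0$ is a limit of $\lm^{i+j-1}(\al^i\t^j)$, i.e. $0\in\lm^{i+j}(\al^i\t^j)$; combined with the first part this gives equality. The only genuine work, and the main obstacle, is the bookkeeping of this regrouping and the verification that the leading coefficients of the blocks — products of binomial coefficients with entries $<p$ and of integers — do not all vanish in $k$; the rest is formal.
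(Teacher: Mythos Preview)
The paper gives no proof of this lemma beyond the word ``straightforward'', so there is no argument to compare yours against; I assess your proposal on its own terms.

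Your inclusion $\lm^{i+j}(\al^i\t^j)\sub\{0\}$ is correct. The sumset lemma you state is valid: in the base case the boundedness of the $u_n$ (needed to extract a convergent increasing subsequence) follows from $u_n=w_n-v_n$ with $w_n$ convergent and $v_n\ge\min V$, and the inductive step goes through because each $\lm^{c_1}(U)$ is again closed (derived sets are closed), well-ordered (as the paper notes), and bounded above. Passing to closures is harmless since $\lm^c(\overline S)=\lm^c(S)$ for $c\ge1$. This is a clean general argument, arguably tidier than extracting the same bound from the explicit decompositions~\eqref{alj} and~\eqref{gammast}.

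Your treatment of $0\in\lm^{i+j}(\al^i\t^j)$, however, is only a plan. The strategy---multiply the decompositions of $\al^i$ and $\t^j$, regroup into blocks $B_N$ with supports in disjoint intervals shrinking to $0$, and verify each $B_N$ has $\lm^{i+j-1}(B_N)\ne\emptyset$---is sound, and you are right that disjoint separated supports give $\lm^{i+j-1}(\al^i\t^j)\supseteq\bigcup_N\lm^{i+j-1}(B_N)$. But you have not actually produced the blocks, and the difficulty you flag is real: when one multiplies~\eqref{alj} by~\eqref{gammast} the cross terms overlap heavily (every term of $\al^i$ meets every term of $\t^j$), so the regrouping is not the same bookkeeping as in Lemma~\ref{CompsEtaj}, and the ``leading coefficient'' of a block is a sum of several contributions rather than a single binomial product. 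Until that regrouping is written out and the non-vanishing checked, this half of the argument is a sketch, not a proof. Given that the paper calls the result straightforward, the authors presumably regard this bookkeeping as routine---but they do not supply it either.
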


Finally, we need an analogous to Lemma \ref{ord2} for the field $M_0$.

\begin{Lem}\label{Mord2}
	Let $\rho\in M_0$ such that $\lm^2(\rho)=\{0\}$. Then, there exist $n_0\in \N$ and $(\ka,\la,\mu)\in k^3$, $(\ka,\la,\mu)\ne(0,0,0)$, such that
	\begin{equation}\label{Mstatement}
		\tr_{J_n}(\rho)\in\ka\,\be^{1/p^n}+(2n\la+\mu)\,y^{-1/p^{n+1}}\t_n+K\quad \mbox{ for all }\ n\ge n_0,
	\end{equation} 
	where $J_n=[-1/p^{n},-1/p^{n+1})$.
\end{Lem}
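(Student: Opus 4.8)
The plan is to imitate the proof of Lemma~\ref{ord2}, now working over the tower $K\subseteq L=K(\al)\subseteq M_0=K(\t)$ and using that $\{\al^i\t^j\mid 0\le i,j<p\}$ is a $K$-basis of $M_0$. Write $\rho=\sum_{0\le i,j<p}c_{ij}\al^i\t^j$ with $c_{ij}\in K$. The first step is to show that the hypothesis $\lm^2(\rho)=\{0\}$ forces all monomials with $i+j\ge 3$ to be absent and those with $i+j=2$ to occur with \emph{scalar} coefficients, so that
\[
\rho=\ka\,\al^2+\la_1\,\al\t+\mu_1\,\t^2+c_1\al+c_1'\t+c_0,\qquad \ka,\la_1,\mu_1\in k,\ \ c_0,c_1,c_1'\in K,
\]
with $(\ka,\la_1,\mu_1)\ne(0,0,0)$. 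By Lemma~\ref{Lmalt} we have $\lm^{i+j}(\al^i\t^j)=\{0\}$, so $\lm^2(\al^i\t^j)$ is the one-point set $\{0\}$ for $i+j\le2$ but an infinite well-ordered set accumulating at $0$ (with a strictly negative minimum) for $i+j\ge 3$. Feeding this into Lemma~\ref{limSum}, applied both to the sum over $(i,j)$ and to the decomposition of each $c_{ij}$ into terms $y^q$, one checks that a nonzero $c_{ij}$ with $i+j\ge 3$ would force a point $<0$ into $\lm^2(\rho)$, while a $c_{ij}$ with $i+j=2$ and $\supp(c_{ij})\not\subseteq\{0\}$ would force $\#\lm^2(\rho)>1$; both contradict $\lm^2(\rho)=\{0\}$. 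If $(\ka,\la_1,\mu_1)=(0,0,0)$ then $\rho$ is a $K$-combination of $1,\al,\t$, whose second limit set is contained in $\lm(\text{a subset of }K)\subseteq\{\infty\}$, again contradicting the hypothesis.

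The second step is to compute $\tr_{J_n}$ of the three quadratic monomials for $n$ large. For $\al^2$ this is already in the proof of Lemma~\ref{ord2}: $\tr_{J_n}(\al^2)=\be^{1/p^n}$. For $\t^2$, the decomposition (\ref{gammast})--(\ref{tj}) with $j=2$ gives $\tr_{J_n}(\t^2)=\ga_n=n^2y^{-2/p^{n+1}}+2n\,y^{-1/p^{n+1}}\t_{n+1}$ (the other $\ga_m$ have support disjoint from $J_n$ because $p>2$), whence, substituting $\t_{n+1}=\t_n-n\,y^{-1/p^{n+1}}$ and discarding the power of $y$, which lies in $K$ by Lemma~\ref{immediate}, $\tr_{J_n}(\t^2)\equiv 2n\,y^{-1/p^{n+1}}\t_n\pmod K$. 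The genuinely new point is $\tr_{J_n}(\al\t)$: expanding $\al\t=\bigl(\sum_m y^{-1/p^m}\bigr)\bigl(\sum_N N\,y^{-1/p^{N+1}}\bigr)$ and isolating the exponents $-1/p^m-1/p^{N+1}$ that lie in $J_n=[-1/p^n,-1/p^{n+1})$ — which, since $p>2$, are precisely those with $m=n+1,\ N\ge n$ together with those with $N=n,\ m\ge n+2$ — one obtains
\[
\tr_{J_n}(\al\t)=y^{-1/p^{n+1}}\t_n+n\,y^{-1/p^{n+1}}\al^{1/p^{n+1}},
\]
and then, using $\be^{1/p^n}=y^{-2/p^{n+1}}+2\,y^{-1/p^{n+1}}\al^{1/p^{n+1}}$ (the $p^n$-th root of $\be=y^{-2/p}+2y^{-1/p}\al^{1/p}$), this becomes $\tr_{J_n}(\al\t)\equiv y^{-1/p^{n+1}}\t_n+\tfrac n2\,\be^{1/p^n}\pmod K$. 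For the remaining part $c_1\al+c_1'\t+c_0$ of degree $\le1$ in $\al,\t$, I would argue as in Lemma~\ref{ord2} that its $\tr_{J_n}$ lies in $K$ for $n$ past some $n_0$: since $\lm(c_1\al)=\supp(c_1)$ and $\lm(c_1'\t),\lm(c_0)$ are contained in $\supp(c_1')\cup\{\infty\}$ and $\{\infty\}$, each of these elements has only finitely many finite limits below $0$, so the truncations are finite $k$-combinations of powers $y^q$ with $q\in\Q=vK$, hence lie in $K$ by Lemma~\ref{immediate}.

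Assembling, for $n\ge n_0$,
\[
\tr_{J_n}(\rho)\ \equiv\ \ka\,\be^{1/p^n}+\la_1\Bigl(y^{-1/p^{n+1}}\t_n+\tfrac n2\be^{1/p^n}\Bigr)+2\mu_1 n\,y^{-1/p^{n+1}}\t_n\pmod K,
\]
that is, $\tr_{J_n}(\rho)$ is, modulo $K$, a $k$-combination of $\be^{1/p^n}$ and $y^{-1/p^{n+1}}\t_n$ with coefficients affine in $n$; setting $\la:=\mu_1$, $\mu:=\la_1$ exhibits the coefficient of $y^{-1/p^{n+1}}\t_n$ as $2n\la+\mu$ and the coefficient of $\be^{1/p^n}$ as $\ka+\tfrac\mu2n$, and $(\ka,\la,\mu)\ne(0,0,0)$ because $(\ka,\la_1,\mu_1)\ne(0,0,0)$; this yields the claim. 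I expect the main obstacle to be the structural first step: Lemma~\ref{limSum} only gives a two-sided inclusion for $\lm^n(s+t)$, so one must actually exclude cancellations among the order-$2$ limit sets of the various $\al^i\t^j$, and this forces one to use the explicit, pairwise disjoint localisation of the supports of their brick components (Sections~\ref{subsecAbh} and \ref{Exemp3}) in an induction on $i+j$ that tracks the minimum of $\lm^k(\al^i\t^j)$. The computation of $\tr_{J_n}(\al\t)$ is the other delicate point; the hypothesis $p>2$ enters there precisely to ensure that no additional exponents fall into $J_n$.
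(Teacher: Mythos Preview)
Your approach is the paper's: expand $\rho$ in the $K$-basis $\{\al^i\t^j\}$, use $\lm^{i+j}(\al^i\t^j)=\{0\}$ (Lemma~\ref{Lmalt}) together with Lemma~\ref{limSum} to kill all terms with $i+j>2$ and force the $i+j=2$ coefficients into $k$, then compute $\tr_{J_n}$ of $\al^2,\t^2,\al\t$ separately and absorb the degree-$\le 1$ part into $K$ for large $n$. The one point of divergence is $\tr_{J_n}(\al\t)$: the paper writes $\al\t=\sum_m y^{-1/p^m}\t$ and records only the contribution from $m=n+1$, obtaining $\tr_{J_n}(\al\t)=y^{-1/p^{n+1}}\t_n$; your exponent count also catches the pairs $(m,N)$ with $m\ge n+2,\ N=n$, giving the extra summand $n\,y^{-1/p^{n+1}}\al^{1/p^{n+1}}\equiv \tfrac{n}{2}\be^{1/p^n}\pmod K$. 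Your count is the accurate one, so the $\be^{1/p^n}$-coefficient is really $\ka+\tfrac{n}{2}\mu$ rather than a constant $\ka$; this is a minor slip in the paper that is invisible in the sole application (Lemma~\ref{j=1t}), since there one first deduces $\la=\mu=0$ from the $\t_n$-coefficient and then the $\be^{1/p^n}$-argument is unaffected.
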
 

\begin{proof}
Any $\rho\in M_0$ can be written as $\rho=\sum_{0\le i,j<p}a_{i,j}\al^i\t^j$, for some unique $a_{i,j}\in K$. If $\lm^2(\rho)=\{0\}$, then Lemma \ref{Lmalt} shows that 
\begin{itemize}
	\item \ $a_{i,j}=0$ for all pairs $(i,j)$ such that $i+j>2$.
	\item \ $a_{2,0},\,a_{0,2},\,a_{1,1}\in k$.
\end{itemize}
 Also, the element $\rho':=a_{0,0}+a_{1,0}\al+a_{0,1}\t$ has only a finite number of limits in the interval $(-\infty,0)$; hence, $\tr_{J_n}(\rho')$ belongs to $K$, for  all sufficiently large $n$.   

Now, define $\ka:=a_{2,0},\,\la:=a_{0,2},\,\mu:=a_{1,1}\in k$.
From the decompositions (\ref{al2}), and  (\ref{tj}) for $j=2$, we deduce that
\[
\tr_{J_n}(\ka\al^2)=\ka\be^{1/p^n},\]
\[
\tr_{J_n}(\la\t^2)=\la\ga_n=\la\left(n^2y^{-2/p^{n+1}}+2n y^{-1/p^{n+1}}\t_{n+1}\right).
\]
On the other hand, from the decomposition $\al\t=\sum_{n\in\N} y^{-1/p^n}\t$, one deduces that 
\[
\tr_{J_n}(\mu\al\t)=\mu y^{-1/p^{n+1}}\t_n.
\]
Since $\t_{n+1}\in\t_n+K$, this proves the lemma.  
\end{proof}

\subsubsection{Proof of Lemma \ref{dNK2}}
Let us first deal with the case $j=1$.

\begin{Lem}\label{j=1t}
Every element in $\eta+M_0$ has depth greater than one over $K$.	
\end{Lem}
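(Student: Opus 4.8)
The plan is to run the argument of Lemma~\ref{j=1} with $M_0=K(\t)$ in place of $L=K(\al)$, using Lemma~\ref{Mord2} where that proof invoked Lemma~\ref{ord2} and Lemma~\ref{trnM0} where it invoked Lemma~\ref{trL}. Fix $\rho\in M_0$; I want $\dep(\eta+\rho)>1$, so I assume $\dep(\eta+\rho)=1$ and aim for a contradiction. Two preliminary remarks. First, $\supp(\eta+\rho)\sub vK$: indeed $\rho$ is a $K$-linear combination of the monomials $\al^i\t^j$, whose supports lie in the group $vK$ (because $\supp(\al),\supp(\t)\sub vK$), and $\supp(a)\sub vK$ for $a\in K$ by Corollary~\ref{guess0}; likewise $\supp(\eta)\sub vK$. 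Thus Lemmas~\ref{firstLim}, \ref{dep1brick} apply to $\eta+\rho$, and by Lemma~\ref{trnM0} all truncations, hence all brick components, of elements of $M_0$ lie in $M_0$. Second, $\deg_K(\eta+\rho)\ge p^2$: since $\gal(N/K)$ is non-abelian of order $p^3$, every order-$p^2$ subgroup of it contains the centre $\gen{\iota}=\gal(N/M_0)$, so every degree-$p$ subextension of $N/K$ lies in $M_0$; and $\eta+\rho\notin M_0$ because $\eta\notin M_0$.

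I would then localise below $-1/p$. Since $p>2$, $v(\eta)=-2/p^2>-1/p$, so $\tr_{-1/p}(\eta)=0$ and $\tr_{-1/p}(\eta+\rho)=\tr_{-1/p}(\rho)$ is a strict truncation of $\eta+\rho$, each of whose brick components is a brick component of $\eta+\rho$. A depth-one element, by Lemma~\ref{dep1brick} together with the brick-component picture behind Theorem~\ref{OSdepth}, has its minimal brick component of degree $\deg_K(\eta+\rho)\ge p^2$ and all other brick components in $K$. As the brick components of $\rho\in M_0$ have degree $\le p^2$, and $\le p<\deg_K(\eta+\rho)$ if they are not the minimal one of $\eta+\rho$, the hypothesis $\dep(\eta+\rho)=1$ forces $\tr_{-1/p}(\rho)$ to lie in $K$, except for the harmless case where it is a single degree-$p^2$ brick plus an element of $K$; in particular the part of $\rho$ below $-1/p$ carries no limit of order $\ge 2$.

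Next I would extract the cancellation and finish. By (\ref{brickEta}), (\ref{etaJn}), $\eta$ has the degree-$p$ brick components $n\be^{1/p^n}$ on the intervals $J_n=[-1/p^n,-1/p^{n+1})$, with limit $-1/p^{n+1}$ which, for all large $n$, exceeds $\min\lm(\eta+\rho)$; being non-minimal brick components of $\eta+\rho$, they must be absorbed into $K$, so $\tr_{J_n}(\eta+\rho)\in K$, i.e.\ $\tr_{J_n}(\rho)\in -n\be^{1/p^n}+K$, for $n$ large. Together with the previous paragraph this gives $\lm^2(\rho)=\{0\}$, so Lemma~\ref{Mord2} yields $n_0$ and $(\ka,\la,\mu)\in k^3\setminus\{(0,0,0)\}$ with $\tr_{J_n}(\rho)\in\ka\be^{1/p^n}+(2n\la+\mu)y^{-1/p^{n+1}}\t_n+K$ for $n\ge n_0$. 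Comparing the two expressions for $\tr_{J_n}(\rho)$ gives $(n+\ka)\be^{1/p^n}+(2n\la+\mu)y^{-1/p^{n+1}}\t_n\in K$ for all large $n$; a short computation using $\be^{1/p^n}=y^{-2/p^{n+1}}+2\al^{1/p^{n+1}}y^{-1/p^{n+1}}$ and $y^{-1/p^{n+1}}\t_n=ny^{-2/p^{n+1}}+y^{-1/p^{n+1}}\t_{n+1}$, followed by multiplication by $y^{1/p^{n+1}}\in K$, reduces this to $2(n+\ka)\al^{1/p^{n+1}}+(2n\la+\mu)\t_{n+1}\in K$. Since $\al^{1/p^{n+1}}\in L$ (a finite separable extension of the perfect field $K$ is perfect), $\t_{n+1}\in\t+K$ with $\t\notin L$, and $2\ne0$ in $k$, this forces $2n\la+\mu=0$, hence $\la=\mu=0$, and then $n+\ka=0$ in $k$, for every large $n$ --- absurd. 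Hence $\dep(\eta+\rho)>1$.

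The step I expect to be the main obstacle is the passage from ``$\dep(\eta+\rho)=1$'' to its concrete consequences: the structure of $\tr_{-1/p}(\rho)$, the identity $\lm^2(\rho)=\{0\}$, and $\tr_{J_n}(\eta+\rho)\in K$ for large $n$. This is the same brick-component bookkeeping on the interval $[\min\lm(\eta+\rho),0)$ that Lemma~\ref{j=1} carries out over $L$, and it transfers to $M_0$ essentially verbatim; the only genuinely new features are that a degree-$p^2$ minimal brick component must be allowed for (it carries no second-order limit, so it does no harm) and that Lemma~\ref{Mord2} has a two-term conclusion, dispatched by the $k$-linear independence of $\be^{1/p^n}$ and $y^{-1/p^{n+1}}\t_n$ modulo $K$ exploited above.
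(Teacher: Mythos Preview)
Your overall plan and your final contradiction match the paper's proof: reduce to the situation where $\tr_{J_n}(\eta+\rho)\in K$ for all large $n$ with $\lm^2(\rho)=\{0\}$, invoke Lemma~\ref{Mord2}, and derive a contradiction. Your endgame is in fact more explicit than the paper's---you unpack $\be^{1/p^n}$ and $y^{-1/p^{n+1}}\t_n$ and separate the $\al$-part from the $\t$-part, whereas the paper simply observes $\be\in L\setminus K$ and $\t_n\in M_0\setminus L$ and reads off $n+\ka=0$, $2n\la+\mu=0$ directly from (\ref{ecs}). Both routes are fine.

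The problem is the reduction. Your assertion that a depth-one element has ``all other brick components in $K$'' is false: since $K$ has no finite limits, \emph{no} brick component with a finite limit can lie in $K$ (a brick has infinite support with a single finite limit). For instance $(1+y)\al$ has depth one, minimal brick $\al$, and second brick $y\al\notin K$. So you cannot deduce $\tr_{J_n}(\eta+\rho)\in K$ from ``non-minimal bricks lie in $K$''. The correct mechanism, implicit in the paper's ``arguing as in Lemma~\ref{j=1}'', is this: for every $\dta<0$ the truncation $\tr_\dta(\eta)$ is a \emph{finite} $L$-combination of the $\be^{1/p^n}$ (hence lies in $L$), so $\tr_\dta(\eta+\rho)\in M_0$ by Lemma~\ref{trnM0}. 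If some $\tr_{-1/p^{n+1}}(\eta+\rho)\notin K$ then $d_1(\eta+\rho)\le(-1/p^{n+1})^-$ while $d_m(\eta+\rho)\ge(-1/p^{n+1})^+$ for $m=\deg_K\tr_{-1/p^{n+1}}(\eta+\rho)\le p^2$, giving $\dep>1$ once $\deg_K(\eta+\rho)>p^2$; and if \emph{all} negative truncations of $\eta+\rho$ lie in $K$ then $\tr_0(\eta+\rho)=\eta+\tr_0(\rho)$ has finite support, forcing $\eta\in M_0$, absurd. This is also how one obtains the hypothesis $\lm^2(\rho)=\{0\}$ (no second-order limits of $\rho$ can survive in $\eta+\rho$ below $0$ without producing a truncation in $M_0\setminus K$). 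Your derivation of $\lm^2(\rho)=\{0\}$ from the false brick claim is therefore unsupported as written. Note also that the case $\deg_K(\eta+\rho)=p^2$ (possible when $\rho\in M_0$) needs a word: if $\rho\in L$ this is literally Lemma~\ref{j=1}; otherwise the minimal brick lies in $M_0$ and one must check it cannot generate $M_0$ while $\eta+\rho$ generates a different degree-$p^2$ subfield---the paper is silent here too, but you should not paper it over with an incorrect general principle.
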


\begin{proof}
	Arguing as in the proof of Lemma \ref{j=1}, it suffices to check that for any $\rho\in M_0$ with $\lm^2(\rho)=\{0\}$, it cannot exist any $n_0\in\N$ such that 
	\[
	\tr_{J_n}(\eta)+\tr_{J_n}(\rho)\in K\quad\mbox{ for all }\ n\ge n_0.
	\]
By Lemma \ref{Mord2}, there would exist $(\ka,\la,\mu)\in k^3$, $(\ka,\la,\mu)\ne(0,0,0)$ such that	
\begin{equation}\label{ecs}
n\be^{1/p^n}+\ka\,\be^{1/p^n}+(2n\la+\mu)\,y^{-1/p^{n+1}}\t_n\in K\quad\mbox{ for all }\ n\ge n_0.
\end{equation}
Since $\be\in L\setminus K$ and $\t_n\in M_0\setminus L$, this condition cannot hold unless 
\[
\ka+n=0 \ \mbox{ and }\ 2n\la+\mu=0 \quad\mbox{ for all }\ n\ge n_0,
\]
which is impossible because $(\ka,\la,\mu)\in k^3$ are constant.
\end{proof}

\begin{Lem}\label{CompsTrunct}
For all $\rho\in M_0$, $\dta\in\Ri$ and every exponent $0<j<p$, we have 
\[	
	\tr_\dta(\rho\eta^j)\in \tr_\dta(\rho)\eta^j+M_0\eta^{j-1}+\cdots+M_0.
\]	
\end{Lem}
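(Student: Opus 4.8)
The statement is the exact analogue of Lemma \ref{CompsTrunc}, with the field $L$ replaced by $M_0$ and the generator $\al$ (and its brick decomposition) replaced by $\t$. So the plan is to transcribe the proof of Lemma \ref{CompsTrunc} line by line, substituting the decompositions (\ref{theta}) and (\ref{tj}) of the powers of $\t$ for the decompositions (\ref{eta}), (\ref{etaj}) of the powers of $\eta$, and the decomposition (\ref{gammast}) for (\ref{gammas}). Concretely, I would first isolate the base case (the ``Claim''): for all $\ep<0$, we have $\tr_\ep(\eta^j)\in M_0\eta^{j-1}+\cdots+M_0$. For $j=1$ this follows from the brick decomposition (\ref{brickEta}) of $\eta$ together with Lemma \ref{trL}, exactly as before, since $\tr_\ep(\eta)\in L\subseteq M_0$. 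For $j=0$ the displayed inclusion is vacuous.

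**Key steps.** I would run the same double induction: assuming the Claim holds for $j$ and the inclusion $\tr_\dta(\rho\eta^j)\in\tr_\dta(\rho)\eta^j+M_0\eta^{j-1}+\cdots+M_0$ holds for $j-1$, deduce the inclusion for $j$ and the Claim for $j+1$. Writing $\rho=\sum_{i\in I}\la_iy^{q_i}$ with $\la_i\in k^*$, split at the minimal index $i_1$ with $q_{i_1}>\dta$; the part with $i<i_1$ contributes $\tr_\dta(\rho)\eta^j\in M_0\eta^j$ (using $\max(\lm(\eta^j))=0$, which is still valid), and for the part with $i\ge i_1$ apply the Claim for $j$ to each summand, use Lemma \ref{finiteTr} to see that only finitely many distinct truncations $\tr_{\dta-q_{i}}(\eta^j)$ occur, and reassemble $\zeta$ as a finite $M_0$-linear combination of them via Lemma \ref{trnM0} (the $M_0$-analogue of Lemma \ref{trL} already proved in this section). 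Then the Claim for $j+1$ follows from the decomposition (\ref{gammas}) of $\eta^{j+1}$: each $\ga_n$ has $\deta(\ga_n)=j$, only finitely many satisfy $\tr_\ep(\ga_n)\ne0$, and by the inclusion just established (for $j$) these truncations lie in $M_0\eta^{j}+\cdots+M_0$, hence in $M_0\eta^{j}+\cdots+M_0\subseteq M_0\eta^{(j+1)-1}+\cdots+M_0$.

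**Main obstacle.** There is no genuinely new difficulty; the only thing to check carefully is that every structural fact used about $\eta$ over $L$ in Lemma \ref{CompsTrunc} remains true with $M_0$ in place of $L$. The essential inputs are (i) $\max(\lm(\eta^j))=0$ for $0<j<p$, which is already recorded; (ii) $M_0$ is closed under truncation, which is Lemma \ref{trnM0}; and (iii) the decompositions (\ref{brickEta}) and (\ref{gammas}) of the powers of $\eta$, which are stated over $L\subseteq M_0$ and so are a fortiori decompositions over $M_0$. The one point deserving a word of care is that the coefficients appearing in the reassembly of $\zeta$ — namely $\tr_{[q_{i_\ell},q_{i_{\ell+1}})}(\rho)$ — lie in $M_0$; this is exactly Lemma \ref{trnM0}, whereas in Lemma \ref{CompsTrunc} it was Lemma \ref{trL}. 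With that substitution the argument goes through verbatim, so I would simply write ``The proof is obtained by mimicking that of Lemma \ref{CompsTrunc}, replacing $L$ with $M_0$, the decompositions (\ref{eta}) and (\ref{etaj}) with (\ref{theta}) and (\ref{tj}), and Lemma \ref{trL} with Lemma \ref{trnM0}.''
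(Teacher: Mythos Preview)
Your detailed argument in the ``Key steps'' paragraph is correct and is exactly the paper's approach: mimic Lemma~\ref{CompsTrunc} verbatim, replacing $L$ by $M_0$ and invoking Lemma~\ref{trnM0} in place of Lemma~\ref{trL}.

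One small correction: in your opening and in your suggested one-line proof you say to replace the decompositions (\ref{eta}), (\ref{etaj}), (\ref{gammas}) of the powers of $\eta$ by the decompositions (\ref{theta}), (\ref{tj}), (\ref{gammast}) of the powers of $\t$. That substitution is what happens in the proof of Lemma~\ref{trnM0}, not here. In the present lemma we are still truncating powers of $\eta$, so the decompositions (\ref{brickEta}) and (\ref{gammas}) are used unchanged (as you in fact do in your ``Key steps'' and ``Main obstacle'' paragraphs); the only replacements are $L\rightsquigarrow M_0$ and Lemma~\ref{trL}~$\rightsquigarrow$~Lemma~\ref{trnM0}. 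Drop that clause from your one-liner and it matches the paper's proof exactly.
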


\begin{proof}
	This follows by mimicking the arguments in the proof of Lemma \ref{CompsTrunc}. We must only replace everywhere $L$ with $M_0$ and appeal to Lemma \ref{trnM0} instead of Lemma \ref{trL}.
\end{proof}\e

In the same vein, the proof of Lemma \ref{dNK2} follows from mimicking the proof of Lemma \ref{theend}, just by replacing  everywhere $L$ with $M_0$.

\end{document}